\newtheorem{theorem}{Theorem}[section]
\newtheorem{corollary}{Corollary}
\newtheorem{lemma}[theorem]{Lemma}
\newtheorem{proposition}{Proposition}
\theoremstyle{definition}
\newtheorem{definition}[theorem]{Definition}
\newtheorem{remark}{Remark}
\title[Constrained control of semilinear heat equations] %Use the shortened version of the full title
      {controllability under positivity constraints of semilinear heat equations}
\author[Dario Pighin and Enrique Zuazua]{}
\subjclass{Primary: 35k58, 35k51; Secondary: 49J99.}
\keywords{Semilinear heat equations, controllability under constraints, iterative method, dissipativity, waiting time.}
 \email{dario.pighin@uam.es}
 \email{enrique.zuazua@uam.es}
\thanks{This work was partially supported by the Advanced Grant DYCON (Dynamic Control) of the European Research Council Executive Agency, FA9550-15-1-0027 of AFOSR, FA9550-14-1-0214 of the EOARD-AFOSR, the MTM2014-52347 and MTM2017 Grants of the MINECO (Spain) and ICON of the French ANR}
\begin{document}
\maketitle

% Enter the first author's name and address:
\centerline{\scshape Dario Pighin$^*$}
\medskip
{\footnotesize
% please put the address of the first author
 \centerline{Departamento de Matem\'aticas, Universidad Aut\'onoma de Madrid}
%   \centerline{Other lines}
   \centerline{28049 Madrid, Spain}
} % Do not forget to end the {\footnotesize by the sign }

\medskip

\centerline{\scshape Enrique Zuazua}
\medskip
{\footnotesize
	% please put the address of the first author
	\centerline{DeustoTech, Fundaci\'on Deusto}
	%   \centerline{Other lines}
	\centerline{Avda. Universidades,
		24, 48007, Bilbao, Basque Country, Spain}
} % Do not forget to end the {\footnotesize by the sign }
\medskip
{\footnotesize
	% please put the address of the first author
	\centerline{Departamento de Matem\'aticas, Universidad Aut\'onoma de Madrid}
	%   \centerline{Other lines}
	\centerline{28049 Madrid, Spain}
} % Do not forget to end the {\footnotesize by the sign }
\medskip
{\footnotesize
	% please put the address of the first author
	\centerline{Facultad de Ingenier\'ia, Universidad de Deusto}
	%   \centerline{Other lines}
	\centerline{Avda. Universidades,
		24, 48007, Bilbao, Basque Country, Spain}
} % Do not forget to end the {\footnotesize by the sign }
\medskip
{\footnotesize
	% please put the address of the first author
	\centerline{Laboratoire Jacques-Louis Lions, UPMC Univ. Paris 06,}
	%   \centerline{Other lines}
	\centerline{CNRS–UMR 7598, Sorbonne Universit\'es, F-75005, Paris, France}
} % Do not forget to end the {\footnotesize by the sign }

\bigskip

% The name of the associate editor will be entered by an editorial staff
% "Communicated by the associate editor name" is not needed for special issue.
% \centerline{(Communicated by the associate editor name)}[ADD]

%The abstract of your paper
\begin{abstract}
In many practical applications of control theory some constraints on the state and/or on the control need to be imposed. 

In this paper, we prove controllability results for semilinear parabolic equations under positivity constraints on the control, when the time horizon is long enough. As we shall see, in fact, the minimal controllability time turns out to be strictly positive. 

More precisely, we prove a global steady state constrained controllability result for a semilinear parabolic equation with $C^1$ nonlinearity, without  sign or globally Lipschitz assumptions on the nonlinear term.  Then, under suitable dissipativity assumptions on the system, we extend the result to any initial datum and any target trajectory.

We conclude with some numerical simulations that confirm the theoretical results that provide further information of the sparse structure of constrained controls in minimal time.

\end{abstract}
\medskip
{\center{\textit{Dedicated to Professor Jiongmin Yong on the occasion of his 60th birthday}}}
\medskip

%The title of your section 1
\section{Introduction}

Controllability of partial differential equations has been widely investigated during the past decades (see, for instance, the following articles and books and the references therein: \cite{zuazua2007controllability}, \cite{EFR}, \cite{HUM}, \cite{FRE}, \cite{lebeau1995controle}, \cite{ECP}, \cite{ICN} and \cite{CNL}).

On the other hand, in many models of heat conduction in thermal diffusion,  biology or population dynamics, some constraints on the control and/or on the state need to be imposed when facing practical applications (see, for instance, \cite{OPC} and \cite{OCC}).

In this paper we mainly focus on the controllability problem for semilinear heat equations under unilateral constraints. In other words, our aim is to analyse if the parabolic equation under consideration can be driven to a desired final target by means of the control action, but preserving some constraints on the control and/or the state. To fix ideas we focus on nonnegativity constraints.

As it is well known by now, a wide class of linear and semilinear parabolic systems, in the absence of constraints, is controllable in any positive time (see \cite{lebeau1995controle} and \cite{ECP}). And, often times, norm-optimal controls achieving the target at the final time are restrictions of solutions of the adjoint system. Accordingly these controls experience large oscillations in the proximity of the final time. In particular, when the time horizon is too short, these oscillations prevent the control to fulfill the positivity constraint.  

Therefore, the question of controlling the system by means of nonnegative controls requires further investigation. This question has been addressed in \cite{HCC} where, in the context of the linear heat equation, the constrained controllability in   large time was proved, showing also the existence of a minimal controllability or waiting time.

The purpose of the present paper is to extend the analysis in \cite{HCC}  to semilinear parabolic equations, considering the controllability problem  under positivity constraints on the boundary control. As a consequence, employing the comparison or maximum principle for parabolic equations, we deduce the  controllability under positivity constraints on the \textit{state} too.

In \cite{HCC} the constrained controllability property was proved using the dissipativity of the system, enabling to show the exponential decay of the observability constant. This allows to show that, in large time intervals, the controls can be chosen to be small, which in turn implies constrained controllability. In the present paper, inspired by \cite{GCT}, we show that dissipativity is not needed for steady state constrained controllability,  the aim being to control the system from one steady-state to another one, both belonging to the same connected component of the set of steady states. The method of proof, that uses a  ``stair-case argument'', does not require any dissipativity assumption and is merely based on the controllability of the system without constraints. It consists in moving from one steady state to a neighbouring one, using small amplitude controls, in a recursive manner, so to reach the final target after a number of iterations and preserving the constraints on the control imposed a priori. 

This iterative method, though, leads to constrained control results only when the time of control is long enough, and this time horizon increases when the distance between the initial and final steady states increases. On the other hand, the method cannot be applied to an arbitrary initial state.
In fact, we give an example  showing that, when the system is nondissipative, constrained controllability in large time may fail for general $L^2$-initial data. Achieving the constrained controllability result for general initial data and final target trajectories of the system requires to assume that the system to be  dissipative and the control time long enough. Summarising,  although dissipativity is not needed for steady state constrained controllability,  it is crucial when considering general initial data.

Once the control property has been achieved with nonnegative controls,  the classical comparison or maximum principle for parabolic equations allows proving that the same property holds under positivity constraints on the state.  

But all previous techniques and results require the control time to be long enough. It is then natural to analyse whether constrained controllability can be achieved in an arbitrarily small time. In \cite{HCC}  it was shown, for the linear heat equation, that constrained controllability does not hold when the time horizon is too short.  As we shall see, under some assumptions on the nonlinearity, the same occurs for semilinear parabolic equations so that, the minimal constrained controllability time, $T_{\mbox{\tiny{min}}}$, is necessarily strictly positive, showing a {\it waiting time phenomenon}. Finally, in \cite{HCC} it was also proved that, actually, constrained controllability of the heat equation holds in the minimal time with finite measures as controls. This result is also generalised in the present work.
\subsection{Statement of the main results}
Let $\Omega$ be a connected bounded open set of $\mathbb{R}^n$, $n \ge 1$, with $C^2$ boundary, and let us consider the boundary control system:
\begin{equation}\label{semilinear_boundary_1}
\begin{cases}
y_t-\mbox{div}(A \nabla y) +b\cdot\nabla y+f(t,x,y)=0\hspace{0.6 cm} & \mbox{in} \hspace{0.10 cm}(0,T)\times \Omega\\
y=u\mathbf{1}_{\Gamma}  & \mbox{on}\hspace{0.10 cm} (0,T)\times \partial \Omega\\
y(0,x)=y_0(x),  & \mbox{in} \hspace{0.10 cm}\Omega\\
\end{cases}
\end{equation}
where $y=y(t,x)$ is the state, while $u=u(t,x)$ is the control acting on a relatively open and non-empty subset $\Gamma$ of the boundary $\partial \Omega$.

Moreover, $A\in W^{1,\infty}(\mathbb{R}^+\times\Omega;\mathbb{R}^{n\times n})$ is a uniformly coercive symmetric matrix field, $b\in W^{1,\infty}(\mathbb{R}^+\times \Omega;\mathbb{R}^n)$ and the nonlinearity $f:\mathbb{R}^+\times \overline{\Omega}\times \mathbb{R}\longrightarrow \mathbb{R}$ is assumed to be of class  $C^1$. %for \cite[Theorem 3.1 page 120]{ICN}
	%		Moreover, by Whitney extension Theorem,
	%		https://en.wikipedia.org/wiki/Whitney_extension_theorem,
	%		there exists an extension
	%		$\widehat{f}\in C^1(\mathbb{R}^+\times\mathbb{R}^n)$.
	%		This implies that, for any $T>0$, $\widehat{f}$ is locally Lipschitz in $y$ uniformly in $(t,x)\in [0,T]\times \overline{\widehat{Omega}}$, namely for each compact $K\subset \mathbb{R}$, there exists $L_K\in \mathbb{R}^+$ such that for any $(t,x)\in [0,T]\times \overline{\widehat{\Omega}}$ and $(y_1,y_2)\in K^2$,
	%		\begin{equation*}
	%		\left|f(t,x,y_2)-f(t,x,y_1)\right|\leq L_K\left|y_2-y_1\right|;
	%		\end{equation*}
%	\item the boundary control $u\in L^{\infty}((0,T)\times\Gamma)$;
%	\item the initial datum $y_0 \in L^{\infty}(\Omega)$.
%Existence and Uniqueness

Since $f$ is not supposed to be globally Lipschitz, blow up phenomena in finite time may occur and the existence of solutions for bounded data can be guaranteed only locally in time. In fact, as it was shown in \cite{EFR}, the boundary controllability of semilinear parabolic equations can only be guaranteed for nonlinearities with a very mild superlinear growth. For the sake of completeness the well posedness of the above system is analyzed in the Appendix.

As mentioned above, in our analysis we distinguish the following two problems:  
\begin{itemize}
	\item \textit{Steady state controllability}: In this case the system is not required to be dissipative, and the coefficients of  \eqref{semilinear_boundary_1} are assumed  to be only space dependent;
	\item \textit{Controllability of general initial data to trajectories}: The dissipativity of the system is needed in this case.
\end{itemize}
The main  ingredients that our proofs require are as follows:
\begin{itemize}
	\item local controllability;
	\item a recursive ``stair-case'' argument to get \textit{global} steady state controllability;
	\item the dissipativity of the system to control general initial data to trajectories;
	\item the maximum or comparison principle to obtain a state constrained result.
\end{itemize}
We also prove the lack of constrained controllability when the control time horizon is too short. We do it employing different arguments:
\begin{itemize}
	\item for the linear case, we use the definition of solution by transposition, and we choose specific solutions of the adjoint system as test functions;
	\item the same proof, with slight variations, applies when the nonlinearity is globally Lipschitz too;
	\item if the nonlinearity is ``strongly'' superlinear and nondecreasing, inspired by \cite{henry1977etude} and \cite{DTA},  a barrier function argument can also be applied.
	\end{itemize}
	\begin{remark}
	The conclusions of the present work can be deduced employing the same arguments for the \textit{internal} control. In particular, the problem of local controllability of \eqref{semilinear_boundary_1} can be addressed by using the techniques of \cite{EFR}.
	\end{remark}

\subsubsection{Steady state controllability}
As announced, for steady state controllability, we do not assume the system to be dissipative but we ask the coefficients and the nonlinearity $f$ to be time-independent, namely $A=A(x)$, $b=b(x)$ and $f=f(x,y)$. This allows to easily employ and exploit the concept of steady state and their very properties.

More precisely, let us first introduce the set of bounded steady states for \eqref{semilinear_boundary_1}.
\begin{definition}\label{def_bounded_steady_state}
	Let $\overline{u}\in L^{\infty}(\Gamma)$ be a steady boundary control. A function $\overline{y}\in L^{\infty}(\Omega)$ is said to be a bounded steady state for \eqref{semilinear_boundary_1} if for any test function $\varphi \in C^{\infty}(\Omega)$ vanishing on $\partial \Omega$:
	\begin{equation*}
	\int_{\Omega}\left[-\mbox{div}(A\nabla \varphi)  -\mbox{div}(\varphi b)\right]\overline{y} dx+\int_{\Omega}f(x,\overline{y})\varphi dx+\int_{\Gamma}\overline{u}\frac{\partial\varphi}{\partial n} d\sigma(x)=0.
	\end{equation*}
In the above equation, $n=A \hat{v} /\|A \hat{v}\|$, where $\hat{v}$ is the outward unit normal to $\Gamma$.
	% It is possible to check that \forall \overline{y} bounded steady state \Rightarrow
	% \Rightarrow \overline{y} is a solution to\eqref{semilinear_boundary_1}
	% with initial datum \overline{y} and control \overline{u}.
	We denote by $\mathscr{S}$ the set of bounded steady states endowed with the $L^{\infty}$ distance.
\end{definition}
In our first result, the initial and final data of the constrained control problem are steady states joined by a continuous arc within $\mathscr{S}$. This arc of steady states is then a datum of the problem, allowing to build the controlled path in an iterative manner, by the stair-case argument.

The existence of steady-state solutions with non-homogeneous boundary values (the control) can be analysed reducing it to the case of null Dirichlet conditions, splitting $y=z+w$, where $z$ is the unique solution to the linear problem:
\begin{equation}\label{linear_boundary_elliptic_1}
\begin{cases}
-\mbox{div}(A \nabla z) +b\cdot\nabla z=0\hspace{2.8 cm} & \mbox{in} \hspace{0.10 cm}\Omega\\
z=u\mathbf{1}_{\Gamma}  & \mbox{on}\hspace{0.10 cm} \partial \Omega
\end{cases}
\end{equation}
and $w$ is a solution to:
\begin{equation*}\label{semilinear_forced_elliptic_1}
\begin{cases}
-\mbox{div}(A \nabla w) +b\cdot\nabla w+f(x,w+z)=0\hspace{0.6 cm} & \mbox{in} \hspace{0.10 cm}\Omega\\
w=0.  & \mbox{on}\hspace{0.1 cm} \partial \Omega
\end{cases}
\end{equation*}
The first problem \eqref{linear_boundary_elliptic_1} can be treated  by transposition techniques (see \cite{LM1}), employing Fredholm Theory (see \cite[Theorem 5.11 page 84]{EPG}). But the second one needs the application of  fixed point methods (see \cite[Part II]{EPG}).

%ATT: In \cite[Theorem 11.4 page 281]{EPG}, the a priori estimate of the solution of x=\sigma Tx is an hypothesis!!!
As mentioned above, we assume the initial datum $y_0$ and the final target $y_1$ to be \textit{path-connected} steady states, i.e. we suppose the existence of a continuous path:
\begin{equation*}
\gamma :[0,1]\longrightarrow \mathscr{S},
\end{equation*}
such that $\gamma(0)=y_0$ and $\gamma(1)=y_1$. Furthermore, we call $\overline{u}^s$ the boundary value of $\gamma(s)$ for each $s\in [0,1]$.

Now, we are ready to state the main result of this section, inspired by the methods in \cite[Theorem 1.2]{GCT}.
%WARNING: Do not confuse T_R with \overline{T}
\begin{theorem}[Steady state controllability]\label{th_1}
	Let $y_0$ and $y_1$ be path connected (in $\mathscr{S}$) bounded steady states. Assume there exists $\nu>0$ such that
	\begin{equation}\label{th_1_uniform_positiviveness_control}
	\overline{u}^s\geq \nu,	\quad\mbox{a.e. on}\ \Gamma
	\end{equation}
	for any $s\in [0,1]$. Then, if $T$ is large enough, there exists 
	$u\in L^{\infty}((0,T)\times\Gamma)$, a control such that:
	\begin{itemize}
		\item the problem \eqref{semilinear_boundary_1} with initial datum $y_0$ and control $u$ admits a unique solution $y$ verifying $y(T,\cdot)=y_1$;
		\item $
		u\geq 0$ a.e. on $(0,T)\times \Gamma.$
	\end{itemize}
\end{theorem}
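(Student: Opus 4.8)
The plan is to reduce the global constrained statement to a \emph{local} controllability result along the prescribed arc of steady states and then to iterate it finitely many times by a stair-case argument, the positivity of the control being inherited from the uniform lower bound \eqref{th_1_uniform_positiviveness_control}.

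First I would establish the following local statement: there exist $\tau>0$, $\delta>0$ and $C>0$, uniform along the compact arc $\gamma([0,1])\subset\mathscr{S}$, such that for every $s\in[0,1]$ and every $y_0\in L^{\infty}(\Omega)$ with $\|y_0-\gamma(s)\|_{L^{\infty}(\Omega)}<\delta$ there is a control $u$ on $(0,\tau)$ steering \eqref{semilinear_boundary_1} from $y_0$ exactly to $\gamma(s)$ at time $\tau$ and obeying
\begin{equation*}
\|u-\overline{u}^s\|_{L^{\infty}((0,\tau)\times\Gamma)}\leq C\,\|y_0-\gamma(s)\|_{L^{\infty}(\Omega)}.
\end{equation*}
To prove it I would work inside a fixed $L^{\infty}$-ball containing the arc and the admissible increments, and replace $f$ by a truncation $\tilde f$ that agrees with $f$ on this range and is globally Lipschitz in $y$; the a priori $L^{\infty}$-bound on the small controlled trajectories then ensures that one never leaves the region where $\tilde f=f$, so controlling the truncated equation controls the original one. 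Linearising around the time-independent trajectory $\gamma(s)$, the linearised operator carries the bounded potential $\partial_y f(x,\gamma(s))\in L^{\infty}(\Omega)$, for which the heat equation is null controllable from $\Gamma$ in any positive time by Carleman estimates; controllability to the trajectory for the truncated semilinear problem then follows as in \cite{EFR}. The quantitative bound is obtained by running this argument through a fixed point (or inverse mapping) scheme and tracking the dependence of the control on the datum, while local uniformity of $\tau,\delta,C$ in $s$ follows from the continuous dependence of the construction on $\gamma(s)$ together with a covering of the compact arc.

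Granting the local step, I would then run the stair-case. By uniform continuity of $\gamma$ on $[0,1]$ I pick a partition $0=s_0<s_1<\dots<s_N=1$ fine enough that $\|\gamma(s_i)-\gamma(s_{i-1})\|_{L^{\infty}(\Omega)}<\delta$ for every $i$. On the $i$-th time slab $((i-1)\tau,i\tau)$ I apply the local result to steer $\gamma(s_{i-1})$ to $\gamma(s_i)$ with a control $u_i$ satisfying $\|u_i-\overline{u}^{s_i}\|_{L^{\infty}}\leq C\delta$, and I concatenate the pieces to drive $y_0=\gamma(0)$ to $y_1=\gamma(1)$ in time $N\tau$. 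Choosing $\delta$ small enough (which refines the partition and so enlarges $N$) that $C\delta\leq\nu$, the lower bound \eqref{th_1_uniform_positiviveness_control} gives $u_i\geq\overline{u}^{s_i}-C\delta\geq\nu-C\delta\geq 0$ on every slab, so the assembled control is nonnegative. This fixes a control time $N\tau$; any larger horizon $T$ is reached by prepending a waiting phase in which the constant admissible control $\overline{u}^0\geq\nu\geq 0$ holds the system at $y_0$, which explains both the ``$T$ large enough'' hypothesis and the growth of the required time with the $L^{\infty}$-length of the arc.

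The delicate point is the local step, and precisely its \emph{quantitative, $L^{\infty}$} form: bare controllability to trajectories only yields existence of a control, whereas here its distance to the steady control $\overline{u}^s$ must be dominated by $\|y_0-\gamma(s)\|_{L^{\infty}(\Omega)}$, uniformly in $s$ along the arc. This forces one to revisit the construction of \cite{EFR} keeping explicit track of constants and to work throughout in $L^{\infty}$ rather than merely $L^{2}$, so that the sign of the assembled control can be read off pointwise. Verifying the a priori $L^{\infty}$-bound that legitimises the truncation, and checking that the finitely many concatenated pieces splice into a single admissible solution, are the accompanying technical burdens, which the shortness of the per-step time $\tau$ and the smallness of the increments are designed to absorb.
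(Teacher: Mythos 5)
Your proposal is correct and follows essentially the same route as the paper: a quantitative local $L^{\infty}$ controllability lemma (the paper's Lemma \ref{lemma2}, likewise proved by truncating $f$ to a globally Lipschitz nonlinearity and checking a posteriori that the trajectory stays in the truncation region), followed by the stair-case concatenation along the arc with positivity read off from $u_k=(u_k-\overline{u}^{s_k})+\overline{u}^{s_k}\geq -\nu+\nu=0$. The only cosmetic differences are that the paper gets uniformity along the arc by making the constants in Lemma \ref{lemma2} depend only on the $L^{\infty}$-bound $R$ rather than by a compactness covering, and it fixes the step time to $1$ so the total time is the number of steps, the waiting-phase padding you describe being implicit.
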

%It should be noticed that, in general, proving that that two bounded steady states are path connected is not an easy task.\\
It is implicit in the statement of this result that  the constructed trajectory does not blow up in $[0,T]$. The strategy of proof relies on keeping the trajectory  in a narrow tubular neighborhood of the  arc of steady states connecting the initial and the final ones. This result does not  contradict the lack of controllability for blowing up semilinear systems (see \cite[Theorem 1.1]{EFR}), since we work in the special case where the initial and final data are bounded steady states connected within the set of steady states.

\begin{remark}\label{remark2}
	Assume the target $y_1\in C^{\infty}(\overline{\Omega})$. If $T$ is large enough, by slightly changing our techniques, one can construct a $C^{\infty}$-smooth nonnegative control $u$ steering our system \eqref{semilinear_boundary_1} from $y_0$ to $y_1$ in time $T$.
\end{remark}

\subsubsection{Controllability of general initial data to trajectories}
In this case, both the coefficients and the nonlinearity $f$ can be time-dependent. We suppose the system to be dissipative, namely:
\begin{equation*}\label{dissipativity_assumptions_intro}
\mbox{(D)}\begin{cases}
\exists \hspace{0.10 cm}C_1\in\mathbb{R}^+\hspace{0.5 cm}\mbox{such that}\hspace{0.5 cm}f(t,x,y_2)-f(t,x,y_1)\geq -C_1(y_2-y_1)\\
\mbox{a.e.}\hspace{0.06 cm}(t,x)\in \mathbb{R}^+\times \Omega, \quad y_1\leq y_2,\\
\\
\int_{\Omega}\left(\nabla (y_2-y_1)\right)^TA\nabla (y_2-y_1) dx+\int_{\Omega}(b\cdot \nabla (y_2-y_1))\hspace{0.15 cm} (y_2-y_1)dx\\
+\int_{\Omega}(f(t,x,y_2)-f(t,x,y_1))(y_2-y_1) dx\geq \lambda \|y_2-y_1\|_{H^1_0(\Omega)}^2\\
\mbox{a.e.}\hspace{0.06 cm}t\in \mathbb{R}^+\hspace{0.2 cm}\mbox{and}\hspace{0.2 cm}\forall (y_1,y_2)\in H^1_0(\Omega)^2,\\
\end{cases}
\end{equation*}
for some $\lambda >0$. 

These additional assumptions guarantee the global existence of solution for $L^2$ data in any time $T>0$ (see \cite{LIO} and \cite{BNC}), i.e. for any $y_0\in L^2(\Omega)$ and control $u\in L^2((0,T)\times\Gamma)$ the system \eqref{semilinear_boundary_1} admits an unique solution:
\begin{equation*}
y\in L^2((0,T)\times \Omega)\cap C^0([0,T];H^{-1}(\Omega)).
\end{equation*}

As we shall see (proof of Theorem \ref{th2} and Lemma \ref{lemma6}), this $L^2$-dissipativity property and the smoothing effect of the heat equation will allow also to control distances between differences of trajectories in $L^{\infty}$. 

 In this context, we are able to extend Theorem \ref{th_1} to more general initial data and final targets.
\begin{theorem}[Controllability of general initial data to trajectories]\label{th2}
	Assume that the dissipativity assumption (D) holds. 
	
	Consider a target trajectory $\overline{y}$, solution to \eqref{semilinear_boundary_1} with initial datum $\overline{y}_0\in L^2$ and control $\overline{u}\in L^{\infty}$, verifying the positivity condition:
	\begin{equation}\label{positivity_condition_target_trajectory}
	\overline{u}\geq \nu,\quad \mbox{a.e. on}\ (0,T)\times \Gamma,
	\end{equation}
	with $\nu >0$. 
	
	Then, for any initial datum $y_0\in L^2(\Omega)$, in time large, we can find a control $u\in L^{\infty}((0,T)\times\Gamma)$ such that:
	\begin{itemize}
		\item the unique solution $y$ to \eqref{semilinear_boundary_1} with initial datum $y_0$ and control $u$ is such that $y(T,\cdot)=\overline{y}(T,\cdot)$;
		\item $u\geq 0$ a.e. on $(0,T)\times \Gamma$.
	\end{itemize}
\end{theorem}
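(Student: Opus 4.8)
The plan is to split the horizon $[0,T]$ into a long \emph{relaxation} phase $[0,T']$, during which we let the system dissipate towards the target trajectory by simply copying its control, followed by a short \emph{local correction} phase $[T',T]$, on which we kill the remaining (small) discrepancy by an unconstrained local controllability argument. The crucial bookkeeping is that on the first phase the control equals $\overline{u}\geq\nu>0$, while on the second phase we only superimpose a control of amplitude strictly below $\nu$, so that the total control stays nonnegative throughout.

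First I would ensure that $\overline{y}$, together with its control $\overline{u}\geq\nu$, is defined on a horizon as long as needed, which is legitimate since (D) guarantees global existence in $L^2$. Then, on $[0,T']$, I apply the control $u:=\overline{u}$ and call $y$ the resulting solution of \eqref{semilinear_boundary_1} issued from $y_0$. The difference $z:=y-\overline{y}$ belongs to $H^1_0(\Omega)$ for a.e.\ time (both states carry the same boundary data $\overline{u}\mathbf{1}_{\Gamma}$) and solves the corresponding homogeneous-boundary equation. Multiplying by $z$, integrating over $\Omega$, and invoking the dissipativity encoded in (D) together with Poincar\'e's inequality gives
\[
\frac{1}{2}\frac{d}{dt}\|z(t)\|_{L^2(\Omega)}^2\leq -\lambda\,\|z(t)\|_{H^1_0(\Omega)}^2\leq -c\,\|z(t)\|_{L^2(\Omega)}^2,
\]
so that $\|y(t)-\overline{y}(t)\|_{L^2(\Omega)}$ decays exponentially in $t$. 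Choosing the waiting length $T'$ large enough, I can therefore make $\|y(T'')-\overline{y}(T'')\|_{L^2(\Omega)}$ as small as I wish at some intermediate time $T''<T'$.

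Next I upgrade this $L^2$-smallness to $L^\infty$-smallness. Letting the two trajectories evolve (still under $\overline{u}$) over the short remaining portion of the relaxation phase, the regularizing effect of the parabolic equation converts the small $L^2$-gap into a small gap in a stronger topology; this is the content of Lemma~\ref{lemma6}, so that $\|y(T')-\overline{y}(T')\|_{L^\infty(\Omega)}$ can be made arbitrarily small. Finally, on the short interval $[T',T]$ I invoke the unconstrained local controllability of \eqref{semilinear_boundary_1} along the bounded trajectory $\overline{y}$, in the spirit of \cite{EFR}: writing again $z=y-\overline{y}$ and letting $v$ denote the added boundary control, the null controllability of the linearized system combined with a fixed point argument yields, whenever $\|z(T')\|$ is below a threshold, a control $v$ with cost $\|v\|_{L^\infty((T',T)\times\Gamma)}\leq C\,\|z(T')\|$ steering $z$ exactly to $0$, i.e.\ $y(T,\cdot)=\overline{y}(T,\cdot)$. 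Concatenating, the full control equals $\overline{u}$ on $[0,T']$ and $\overline{u}+v$ on $[T',T]$; since $\overline{u}\geq\nu$ and, by the previous steps, $\|v\|_{L^\infty}\leq\nu$, we conclude $u\geq 0$ a.e.\ on $(0,T)\times\Gamma$. Uniqueness of $y$ is guaranteed by the well-posedness under (D).

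The main obstacle I anticipate is the quantitative matching between the two phases. The dissipativity estimate is naturally an $L^2$ estimate, whereas the local controllability result requires smallness of the gap in a stronger norm and, decisively, a control cost depending \emph{linearly} on the size of that gap, so that the correction $v$ can be forced below $\nu$. Fitting the smoothing step of Lemma~\ref{lemma6} and the local controllability cost estimate together quantitatively is where the real work lies; verifying the absence of blow-up is here automatic, since on $[0,T]$ the constructed state remains in a neighbourhood of the bounded trajectory $\overline{y}$. The requisite size of $T$ in terms of $\nu$ and $\|y_0-\overline{y}_0\|_{L^2(\Omega)}$ emerges precisely from this balance.
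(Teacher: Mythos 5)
Your proposal is correct and follows essentially the same three-phase strategy as the paper's own proof: stabilization with $u=\overline{u}$ using the dissipativity assumption (D) to obtain exponential $L^2$ decay of $y-\overline{y}$, the $L^2$-to-$L^\infty$ parabolic smoothing of Lemma~\ref{lemma6}, and the local controllability of Lemma~\ref{lemma2}, whose linear cost estimate forces the correction term below $\nu$ for $T$ large, so that the concatenated control stays nonnegative. There are no substantive differences to report.
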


Remark that the dissipativity property (D) is actually needed to control a general initial datum to trajectories. Indeed, even in the linear case, removing dissipativity, constrained controllability may fail in \textit{any time} $T>0$. This is the object of Proposition \ref{prop2}.

\begin{remark}\label{remark3}
As we have seen in Remark \ref{remark2}, if the target $\overline{y}$ is smooth, we can build a nonnegative control $u$ smooth as well.
\end{remark}
\subsubsection{State Constraints}
We assume that $f(t,x,0)=0$ for any $(t,x)\in \mathbb{R}^+\times \Omega$ so that $y \equiv 0$ is a steady-state.

We also assume that the dissipativity assumption (D) holds, so that the system \eqref{semilinear_boundary_1} enjoys also the parabolic comparison or maximum principle (see \cite[Theorem 2.2 page 187]{MPD}). Then, the following state constrained controllability result is a consequence of the above one.

Under these conditions, in the framework of Theorem \ref{th2}, if the initial datum $y_0\geq 0$ a.e. in $\Omega$, and in view of the fact that the control has been built to be nonnegative, then $y\geq 0$ a.e. in $(0,T)\times \Omega$, i.e. the full state satisfies the nonnegativity unilateral constraint too.

Again, in case the target $\overline{y}$ is smooth, we can construct a smooth control $u$ as well, under state and control constrains.

\subsection{Orientation}
The rest of the paper is organized as follows:
\begin{itemize}
	\item Section 2: Proof of Theorem \ref{th_1} by the stair-case method;
	\item Section 3: Proof of Theorem \ref{th2} using the dissipativity property;
	\item Section 4: Counterexample for general initial data in the nondissipative case;
	\item Section 5: Positivity of the minimal time;
	\item Section 6: Numerical simulations and experiments;
	\item Section 7: Conclusions and open problems;
	\item Appendix: Proof of the well posedness and local null controllability for system \eqref{semilinear_boundary_1}.
\end{itemize}

\section{Steady state controllability-The stair case method}
In order to prove Theorem \ref{th_1}, we need the following two ingredients but we do not need/employ the dissipativity of the system:
\begin{enumerate}
	\item Local null controllability with controls in $L^{\infty}$;
	\item The stair-case method to get the desired global result.
\end{enumerate}

 First of all, let us state the local controllability result. For the sake of simplicity, depending on the context, we denote by $\|\cdot\|_{L^{\infty}}$ the  norm in $L^{\infty}(\Omega)$, $L^{\infty}((0,T)\times\Omega)$ or $L^{\infty}((0,T)\times\Gamma)$.
%ZUAZUA suggestion
%Let T and R >0. Then, there exist C and \delta depending on R, and T such that, for all reference initial data and solution  with norms less than R as in (9), amd for all other initial datum y_0 as in (9) the corresponding control satisfies (10).
\begin{lemma}\label{lemma2}
	Let $T>0$ and $R>0$. Then, there exist $C$ and $\delta$ depending on $R$, and $T$ such that, for all targets $\overline{y}\in L^{\infty}((0,T)\times\Omega)$ solutions to \eqref{semilinear_boundary_1} with initial datum $\overline{y}_0$ and control $\overline{u}$ and for each initial data $y_0\in L^{\infty}(\Omega)$ such that:
	\begin{equation}\label{smallness_condition}
	\left\|y_0\right\|_{L^{\infty}}\leq R,\quad\left\|\overline{y}\right\|_{L^{\infty}}\leq R\quad\mbox{and}\quad\left\|\overline{y}_0-y_0\right\|_{L^{\infty}}<\delta,
	\end{equation}
	we can find a control $u\in L^{\infty}((0,T)\times\Gamma)$ such that:
	\begin{itemize}
		\item the problem \eqref{semilinear_boundary_1} with initial datum $y_0$ and control $u$ admits a unique solution $y$ such that $y(T,\cdot)=\overline{y}(T,\cdot)$;
		\item the following estimate holds:
		% independent of $(y_0,y_1\overline{y}_0,\overline{u})$
		\begin{equation}\label{linf_estimate_control}
		\left\|u-\overline{u}\right\|_{L^{\infty}}\leq C\left\|\overline{y}_0-y_0\right\|_{L^{\infty}},
		\end{equation}
		where $\overline{u}$ is the control defining the target $\overline{y}$.
	\end{itemize}
\end{lemma}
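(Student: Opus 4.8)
The plan is to prove this *local* controllability result around a bounded target trajectory by a fixed-point / linearization argument. The key observation is that Lemma~\ref{lemma2} is a quantitative, $L^\infty$-refined version of the standard local null controllability of semilinear parabolic equations (as in \cite{EFR}), stated relative to a trajectory rather than to zero. So the natural strategy is: (i) reduce to a null-controllability problem for the difference, (ii) solve the linearized problem with a good control estimate, and (iii) close a fixed point for the nonlinear problem using a smallness hypothesis.

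**Step 1 (Reduction to the difference).** I would set $z := y - \overline{y}$ and $v := u - \overline{u}$. Subtracting the equations for $y$ and $\overline{y}$, the state $z$ solves a linear-in-$z$ parabolic equation with a potential coming from the $C^1$ nonlinearity: writing $f(t,x,y)-f(t,x,\overline{y}) = a(t,x)\,z$ with $a(t,x) = \int_0^1 f_y(t,x,\overline{y}+\theta z)\,d\theta$, the difference satisfies $z_t - \mathrm{div}(A\nabla z)+b\cdot\nabla z + a\,z = 0$ with boundary datum $v\mathbf 1_\Gamma$ and initial datum $z_0 = y_0 - \overline{y}_0$. The target condition $y(T,\cdot)=\overline{y}(T,\cdot)$ becomes the null-controllability requirement $z(T,\cdot)=0$, and the estimate \eqref{linf_estimate_control} becomes $\|v\|_{L^\infty}\le C\|z_0\|_{L^\infty}$. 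Because $\|y_0\|_{L^\infty},\|\overline{y}\|_{L^\infty}\le R$, the coefficient $a$ lives in an $L^\infty$ ball whose radius depends only on $R$ (using the continuity of $f_y$ on the compact set $[0,T]\times\overline\Omega\times[-2R,2R]$, say, once one knows $y$ stays bounded), so all constants will ultimately depend only on $R$ and $T$.

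**Step 2 (The linearized problem with $L^\infty$ control bound).** The heart of the matter is to null-control the linear equation with potential $a$ and to obtain the control in $L^\infty$ with the bound $C\|z_0\|_{L^\infty}$. The standard route is a Carleman-based observability inequality for the adjoint system, uniform over potentials $a$ in the fixed $L^\infty$-ball, giving a control in $L^2$; one then upgrades to $L^\infty$ via parabolic regularity and the smoothing of the equation, together with a suitable choice of control (e.g. via a penalized norm-optimal formulation or the Fursikov–Imanuvilov weights) that keeps the control bounded and vanishing near $t=T$. This is exactly the ingredient the introduction attributes to \cite{EFR}, so I would invoke that machinery, emphasizing only that the dependence of $C$ and the observability constant is uniform in $a$ over the ball of radius fixed by $R$.

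**Step 3 (Fixed point and the main obstacle).** To pass from the linearized to the genuinely nonlinear problem, I would run a fixed-point scheme: given a guess $z^\ast$ in a small $L^\infty$-ball, freeze the potential $a[z^\ast]=\int_0^1 f_y(t,x,\overline y+\theta z^\ast)\,d\theta$, solve the linear controllability problem from Step~2 to get $(z,v)$ with $\|v\|_{L^\infty}\le C\|z_0\|_{L^\infty}$, and show the map $z^\ast\mapsto z$ sends a small ball into itself and is contractive (or apply Schauder via the compactness of the parabolic solution operator). \textbf{The main obstacle} is precisely the control of the $L^\infty$ norm throughout: one must guarantee that $y=\overline y+z$ stays in, say, the ball of radius $2R$ so that the linearization constants genuinely depend only on $R$, and that the fixed-point ball is stable. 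This is what forces the smallness threshold $\delta$: choosing $\|z_0\|_{L^\infty}<\delta$ small enough (depending on $R$, $T$ and the uniform constant $C$) keeps $\|v\|_{L^\infty}$ and hence $\|z\|_{L^\infty}$ small, so $y$ never leaves the bounded region and the scheme closes. Existence and uniqueness of the controlled trajectory $y$ then follow from the well-posedness recorded in the Appendix, and the estimate \eqref{linf_estimate_control} is inherited directly from Step~2.
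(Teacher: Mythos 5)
Your plan follows the classical linearization--plus--fixed-point route of \cite{EFR}, and it is workable, but it is genuinely different from the paper's argument, which contains no fixed point at the level of this lemma. The paper proceeds by \emph{truncation}: with $M=2R$ and a cut-off $\zeta$, it replaces $f$ by $f_L(t,x,y)=f(t,x,\zeta(y)y)$, which is globally Lipschitz, and applies Lemma \ref{lemma1} --- a \emph{global} controllability-to-trajectories result for globally Lipschitz nonlinearities, proved by letting the system evolve freely for a short time to regularize the initial difference, extending the domain to some $\widehat{\Omega}\supset\Omega$, invoking the interior controllability theorem of \cite{ICN} from a ball $\omega$ with $\overline{\omega}\subset\widehat{\Omega}\setminus\overline{\Omega}$, and taking the trace of the controlled solution on $\Gamma$ as the boundary control. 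This yields $u$ with $\left\|u-\overline{u}\right\|_{L^{\infty}}\leq C\left\|y_0-\overline{y}_0\right\|_{L^{\infty}}$ for the \emph{truncated} system; then, choosing $\delta$ small and using the uniform well-posedness of Proposition \ref{prop0} on a short horizon $T<T_R$ (short horizons suffice because, once the target trajectory is reached, one can follow it with its own control $\overline{u}$), the controlled state satisfies $\left\|y-\overline{y}\right\|_{L^{\infty}}\leq C_R\left\|y_0-\overline{y}_0\right\|_{L^{\infty}}\leq R$, hence $\left\|y\right\|_{L^{\infty}}\leq 2R=M$, so the truncation never acts and $y=y_L$. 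What the paper's route buys is that all Carleman/fixed-point machinery is confined to a citable result for the Lipschitz case, the non-Lipschitz character of $f$ being handled by an elementary a posteriori consistency check; what your route would buy is a self-contained argument avoiding truncation. However, the two steps you gloss over are exactly where the work lies: (i) producing a \emph{boundary} control in $L^{\infty}$ with the bound $C\left\|z_0\right\|_{L^{\infty}}$ is not a routine ``upgrade by parabolic regularity'' --- the paper obtains it precisely through the extension--restriction construction described above, preceded by the smoothing phase with null control; and (ii) a contraction is not available in any obvious way, since the null control does not depend Lipschitz-continuously on the frozen potential, so your scheme must rely on Schauder or Kakutani, which requires a continuity/compactness argument for the control map that your sketch does not supply. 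Neither point is fatal, but both would need to be carried out in full for your proof to be complete.
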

The proof of this lemma is presented in the Appendix.

 We accomplish now Task 2, developing the stepwise iterative procedure, which enables us to employ the local controllability property to get the desired global result (see Figure \ref{Stepwise procedure}).
	\begin{figure}[htp]
		\begin{center}
			% Requires \usepackage{graphicx}
			% replace aims_logo.eps by your figure file name
			\includegraphics[width=10cm]{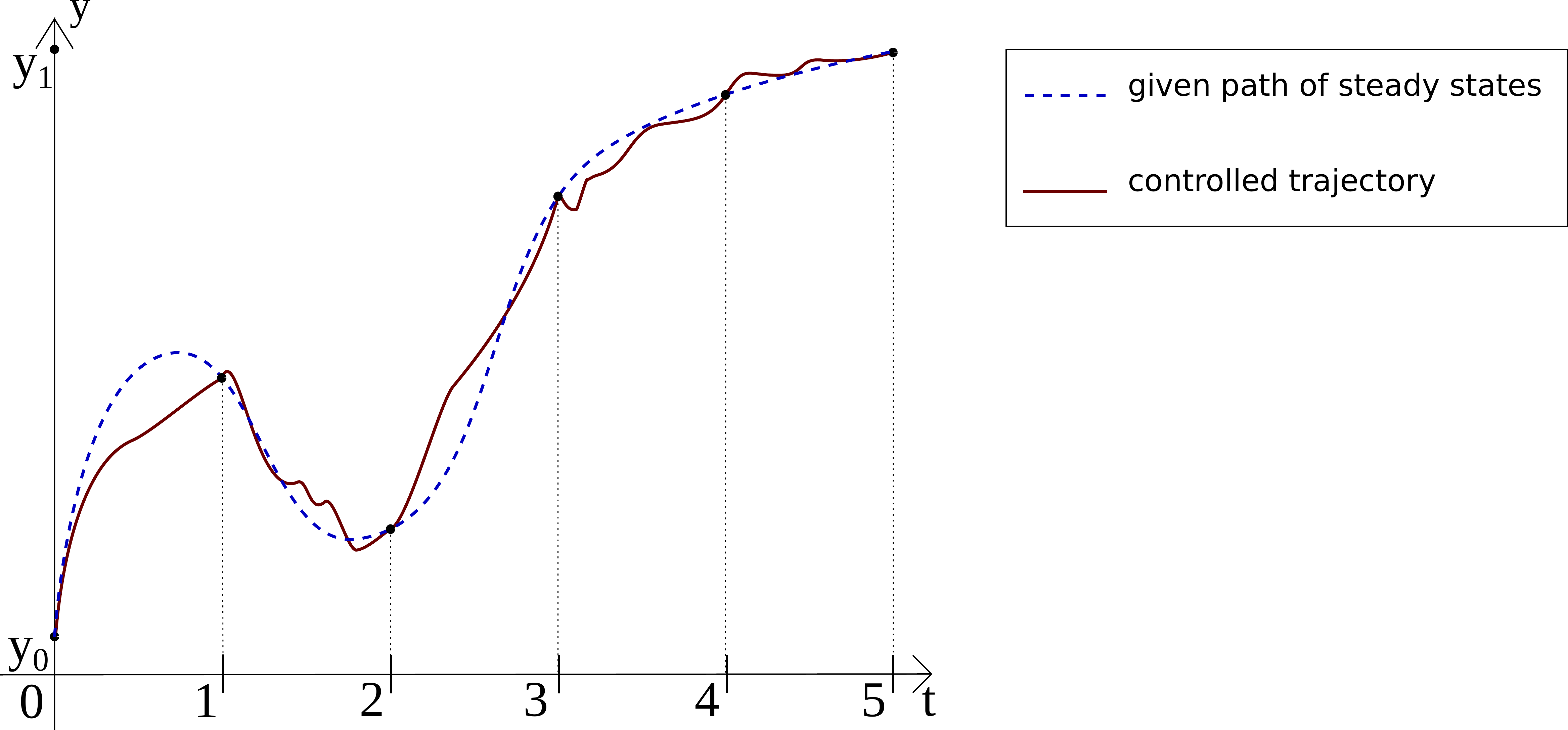}\\
			\caption{Stepwise procedure}\label{Stepwise procedure}
		\end{center}
	\end{figure}
\begin{proof}[Proof of Theorem \ref{th_1}]

	\textit{Step 1} \ \textbf{Consequences of the null-controllability property.}
	
	First of all, we take $T=1$  as time horizon. Let $R=\sup_{s\in [0,1]}\|\gamma(s)\|_{L^{\infty}}$. By Lemma \ref{lemma2}, for any $\varepsilon >0$, there exists $\delta_{\varepsilon} >0$ such that for any pair of bounded steady states $y_0$ and $y_1$ lying on the arc $\gamma$ such that:
	\begin{equation}\label{th1_smallness_condition}
	\|y_0\|_{L^{\infty}}\leq R,\quad\|y_1\|_{L^{\infty}}\leq R\quad\mbox{and}\quad\|y_1-y_0\|_{L^{\infty}	}<\delta_{\varepsilon}
	\end{equation}
	we can find a control $u\in L^{\infty}$ steering \eqref{semilinear_boundary_1} from $y_0$ to $y_1$ and such that:
	\begin{equation}\label{th1_linf_est_control_eps}
	\|u-\overline{u}\|_{L^{\infty}((0,1)\times\Gamma)}<\varepsilon,
	\end{equation}
	where $\overline{u}$ is the boundary value of $y_1$.\\
	\textit{Step 2} \ \textbf{Stepwise procedure and conclusion.}\\
	The initial datum $y_0$ and the final target $y_1$ to be controlled along a solution of the system, by hypothesis, are linked by a continuous arc $\gamma$. Then, let:
	\begin{equation*}
	z_k=\gamma\left(\frac{k}{\overline{n}}\right),\quad k=0,\dots,\overline{n}
	\end{equation*}
	be a finite sequence of bounded steady states. Let $\overline{u}_k$ be the boundary value of $z_k$. Right now, $\|z_k\|_{L^{\infty}}\leq R$. Moreover, by taking $\overline{n}$ sufficiently large,
	\begin{equation}\label{th1_eq3}
	\|z_{k}-z_{k-1}\|_{L^{\infty}(\Omega)}<\delta_{\nu},
	\end{equation}
	where $\delta_{\nu}$ is given by the smallness condition \eqref{th1_smallness_condition} with $\varepsilon =\nu$. Then, for any $1\leq k\leq \overline{n}$, we can find a control $u_k$ joining the steady states $z_{k-1}$ and $z_{k}$ in time $1$. Furthermore,
	\begin{equation}\label{th1_positivenss_contr_onestep}
	u_k=u_k-\overline{u}_{k}+\overline{u}_{k}\geq -\nu+\nu=0,\quad\mbox{a.e. on}\ (0,1)\times \Gamma.
	\end{equation}
	Finally, the control
$
	u:(0,\overline{n})\longrightarrow L^{\infty}(\Gamma)
$
	defined as $u(t)=u_k(t-k)$ for $t\in (k-1,k)$ is the desired one. This concludes the proof.
\end{proof}
By  the implemented stepwise procedure,  the time of control needed coincides with the number of steps we do. This is of course specific to the particular construction of the control we employ and this does not exclude the possibility of finding another nonnegative control driving \eqref{semilinear_boundary_1} from $y_0$ to $y_1$ in  a  smaller time. Anyhow, the existence of a time, long enough, for control, allows defining the minimal constrained controllability time and, as we shall see in Theorem \ref{th4}, under some conditions on the nonlinearity, this minimal time   is positive, which exhibits a \textit{waiting time} phenomenon for constrained control, as previously established in the linear case in \cite{HCC}.

\begin{remark}\label{remark_neumann}
	The stair-case method developed above can be employed to get an analogous state constrained controllability for the Neumann case as in \cite[Theorem 4.1]{HCC}.\\
	Note however that in the Neumann case state constraints and controls constraints are not interlinked by the maximum principle.
\end{remark}

\section{Control of general initial data to trajectories for dissipative systems}
As anticipated, in this case we assume that the system satisfies the dissipativity property (D). Then, for any $y_0\in L^2(\Omega)$ and control $u\in L^2((0,T)\times \Gamma)$ the system \eqref{semilinear_boundary_1} admits an unique solution $
y\in L^2((0,T)\times \Omega)\cap C^0([0,T];H^{-1}(\Omega))$ (see \cite{LIO} and \cite{BNC}).
%For the linear part with nonzero boundary conditions,
%see \cite[page 202]{LIO}

The proof of Theorem \ref{th2} will need  the following regularizing property.
\begin{lemma}\label{lemma6}
	Assume that the dissipativity property (D) holds. Let $y_0\in L^2(\Omega)$ be an initial datum and
	$u\in L^{\infty}((0,T)\times\Gamma)$ be a control. Then, the unique solution $y$ to \eqref{semilinear_boundary_1} with initial datum $y_0$ and control $u$ is such that $y(t,\cdot)\in L^{\infty}$ for any $t\in (0,T]$. Furthermore, there exists a constant $C=C(\Omega,A,b,f)>0$ such that, for any $t$ in $(0,T]$:
	\begin{equation}\label{l_inf_reg_sol}
	\|y(t,\cdot)\|_{L^{\infty}}\leq C e^{CT}{t^{-\frac{n}{4}}}\left[\|y_0\|_{L^{2}}+\|u\|_{L^{\infty}}+\|f(\cdot,\cdot,0)\|_{L^{\infty}}\right].
	\end{equation}
	 % and independent of $y_0$, $u$ and $f$.
%	The constant $C(t)\to +\infty$ as $t \to 0^+$ and it depends only on $\Omega$, $A$, $b$, $C_1$, $|f(\cdot,\cdot,0)|_{L^{\infty}}$
%and $t$, where $C_1$ is the constant appearing in assumptions (D).
	%%Therefore, $C(t)$ is independent of the nonlinearities fulfilling f(t,x,y_2)-f(t,x,y_1)\geq -C_1(y_2-y_1).
	\end{lemma}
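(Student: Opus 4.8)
The plan is to peel off the boundary control by a linear lifting, reduce to a homogeneous Dirichlet problem with $L^{2}$ initial datum, derive an $L^{2}$ energy bound from the dissipativity, and finally bootstrap $L^{2}\to L^{\infty}$ by the parabolic smoothing effect; throughout, the non-globally-Lipschitz nonlinearity will be kept under control solely through the one-sided Lipschitz bound contained in (D).

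First I would introduce the solution $z$ of the \emph{linear} problem carrying the boundary data and a null initial datum,
\begin{equation*}
z_{t}-\mbox{div}(A\nabla z)+b\cdot\nabla z=0\ \mbox{in}\ (0,T)\times\Omega,\quad z=u\mathbf{1}_{\Gamma}\ \mbox{on}\ (0,T)\times\partial\Omega,\quad z(0,\cdot)=0.
\end{equation*}
Since this operator has no zeroth-order term, the weak parabolic maximum principle (comparison with the constants $\pm\|u\|_{L^{\infty}}$, which solve the equation) gives $\|z\|_{L^{\infty}((0,T)\times\Omega)}\le\|u\|_{L^{\infty}}$; using this \emph{time-dependent} lifting, rather than a static elliptic one, is what lets us avoid requiring any time regularity of $u$. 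The difference $w:=y-z$ then solves
\begin{equation*}
w_{t}-\mbox{div}(A\nabla w)+b\cdot\nabla w+f(t,x,z+w)=0,\quad w=0\ \mbox{on}\ (0,T)\times\partial\Omega,\quad w(0,\cdot)=y_{0},
\end{equation*}
and, because $\|y(t,\cdot)\|_{L^{\infty}}\le\|w(t,\cdot)\|_{L^{\infty}}+\|u\|_{L^{\infty}}$, it suffices to estimate $w$.

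Next I would run an $L^{2}$ energy estimate on $w$. Testing the $w$-equation with $w$ and splitting $f(t,x,z+w)=[f(t,x,z+w)-f(t,x,z)]+f(t,x,z)$, the one-sided Lipschitz half of (D) gives the pointwise bound $[f(t,x,z+w)-f(t,x,z)]\,w\ge-C_{1}w^{2}$; the uniform coercivity of $A$ controls the principal term, the drift $b\cdot\nabla w$ is absorbed by Young's inequality, and the remainder $f(t,x,z)$ is an $L^{\infty}$ source bounded by $\|f(\cdot,\cdot,0)\|_{L^{\infty}}$ plus a contribution in $\|u\|_{L^{\infty}}$ coming from the $C^{1}$ behaviour of $f$ on $\{|s|\le\|u\|_{L^{\infty}}\}$ (which enters the constant). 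Gronwall's inequality then yields
\begin{equation*}
\sup_{t\in[0,T]}\|w(t,\cdot)\|_{L^{2}}^{2}+\int_{0}^{T}\|\nabla w\|_{L^{2}}^{2}\,dt\le e^{CT}\Big[\|y_{0}\|_{L^{2}}^{2}+\|u\|_{L^{\infty}}^{2}+\|f(\cdot,\cdot,0)\|_{L^{\infty}}^{2}\Big].
\end{equation*}

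Finally I would upgrade this to the $L^{\infty}$ bound with the sharp time weight by a Moser iteration. Testing the $w$-equation with $|w|^{p-2}w$ and invoking the same monotonicity, the nonlinear contribution is again bounded below, $\int_{\Omega}[f(t,x,z+w)-f(t,x,z)]\,|w|^{p-2}w\ge-C_{1}\|w\|_{L^{p}}^{p}$, so it is harmlessly absorbed at every level, while the bounded source only produces lower-order $\|f(\cdot,\cdot,z)\|_{L^{\infty}}$-terms. Inserting time cut-offs that vanish at $t=0$ and iterating the resulting parabolic Sobolev inequalities over $p=2,\,2\kappa,\,2\kappa^{2},\dots\to\infty$ with $\kappa=\tfrac{n+2}{n}$ reproduces the smoothing of the underlying linear operator $L_{0}=-\mbox{div}(A\nabla\cdot)+b\cdot\nabla$, whose semigroup is ultracontractive with $\|e^{-tL_{0}}\|_{L^{2}\to L^{\infty}}\le Ct^{-n/4}e^{\omega t}$, and hence delivers the claimed $Ce^{CT}t^{-n/4}$ prefactor. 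I expect the genuine difficulty to be precisely this handling of the nonlinearity: since $f$ is not globally Lipschitz it cannot be treated as a bounded perturbation, and the decisive structural fact is that the monotonicity in (D) gives the nonlinear term the favourable sign in \emph{every} $L^{p}$ test, which is what closes the iteration; a secondary, purely technical, point is to justify the formal energy computations through the Galerkin approximation scheme underlying the existence theory of \cite{LIO} and \cite{BNC}.
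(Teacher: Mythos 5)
Your route (lift the boundary datum by the linear solution $z$, then run energy and Moser estimates on $w=y-z$) is genuinely different from the paper's, but it has a real gap at the point where you treat the source term $f(t,x,z)$. Since $f$ is only $C^1$ and \emph{not} globally Lipschitz, the quantity $\sup_{|s|\le\|u\|_{L^{\infty}}}\|f(\cdot,\cdot,s)\|_{L^{\infty}}$ cannot be bounded by $C\left[\|f(\cdot,\cdot,0)\|_{L^{\infty}}+\|u\|_{L^{\infty}}\right]$ with $C=C(\Omega,A,b,f)$: take $f(y)=y^{3}$, which satisfies (D), and note $\|f(\cdot,\cdot,z)\|_{L^{\infty}}\sim\|u\|_{L^{\infty}}^{3}$. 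So your argument yields an estimate whose constant depends on $\|u\|_{L^{\infty}}$ (you concede this when you say the $C^1$ behaviour of $f$ on $\{|s|\le\|u\|_{L^{\infty}}\}$ ``enters the constant''), i.e. a bound superlinear in the data, whereas \eqref{l_inf_reg_sol} asserts a bound \emph{linear} in $\|y_0\|_{L^{2}}+\|u\|_{L^{\infty}}+\|f(\cdot,\cdot,0)\|_{L^{\infty}}$ with $C$ independent of $u$. The one-sided inequality in (D) cannot rescue this term: it gives a lower bound on $f(\cdot,\cdot,z)-f(\cdot,\cdot,0)$ where $z\ge 0$ and an upper bound where $z\le 0$, but in $\int_{\Omega}f(t,x,z)\,w\,dx$ the factor $w$ has no sign, so a two-sided pointwise bound is indispensable and simply is not available.

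The paper's proof is arranged precisely so that $f$ is never evaluated anywhere except at $0$. One first shows, using only the one-sided condition in (D), that the solution $\psi$ of the \emph{linear} problem $\psi_t-\mbox{div}(A\nabla\psi)+b\cdot\nabla\psi-C_1\psi=|f(\cdot,\cdot,0)|$, with boundary datum $|u|$ and initial datum $|y_0|$, is nonnegative and is a supersolution of the semilinear equation, while $-\psi$ is a subsolution; the comparison principle then gives $|y|\le\psi$ pointwise. After that everything is linear: $\psi=\xi+\chi$, where $\xi$ (homogeneous Dirichlet data, $L^2$ initial datum, bounded source) obeys the Moser-type $L^{2}\to L^{\infty}$ smoothing that produces the $t^{-n/4}$ weight, and $\chi$ (boundary part) is controlled by $e^{CT}\|u\|_{L^{\infty}}$ via the maximum principle. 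This is exactly how the constant ends up depending on $f$ only through $C_1$ and $\|f(\cdot,\cdot,0)\|_{L^{\infty}}$. Your plan can be repaired by replacing the linear lift $z$ with this comparison step (equivalently: lift by the solution of the semilinear problem with zero initial datum, and bound that lift by the same supersolution argument); in either case it is the comparison principle, not the lifting plus direct estimation, that keeps the dependence on $u$ linear. Your pointwise inequality $\left[f(t,x,z+w)-f(t,x,z)\right]|w|^{p-2}w\ge -C_1|w|^{p}$ for the difference term is correct — it is the inhomogeneous term $f(t,x,z)$, not the difference term, that breaks your iteration.
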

		\begin{proof}
		
			\textit{Step 1} \ \textbf{Reduction to the linear case.}
			Let $\psi$ be the solution to:
			\begin{equation}\label{lemma6_eq2}
			\begin{cases}
			\psi_t-\mbox{div}(A \nabla \psi) +b\cdot\nabla \psi-C_1\psi=|f(\cdot, \cdot,0)|\hspace{0.6 cm} & \mbox{in} \hspace{0.10 cm}(0,T)\times \Omega\\
			\psi=|u|\mathbf{1}_{\Gamma}  & \mbox{on}\hspace{0.10 cm} (0,T)\times \partial \Omega\\
			\psi(0,x)=|y_0|,  & \mbox{in} \hspace{0.10 cm}\Omega\\
			\end{cases}
			\end{equation}
			where $C_1$ is the constant appearing in assumptions (D). Then, by a comparison argument, for each $t\in [0,T]$:
			%Which is proved by energy estimates,
			\begin{equation}\label{lemma6_eq3}
			|y(t,\cdot)|\leq \psi(t,\cdot), \quad \mbox{a.e. in} \ \Omega.
			\end{equation}
			\textit{Step 2} \ \textbf{Regularization effect in the linear case.}
			First of all, we split $\psi=\xi+\chi$, where $\xi$ solves:
			\begin{equation}\label{lemma6_eq4}
			\begin{cases}
			\xi_t-\mbox{div}(A \nabla \xi) +b\cdot\nabla \xi-C_1\xi=|f(\cdot, \cdot,0)|\hspace{0.6 cm} & \mbox{in} \hspace{0.10 cm}(0,T)\times \Omega\\
			\xi=0  & \mbox{on}\hspace{0.10 cm} (0,T)\times \partial \Omega\\
			\xi(0,x)=|y_0|  & \mbox{in} \hspace{0.10 cm}\Omega\\
			\end{cases}
			\end{equation}
			while $\chi$ satisfies:
			\begin{equation}\label{lemma6_eq5}
			\begin{cases}
			\chi_t-\mbox{div}(A \nabla \chi) +b\cdot\nabla \chi-C_1\chi=0\hspace{0.6 cm} & \mbox{in} \hspace{0.10 cm}(0,T)\times \Omega\\
			\chi=|u|\mathbf{1}_{\Gamma}  & \mbox{on}\hspace{0.10 cm} (0,T)\times \partial \Omega\\
			\chi(0,x)=0  & \mbox{in} \hspace{0.10 cm}\Omega.\\
			\end{cases}
			\end{equation}
By the maximum principle (see \cite{MPD}), for each $t\in [0,T]$, $\chi(t,\cdot)\in L^{\infty}(\Omega)$ and there exists a constant $C=C(\Omega,A,b,f)>0$ such that:
\begin{equation*}
\|\chi(t,\cdot)\|_{L^{\infty}}\leq e^{CT}\|u\|_{L^{\infty}}.
\end{equation*}
			
			On the other hand, \eqref{lemma6_eq4} enjoys a $L^2-L^{\infty}$ regularization effect, namely $\xi(t,\cdot)\in L^{\infty}$ for any $t$ in $(0,T]$. Moreover, there exists a constant $C=C(\Omega,A,b,f)>0$ such that, for any $t$ in $(0,T]$:
			\begin{equation*}
			\|\xi(t,\cdot)\|_{L^{\infty}}\leq C e^{CT}{t^{-\frac{n}{4}}}\|y_0\|_{L^2}.
			\end{equation*}
			This can be proved using Moser-type techniques (see, for instance, \cite[Theorem 1.7]{porretta2001local}, \cite{wu2006elliptic} or \cite{lieberman1996second}).
			
			This yields the conclusion for $\psi$. The comparison argument \eqref{lemma6_eq3} finishes the proof.
		\end{proof}
We prove now Theorem \ref{th2}, which is illustrated in Figure \ref{idea of the proof of Theorem th2}.
	\begin{figure}[htp]
		\begin{center}
			% Requires \usepackage{graphicx}
			% replace aims_logo.eps by your figure file name
			\includegraphics[width=8cm]{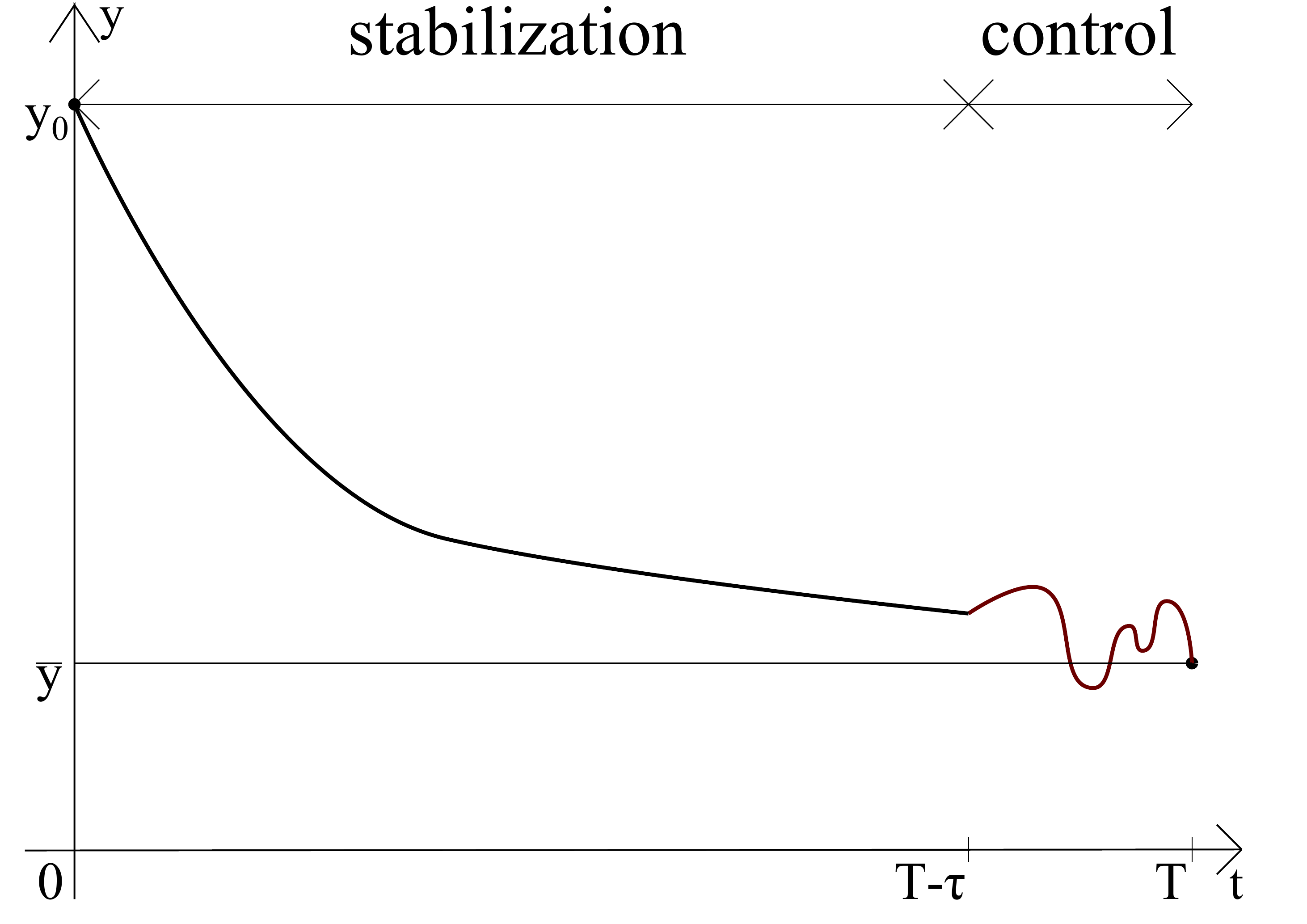}\\
			\caption{Illustration of the proof of Theorem \ref{th2} in two steps: Stabilization $+$ Control }\label{idea of the proof of Theorem th2}
		\end{center}
	\end{figure}
\begin{proof}[Proof of Theorem \ref{th2}]

	\textit{Step 1} \ \textbf{Stabilization.}
	Let $\tau >0$ be fixed and $T>2\tau$ be large enough. In the time interval $[0,T-\tau]$, we control $y$ by means of $u=\overline{u}$  so to stabilize $y$ towards $\overline{y}$ in norm $L^{\infty}$. By the dissipativity property (D), we immediately have the stabilization property in $L^2$ in $[0,T-2\tau]$, namely:
	\begin{equation}\label{th3_eq1}
	\|y(t,\cdot)-\overline{y}(t,\cdot)\|_{L^2(\Omega)}\leq Ce^{-\lambda t}\|y_0-\overline{y}_0\|_{L^2(\Omega)},\quad\forall \ t\in [0,T-2\tau].
	\end{equation}
Then, we realize that $\eta=y-\overline{y}$ satisfies:
\begin{equation}\label{semilinear_boundary_null_3}
\begin{cases}
\eta_t-\mbox{div}(A \nabla \eta) +b\cdot\nabla \eta+\tilde{f}(t,x,\eta)=0 & \mbox{in} \hspace{0.10 cm}(T-2\tau,T-\tau)\times \Omega\\
\eta=0  & \mbox{on}\hspace{0.10 cm} (T-2\tau,T-\tau)\times \partial \Omega\\
\eta(T-2\tau,x)=y(T-2\tau,\cdot)-\overline{y}(T-2\tau,\cdot),  & \mbox{in} \hspace{0.10 cm}\Omega\\
\end{cases}
\end{equation}
where $\tilde{f}(t,x,\eta)=f(t,x,\eta+\overline{y}(t,x))-f(t,x,\overline{y}(t,x))$. Since the nonlinearity $\tilde{f}$, together with the coefficients $A$ and $b$, fulfills the dissipative assumptions (D), we are in position to apply Lemma \ref{lemma6} to \eqref{semilinear_boundary_null_3} with nonlinearity $\tilde{f}$, getting:
\begin{equation*}
\|y(T-\tau,\cdot)-\overline{y}(T-\tau,\cdot)\|_{L^{\infty}(\Omega)}\leq C(\tau)\|y(T-2\tau,\cdot)-\overline{y}(T-2\tau,\cdot)\|_{L^2(\Omega)}
\end{equation*}
\begin{equation*}
\leq C(\tau)e^{-\lambda(T-2\tau)}\|y_0-\overline{y}_0\|_{L^2(\Omega)},
\end{equation*}
where in the last inequality we have used \eqref{th3_eq1}.\\
	\textit{Step 2} \ \textbf{Control.}
	We conclude with an application of Lemma \ref{lemma2}. 
	
	Let $\tilde{y}_0=y(T-\tau,\cdot)$ be the new initial datum and $\overline{y}\hspace{-0.1 cm}\restriction_{(T-\tau,T)\times\Omega}$ be the new target trajectory. By the above arguments, if $T$ is large enough, they fulfill \eqref{smallness_condition} with $R=\|\overline{y}\|_{L^{\infty}((T-\tau)\times\Omega)}+1$.
	
	%\textcolor{red}{Dario. Why do we have (6)? Why the norm of $\tilde{y}_0$ is smaller than $R$?}
	%I inserted the $R$ such that $\|\tilde{y}_0\|_{L^{\infty}}\leq R$.
	
	Then, there exists a control $w\in L^{\infty}((T-\tau,T)\times\Gamma)$ driving \eqref{semilinear_boundary_1} from $y(T-\tau,\cdot)$ to $\overline{y}(T,\cdot)$ in time $\tau$. Furthermore,
	\begin{equation*}
	\|w-\overline{u}\|_{L^{\infty}}\leq C\|y(T-\tau,\cdot)-\overline{y}(T-\tau,\cdot)\|_{L^{\infty}}\leq C(\tau) e^{-\lambda(T-2\tau)}\|y_0-\overline{y}_0\|_{L^{\infty}}.
	\end{equation*}
	Therefore, taking $T$ sufficiently large, we have $\|w-\overline{u}\|_{L^{\infty}}<\nu$.\\
	\textit{Step 3} \ \textbf{Conclusion.}
	Finally, the control:
	\begin{equation*}
	u=\begin{cases}
	\overline{u} \quad &\mbox{in} \ (0,T-\tau)\\
	w \quad &\mbox{in} \ (T-\tau,T)
	\end{cases}
	\end{equation*}
	is the desired one.

\end{proof}

\section{On the need of the dissipativity condition.}
We now give an example showing that the result above does not hold in any time $T>0$ without imposing the dissipativity condition and the initial datum is not a steady-state. 
 
 Consider the linear system:
\begin{equation}\label{linear_boundary_1}
\begin{cases}
y_t-\mbox{div}(A(x) \nabla y) +b(x)\cdot\nabla y+c(x)y=0\hspace{0.6 cm} & \mbox{in} \hspace{0.10 cm}(0,T)\times \Omega\\
y=u\mathbf{1}_{\Gamma}  & \mbox{on}\hspace{0.10 cm} (0,T)\times \partial \Omega\\
y(0,x)=y_0(x),  & \mbox{in} \hspace{0.10 cm}\Omega.\\
\end{cases}
\end{equation}
% and $c\in L^{\infty}(\Omega)$.
Let $\mathcal{L}:H^1_0(\Omega)\longrightarrow H^{-1}(\Omega)$ be the operator defined by:
\begin{equation*}
\mathcal{L}(y)=-\mbox{div}(A(x) \nabla y)+b(x)\cdot\nabla y +c(x)y.
\end{equation*}
According to \cite[Theorem 3 page 361]{PDE}, there exists a real eigenvalue $\lambda_1$  for $\mathcal{L}$ such that if $\lambda\in \mathbb{C}$ is any other eigenvalue, then $\mbox{Re}(\lambda)\geq \lambda_1$. Moreover, there exists a unique \textit{nonnegative} eigenfunction $\phi_1\in H^1_0(\Omega)$ such that $\|\phi_1\|_{L^2}=1$.  We suppose further that $\lambda_1<0$. By Fredholm Theory \cite[Theorem 5.11 page 84]{EPG},
% and \cite[Theorem 3 page 361]{PDE}
we can choose the coefficient $c$ so that $\lambda_1<0$ \textit{and} $\mathcal{L}$ is onto. For instance, one can consider the operator $\mathcal{L}(y)=-\Delta y-\lambda y$, with:
\begin{equation*}
\lambda>\mu_1 ,\quad \lambda\neq \mu_k \hspace{0.17 cm} \forall k\in\mathbb{N}^{*},
\end{equation*}
where $\left\{\mu_k\right\}$ is the set of eigenvalues of $-\Delta$.
\begin{proposition}\label{prop2}
	In the framework above, with initial datum $y_0 = \phi_1$ and a steady-state final target $y_1\in \mathscr{S}$ with boundary value
	\begin{equation}\label{prop2_uniform_positiviveness_control}
	\overline{u}\geq \nu>0,	\quad\mbox{a.e. on}\ \Gamma,
	\end{equation}
	the constrained controllability fails in any time $T>0$. 
	
	More precisely, for any time $T>0$ and nonnegative control
	$u\in L^{\infty}((0,T)\times\Gamma)$, the corresponding solution $y$ to \eqref{linear_boundary_1}  is such that $y(T,\cdot)\neq y_1$.
\end{proposition}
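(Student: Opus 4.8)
The plan is to argue by duality, testing \eqref{linear_boundary_1} against a separated-variables solution of the adjoint parabolic problem built from the principal eigenfunction of the adjoint operator. Introduce
\[
\mathcal{L}^{*}p=-\mbox{div}(A\nabla p)-\mbox{div}(bp)+cp .
\]
By the same principal-eigenvalue theory invoked for $\mathcal{L}$ (i.e. \cite[Theorem 3 page 361]{PDE}), the operator $\mathcal{L}^{*}$ shares the eigenvalue $\lambda_1<0$ and admits a nonnegative principal eigenfunction $\phi_1^{*}\in H^1_0(\Omega)$, with $\phi_1^{*}>0$ in $\Omega$ and $\mathcal{L}^{*}\phi_1^{*}=\lambda_1\phi_1^{*}$. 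Set $p(t,x)=e^{\lambda_1 t}\phi_1^{*}(x)$, a genuine solution of $p_t-\mathcal{L}^{*}p=0$ with $p=0$ on $(0,T)\times\partial\Omega$. Multiplying the equation for $y$ by $p$, integrating over $\Omega$ and integrating by parts twice, the interior contributions combine with $p_t=\mathcal{L}^{*}p$ and cancel, while $p=0$ on $\partial\Omega$ and $y=u\mathbf{1}_{\Gamma}$ leave only a boundary term:
\[
\frac{d}{dt}\int_{\Omega}y(t,x)p(t,x)\,dx=-\int_{\Gamma}u\,(A\nabla p\cdot\hat{v})\,d\sigma(x)=-e^{\lambda_1 t}\int_{\Gamma}u\,(A\nabla\phi_1^{*}\cdot\hat{v})\,d\sigma(x).
\]

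By Hopf's lemma the outward conormal derivative $A\nabla\phi_1^{*}\cdot\hat{v}$ is strictly negative on $\partial\Omega$; since $u\geq 0$ and $e^{\lambda_1 t}>0$, the right-hand side is nonnegative, so $t\mapsto\int_{\Omega}y(t)p(t)$ is nondecreasing. Evaluating at the endpoints and using $y_0=\phi_1$, $p(0,\cdot)=\phi_1^{*}$,
\[
\int_{\Omega}y(T,\cdot)p(T,\cdot)\,dx\ \geq\ \int_{\Omega}\phi_1\phi_1^{*}\,dx\ >\ 0,
\]
the strict positivity holding because $\phi_1,\phi_1^{*}\geq 0$ are both positive inside $\Omega$. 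Now I would suppose for contradiction that $y(T,\cdot)=y_1$, so that the left-hand side equals $e^{\lambda_1 T}\int_{\Omega}y_1\phi_1^{*}\,dx$. To compute this quantity, pair the steady equation $\mathcal{L}y_1=0$ (with $y_1=\overline{u}\mathbf{1}_{\Gamma}$ on $\partial\Omega$) against $\phi_1^{*}$ and integrate by parts exactly as above, which gives
\[
\lambda_1\int_{\Omega}y_1\phi_1^{*}\,dx=-\int_{\Gamma}\overline{u}\,(A\nabla\phi_1^{*}\cdot\hat{v})\,d\sigma(x).
\]
Since $\overline{u}\geq\nu>0$ and $A\nabla\phi_1^{*}\cdot\hat{v}<0$, the right-hand side is strictly positive, whence $\lambda_1\int_{\Omega}y_1\phi_1^{*}>0$; as $\lambda_1<0$ this forces $\int_{\Omega}y_1\phi_1^{*}<0$, so $e^{\lambda_1 T}\int_{\Omega}y_1\phi_1^{*}<0$. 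This contradicts the lower bound $\geq\int_{\Omega}\phi_1\phi_1^{*}>0$, and therefore $y(T,\cdot)\neq y_1$ for every $T>0$ and every nonnegative control. (Note that in the explicit example $\mathcal{L}=-\Delta-\lambda$ the operator is self-adjoint, so $\phi_1^{*}=\phi_1$ and the argument simplifies.)

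The conceptual heart of the proof is this sign bookkeeping: a nonnegative boundary control can only increase the $\phi_1^{*}$-moment of the state, yet the admissible steady targets with positive boundary trace have \emph{negative} $\phi_1^{*}$-moment because $\lambda_1<0$, so they are out of reach. I expect the main technical obstacle to be twofold. First, producing $\phi_1^{*}$ and justifying the strict sign of its conormal derivative via Hopf's lemma, which requires the elliptic regularity afforded by the $C^2$ boundary and the Lipschitz coefficients so that $A\nabla\phi_1^{*}\cdot\hat{v}$ is a well-defined, continuous, strictly negative function on $\partial\Omega$. Second, and more delicate, justifying the duality identity for solutions defined \emph{by transposition} with merely $L^{\infty}$ (or measure) controls, so that the boundary integral $\int_{\Gamma}u\,(A\nabla p\cdot\hat{v})$ is meaningful: this is precisely where the transposition definition of solution is used, the separated-variables $p$ serving as a legitimate test function, with an approximation argument filling in the regularity gap.
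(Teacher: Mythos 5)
Your proof is correct, but it takes a genuinely different route from the paper's. The paper argues by comparison: for $u\geq 0$ the parabolic maximum principle gives $y\geq z=e^{-\lambda_1 t}\phi_1$, the free solution, which grows exponentially because $\lambda_1<0$; hence $\|y(T,\cdot)\|_{L^2}\geq e^{-\lambda_1 T}>\|y_1\|_{L^2}$ for $T$ large, so controllability fails in large time, and the case of an arbitrary $T>0$ follows because $y_1$ is a steady state whose own control $\overline{u}\geq\nu$ is admissible: reaching $y_1$ at a small time would let one sit on it with the control $\overline{u}$ and hence reach it in large time too, a contradiction. Your argument is instead dual: testing against $p=e^{\lambda_1 t}\phi_1^*$ shows that nonnegative controls can only increase $t\mapsto\int_\Omega y\,p\,dx$, while the steady-state identity $\lambda_1\int_\Omega y_1\phi_1^*\,dx=-\int_\Gamma\overline{u}\,(A\nabla\phi_1^*\cdot\hat{v})\,d\sigma>0$ together with $\lambda_1<0$ shows every admissible target has strictly negative $\phi_1^*$-moment; this kills all $T>0$ at once, with no need of the prolongation-by-steady-state step, and it would apply to any target with negative $\phi_1^*$-moment, steady or not. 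The price is machinery the paper's proof avoids entirely, and to make your argument complete you should state three points explicitly: (i) the principal eigenvalue of $\mathcal{L}^*$ coincides with $\lambda_1$ and carries a positive eigenfunction $\phi_1^*\in W^{2,p}(\Omega)\cap W^{1,p}_0(\Omega)$ (the spectra of an operator and of its adjoint are conjugate and both principal eigenvalues are real, so they agree); (ii) Hopf's lemma for $\phi_1^*$ needs the usual reduction to a nonnegative zeroth-order coefficient, e.g. $(\mathcal{L}^*-\lambda_1+M)\phi_1^*=M\phi_1^*\geq 0$ with $M\geq\|c-\mbox{div}(b)-\lambda_1\|_{L^{\infty}}$, which is the same device the paper uses (in parabolic form) in Step 1 of the proof of Proposition \ref{prop3}; and (iii) the monotonicity computation must be run through the transposition identity rather than a pointwise derivative, which is legitimate precisely because $p$ solves the backward adjoint problem \eqref{adjoint_problem_1} with final datum $e^{\lambda_1 T}\phi_1^*\in W^{2,p}(\Omega)\cap W^{1,p}_0(\Omega)$, an admissible test function. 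With these made explicit, your proof stands as a self-contained alternative that is stronger in scope (no use of the steady-state structure of $y_1$ beyond its moment) though heavier in prerequisites than the paper's two-line comparison argument.
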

\begin{proof}
	Let $u\in L^{2}((0,T)\times\Gamma)$ be a nonnegative control and $y$ be the solution of \eqref{linear_boundary_1} with initial datum $\phi_1$ and control $u$. Moreover, let $z$ be the solution of the above system with initial datum $\phi_1$ and null control. By the maximum principle for parabolic equations (see \cite{MPD}), we have:
	\begin{equation*}
	y\geq z=e^{-\lambda_1 t}\phi_1,\quad\mbox{for a.e.}\ (t,x)\in (0,T)\times \Omega.
	\end{equation*}
	Hence, $\|y(T,\cdot)\|_{L^2}\geq e^{-\lambda_1 T} >\|y_1\|_{L^2(\Omega)}$ for $T$ large enough since $\lambda_1<0$. Hence, constrained controllability in time large fails. Actually, since the final target $y_1$ is a steady state, constrained controllability can never be realized whatever $T>0$ is.
\end{proof}

\section{Positivity of the minimal controllability time.}
First of all, we study the linear case to later address the semilinear one.
\subsection{Linear case}
We consider the linear system:
\begin{equation}\label{linear_boundary_2}
\begin{cases}
y_t-\mbox{div}(A \nabla y) +b\cdot\nabla y+cy=0\hspace{0.6 cm} & \mbox{in} \hspace{0.10 cm}(0,T)\times \Omega\\
y=u\mathbf{1}_{\Gamma}  & \mbox{on}\hspace{0.10 cm} (0,T)\times \partial \Omega\\
y(0,x)=y_0(x),  & \mbox{in} \hspace{0.10 cm}\Omega\\
\end{cases}
\end{equation}
where, as usual, $A=A(t,x)$ and $b=b(t,x)$ are Lipschitz continuous, while\\
$c=c(t,x)$ is bounded.

% \textcolor{red}{Dario: Normally $b=b(t,x)$ is not needed to be Lipschitz. This only occurs, as I told, in the absence of viscosity, for the first order hyperbolic problem where you need to define characteristics. When the Laplacian is present it is sufficient to to assume that it belongs to $L^\infty$, even to $L^n$, I think}
 
%c\in L^{\infty}((0,+\infty)\times\Omega).
We take a target trajectory $\overline{y}$ solution to \eqref{linear_boundary_2} with initial datum $\overline{y}_0\in L^2(\Omega)$ and control $\overline{u}\in L^{\infty}((0,T)\times\Gamma)$ such that $\overline{u}\geq \nu$, with $\nu$ positive constant and  an initial datum $y_0\in L^2(\Omega)$. 

We define the minimal controllability time (or waiting time) under positivity constraints:
\begin{equation}\label{def_min_time}
T_{\mbox{\tiny{min}}} \overset{{\tiny \mbox{def}}}{=} \inf\left\{T>0 \ \big| \ \exists u\in L^{\infty}((0,T)\times\Gamma)^+, \  y(T,\cdot)=\overline{y}(T,\cdot)\right\},
\end{equation}
where we use the convention $\inf(\varnothing)=+\infty$. 

Under the assumptions of Theorem \ref{th_1} or Theorem \ref{th2}, we know that constrained controllability holds in time large. This guarantees that this minimal time $T_{\mbox{\tiny{min}}}< +\infty$. 

On the other hand, when the system is not dissipative, we have shown that there exist initial data such that controllability fails in any time. In that case, the minimal time $T_{\mbox{\tiny{min}}}=+\infty$. 

The purpose of this section is to prove that, whenever the initial datum differs from the final target, constrained controllability fails in time too small, i.e. $T_{\mbox{\tiny{min}}}\in (0,+\infty]$.

This result is natural and rather simple to prove if we impose bilateral bounds on the control, i.e. $L^\infty$ bounds. Here, however, we prove it under the non-negativity constraint in which case the result is less obvious since, in principle we could expect, when the final target is larger than the initial datum, the minimal time to vanish, employing large positive controls. But this is not the case.

Before proving the positivity of the minimal time, we point out that, actually, the minimal time is independent of the $L^p$ regularity of the controls, as already pointed out in \cite[Proposition 2.1]{HCC}. The above system admits an unique solution $y\in C^0([0,T];(W^{2,p}(\Omega)\cap W^{1,p}_0(\Omega))')$, with $n+2<p<+\infty$,  for any $y_0\in L^2(\Omega)$ and $u\in L^1((0,T)\times\Gamma)$, as it can be shown by transposition (see \cite{LM1}). Thus the waiting time could also be defined with controls in $L^1((0,T)\times\Gamma)$. We have the following Lemma.
\begin{lemma}\label{lemma3}
	Let $T>0$. We suppose there exists a nonnegative control $u\in L^1((0,T)\times\Gamma)$ such that $y(T,\cdot)=\overline{y}(T,\cdot)$. Then, for any $\tau >0$, we can find a nonnegative control $\tilde{u}\in L^{\infty}((0,T+\tau)\times\Gamma)$ such that $y(T+\tau,\cdot)=\overline{y}(T+\tau,\cdot)$.
	
	Consequently,  the minimal constrained controllability time $T_{\mbox{\tiny{min}}}$ is independent of the $L^p$-regularity of controls.
	\end{lemma}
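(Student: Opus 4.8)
The plan is to upgrade the nonnegative $L^1$ control to a nonnegative $L^\infty$ one by regularizing it and then absorbing the resulting error using the extra time $\tau$ together with the strictly positive margin $\overline{u}\geq\nu$. Throughout I rely on the fact, recalled just before the statement, that for boundary data in $L^1$ the system \eqref{linear_boundary_2} is well posed by transposition, the solution map being continuous from $L^1((0,T)\times\Gamma)$ into $C^0([0,T];(W^{2,p}\cap W^{1,p}_0(\Omega))')$. I also assume $\overline u$, and hence the target trajectory $\overline y$, to be available on $(0,T+\tau)$, as is implicit in the very definition \eqref{def_min_time} of $T_{\mbox{\tiny{min}}}$.

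First I would mollify $u$ in both the time and the tangential space variables, using a nonnegative kernel and, if necessary, a cutoff keeping the support inside $\Gamma$; call the result $u_\varepsilon$. Because the kernel is nonnegative, $u_\varepsilon\geq 0$, and because $u\in L^1$ one gets $u_\varepsilon\in L^\infty$ with $u_\varepsilon\to u$ in $L^1((0,T)\times\Gamma)$ as $\varepsilon\to 0$. Let $y_\varepsilon$ be the solution of \eqref{linear_boundary_2} on $(0,T)$ driven by $u_\varepsilon$. By the continuity of the solution map, $y_\varepsilon(T,\cdot)\to\overline y(T,\cdot)$ in $(W^{2,p}\cap W^{1,p}_0(\Omega))'$, so the error $y_\varepsilon(T,\cdot)-\overline y(T,\cdot)$ is small in this low-regularity space for $\varepsilon$ small.

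The core of the argument is to convert this weak error into an $L^\infty$ error at the cost of a fraction of the extra time. On $(T,T+\tfrac{\tau}{2})$ I would keep the control equal to $\overline u$; then the difference $d=y_\varepsilon-\overline y$ solves the linear equation with homogeneous Dirichlet boundary condition and initial datum $d(T,\cdot)$ small in $(W^{2,p}\cap W^{1,p}_0(\Omega))'$, so the standard parabolic regularizing effect (in the spirit of the proof of Lemma \ref{lemma6}) gives $\|d(T+\tfrac{\tau}{2},\cdot)\|_{L^\infty}\to 0$ as $\varepsilon\to 0$. Hence, for $\varepsilon$ small enough, the state $y_\varepsilon(T+\tfrac{\tau}{2},\cdot)$ lies within the $L^\infty$-threshold $\delta$ of Lemma \ref{lemma2} around $\overline y(T+\tfrac{\tau}{2},\cdot)$. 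Applying Lemma \ref{lemma2} on $(T+\tfrac{\tau}{2},T+\tau)$ produces a control $\overline u+w$ steering $y_\varepsilon(T+\tfrac{\tau}{2},\cdot)$ exactly to $\overline y(T+\tau,\cdot)$, with $\|w\|_{L^\infty}\leq C\,\|d(T+\tfrac{\tau}{2},\cdot)\|_{L^\infty}<\nu$ for $\varepsilon$ small. Concatenating $u_\varepsilon$ on $(0,T)$, $\overline u$ on $(T,T+\tfrac{\tau}{2})$ and $\overline u+w$ on $(T+\tfrac{\tau}{2},T+\tau)$ yields the desired $\tilde u\in L^\infty$, which is nonnegative since $u_\varepsilon\geq 0$, $\overline u\geq\nu$, and $\overline u+w\geq\nu-\|w\|_{L^\infty}\geq 0$.

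The main obstacle is precisely this regularity gap: $L^1$ boundary data only yield convergence of the states in a negative-order space, whereas Lemma \ref{lemma2} demands $L^\infty$-closeness; bridging it via the intermediate smoothing window $(T,T+\tfrac{\tau}{2})$ while simultaneously preserving nonnegativity is the delicate point. A secondary technicality is keeping the mollified control supported in $\Gamma$, which I would handle by first replacing $u$ with an $L^1$ control supported in a slightly smaller relatively open subset $\Gamma'\Subset\Gamma$ and mollifying there. Finally, the independence of $T_{\mbox{\tiny{min}}}$ from the $L^p$-class is immediate: the inclusions $L^\infty\subset L^p\subset L^1$ give one chain of inequalities between the corresponding minimal times, and the construction above, letting $\tau\downarrow 0$, gives the reverse, so all of them coincide.
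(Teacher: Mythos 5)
Your proposal is correct and follows essentially the same strategy as the paper's proof: mollify the $L^1$ control into a nonnegative smooth one, use the window $(T,T+\tau/2)$ with control $\overline{u}$ to upgrade the resulting small error from the negative-order space $(W^{2,p}(\Omega)\cap W^{1,p}_0(\Omega))'$ by parabolic smoothing, then steer exactly to $\overline{y}(T+\tau,\cdot)$ on $(T+\tau/2,T+\tau)$ with a control at $L^\infty$-distance less than $\nu$ from $\overline{u}$, and concatenate. The only (immaterial) deviation is that the paper smooths the error only into $L^2$ and invokes Lemma \ref{lemma1} (the global $L^2$-based controllability result), whereas you smooth into $L^\infty$ and invoke Lemma \ref{lemma2}; both close the argument in the same way.
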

\begin{proof}
\textit{Step 1} \ \textbf{Regularization of the control.}
First of all, by convolution, we construct
%1) Def $u^{\varepsilon,1.1}=u\chi_{{(t,x)\in (0,T)\times\Omega | dist((t,x),\left\{0,1\right\}\times\partial\Omega)>\delta_1}};
%2) Def $u^{\varepsilon,1}=u_{\varepsilon,1.1}*\eta_{\delta_2}$.
%For any $\varepsilon >0$, there exists $\delta_1$ and $\delta_2$ such that:
%\|u-u^{\varepsilon,1.1}\|_{L^1((0,T);L^1(\Omega))}<\varepsilon.
a nonnegative regularized control $u^{\varepsilon,1}\in C^{\infty}([0,T]\times\partial{\Omega})$ such that:
\begin{equation*}
\|u-u^{\varepsilon,1}\|_{L^1((0,T)\times\Gamma)}<\varepsilon,
\end{equation*}
with $\varepsilon >0$ to be specified later. By the well posedness of \eqref{linear_boundary_2} with $L^1$ controls, we have that the unique solution $y^{\varepsilon,1}$ to \eqref{linear_boundary_2} with initial datum $y_0$ and control $u^{\varepsilon,1}$ is such that:
\begin{equation*}
\|y^{\varepsilon,1}(T,\cdot)-\overline{y}(T,\cdot)\|_{(W^{2,p}(\Omega)\cap W^{1,p}_0(\Omega))'}\leq C\varepsilon,
\end{equation*}
where $n+2<p<+\infty$. To conclude the proof, it remains to steer $y^{\varepsilon,1}(T,\cdot)$ to $\overline{y}(T+\tau,\cdot)$ by a small control.
%\textcolor{red}{Dario. 0 is not the target.}
To this extent, we need first to regularize the difference $y^{\varepsilon,1}(T,\cdot)-\overline{y}(T,\cdot)$.\\
\textit{Step 2} \ \textbf{Regularization of $y^{\varepsilon,1}(T,\cdot)-\overline{y}(T,\cdot)$.}
%We can define by transposition \cite{LM1} the solution of the \eqref{linear_boundary_2} with initial datum in H^{-s}(\Omega),
%by 
%\begin{equation*}
%\int_{0}^{T} \int_{\Omega}\left[-\varphi_t-\mbox{div}(A \nabla \varphi)-\mbox{div}(b\cdot \varphi) \right]y dx dt+\int_{0}^{T} \int_{\Omega}f(x,y)\varphi dx dt=
%\end{equation*}
%$$=\langle\varphi(\cdot,0)y_0 \rangle_{H^s_0(\Omega),H^{-s}(\Omega)}-\int_{0}^T\int_{\Gamma}u\frac{\partial\varphi}{\partial n} d\sigma(x) dt.$$
%Furthermore, by multiplying by a a cut-off function in time, we get the regularization effect H^{-2}\longrightarrow L^2.
Consider the unique solution $y^{\varepsilon,2}$ to:
\begin{equation}\label{linear_boundary_3}
\begin{cases}
y_t-\mbox{div}(A \nabla y) +b\cdot\nabla y+cy=0\hspace{0.6 cm} & \mbox{in} \hspace{0.10 cm}(T,T+\tau/2)\times \Omega\\
y=\overline{u}\mathbf{1}_{\Gamma}  & \mbox{on}\hspace{0.10 cm} (T,T+\tau/2)\times \partial \Omega\\
y(T,x)=y^{\varepsilon,1}(T,\cdot).  & \mbox{in} \hspace{0.10 cm}\Omega\\
\end{cases}
\end{equation}
By the regularizing effect of parabolic equations, $y^{\varepsilon,2}(T+\tau/2,\cdot)-\overline{y}(T+\tau/2,\cdot)\in L^2(\Omega)$ and:
%[CHECK]
\begin{equation*}
\|y^{\varepsilon,2}(T+\tau/2)-\overline{y}(T+\tau/2)\|_{L^2}\leq C(\tau)\|y^{\varepsilon,1}(T)-\overline{y}(T)\|_{(W^{2,p}(\Omega)\cap W^{1,p}_0(\Omega))'}\leq C(\tau)\varepsilon.
\end{equation*}
%by 
%\begin{equation*}
%\int_{0}^{T} \int_{\Omega}\left[-\varphi_t-\mbox{div}(A \nabla \varphi)-\mbox{div}(b\cdot \varphi) \right]y dx dt+\int_{0}^{T} \int_{\Omega}f(x,y)\varphi dx dt=
%\end{equation*}
%$$=\langle\varphi(\cdot,0)y_0 \ru/2,\cdot)-\overline{y}\|_{L^2(\Omega)}\leq C(\tau)\|y^{\varepsilon,1}(T,\cdot)\|_{H^{-2}}\leq C(\tau) \varepsilon
%\end{equation}
\textit{Step 3} \ \textbf{Application of null controllability by small controls.}
By Lemma \ref{lemma1}, there exists a control $u^{\varepsilon,3}\in L^{\infty}((T+\tau/2,T+\tau)\times\Gamma)$ steering \eqref{linear_boundary_2} from $y^{\varepsilon,2}(T+\tau/2,\cdot)$ to $\overline{y}(T+\tau,\cdot)$ such that:
\begin{equation}\label{lemma3_eq6}
\|u^{\varepsilon,3}-\overline{u}\|_{L^{\infty}}\leq C(\tau)\|y^{\varepsilon,2}(T+\tau/2,\cdot)-\overline{y}(T+\tau/2,\cdot)\|_{L^2}
\end{equation}
\begin{equation*}
\leq C(\tau)\|y^{\varepsilon,1}(T,\cdot)-\overline{y}(T,\cdot)\|_{(W^{2,p}(\Omega)\cap W^{1,p}_0(\Omega))'}\leq C(\tau)\varepsilon.
\end{equation*}
Then, choosing $\varepsilon$ sufficiently small, we have $\|u^{\varepsilon,3}-\overline{u}\|_{L^{\infty}}<\nu$, thus:
\begin{equation*}
u^{\varepsilon,3}\geq 0,\quad \mbox{a.e.} \ (T+\tau/2,T+\tau)\times\Gamma.
\end{equation*}
Then,
\begin{equation}\label{lemma3_eq7}\tilde{u}=
\begin{cases}
u^{\varepsilon,1}, \hspace{2.5 cm} &\mbox{in} \ (0,T)\\
\overline{u}  \hspace{2.5 cm} &\mbox{in} \ (T,T+\tau/2)\\
u^{\varepsilon,3} & \mbox{in} \ (T+\tau/2,T+\tau)\\
\end{cases}
\end{equation}
is the desired control

\end{proof}

We are now ready to state the desired Theorem on the waiting time, i.e. the positivity of $T_{\mbox{\tiny{min}}}$.
\begin{theorem}[Positivity of the minimal controllability time]\label{th3}

	Let $\overline{y}$ be the target trajectory solution to \eqref{linear_boundary_2} with initial datum $\overline{y}_0$ and boundary control $\overline{u}$ such that $\overline{u}\geq \nu>0$. Consider the initial datum $y_0\in L^2(\Omega)$ such that $y_0\neq \overline{y}_0$. 
	
	Then,
	\begin{enumerate}
	\item there exists $T_0>0$ such that, for any $T\in (0,T_0)$ and for any nonnegative control $u\in L^{\infty}((0,T)\times\Gamma)$ the solution $y$ to \eqref{linear_boundary_2} with initial datum $y_0$ and control $u$ is such that $y(T,\cdot)\neq \overline{y}(T,\cdot)$.
	\item Consequently,
	$$T_{\mbox{\tiny{min}}}>0.$$
		\end{enumerate}
	
\end{theorem}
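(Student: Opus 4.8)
The plan is to prove statement (1) — the existence of a minimal positive time $T_0$ below which the non-negative control cannot reach the target — since statement (2) is an immediate corollary ($T_{\mbox{\tiny{min}}}\geq T_0>0$). By Lemma \ref{lemma3} we may work with $L^\infty$ controls throughout, as the minimal time is independent of the $L^p$-regularity class.

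\textbf{Approach via transposition and a well-chosen test function.}
The core idea is to test the equation \eqref{linear_boundary_2} against a suitable solution of the adjoint (backward) parabolic problem, exploiting the solution-by-transposition formulation. Let $y$ solve \eqref{linear_boundary_2} with control $u\geq 0$ and $\overline{y}$ the target trajectory with control $\overline{u}\geq\nu$. The difference $w=y-\overline{y}$ solves the same linear equation with control $u-\overline{u}$ and initial datum $y_0-\overline{y}_0\neq 0$. First I would fix a final-time profile: choose the adjoint solution $p$ solving the backward problem
\begin{equation*}
\begin{cases}
-p_t-\mbox{div}(A\nabla p)-\mbox{div}(bp)+cp=0 & \mbox{in } (0,T)\times\Omega\\
p=0 & \mbox{on } (0,T)\times\partial\Omega\\
p(T,\cdot)=p_T & \mbox{in }\Omega,
\end{cases}
\end{equation*}
with a carefully selected $p_T$. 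The transposition identity then reads, schematically,
\begin{equation*}
\int_\Omega w(T,\cdot)\,p_T\,dx-\int_\Omega (y_0-\overline{y}_0)\,p(0,\cdot)\,dx=-\int_0^T\int_\Gamma (u-\overline{u})\frac{\partial p}{\partial n}\,d\sigma\,dt.
\end{equation*}
If $y(T,\cdot)=\overline{y}(T,\cdot)$ then the first term vanishes, yielding a constraint linking the (nonzero) initial discrepancy to a boundary integral of the controls against $\partial p/\partial n$.

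\textbf{Selecting the sign-definite test solution.}
The delicate point, and the main obstacle, is to choose $p_T$ so that the adjoint solution has a definite sign, so that $\partial p/\partial n$ has a controllable sign on $\Gamma$ and the non-negativity $u\geq 0$, $\overline{u}\geq\nu$ can be exploited. A natural choice is to take $p_T\geq 0$ (for instance the first eigenfunction or a positive bump), so that by the maximum principle $p\geq 0$ throughout and, by Hopf's lemma, $\partial p/\partial n\leq 0$ on the boundary, with a quantitative lower bound on $|\partial p/\partial n|$ on $\Gamma$. Then the boundary term has a definite sign and magnitude. The difficulty is to make this estimate uniform as $T\to 0$: one must show that $\int_\Omega (y_0-\overline{y}_0)p(0,\cdot)\,dx$ remains bounded away from zero, or more precisely is of a larger order than the boundary term, when $T$ is small. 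Because the backward heat semigroup over a short interval keeps $p(0,\cdot)$ close to $p_T$, whereas the boundary integral $\int_0^T\int_\Gamma(\cdot)\partial p/\partial n$ carries an explicit factor of the small time length, I expect the initial-datum term to dominate for small $T$, producing a contradiction with $y(T,\cdot)=\overline{y}(T,\cdot)$.

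\textbf{Closing the argument.}
Concretely, I would argue by contradiction: suppose for a sequence $T_j\to 0$ there exist nonnegative controls $u_j$ reaching the target. Feeding these into the transposition identity with the fixed sign-definite $p_T$ gives, on one side, a fixed nonzero quantity $\int_\Omega(y_0-\overline{y}_0)p(0,\cdot)\,dx$ (which converges to $\int_\Omega(y_0-\overline{y}_0)p_T\,dx$ as $T_j\to 0$, and can be arranged nonzero by choosing $p_T$ appropriately since $y_0\neq\overline{y}_0$), and on the other side a boundary term whose sign is controlled by the non-negativity of the controls and whose magnitude vanishes as $T_j\to 0$. Matching the signs, the inequality forces $\int_\Omega(y_0-\overline{y}_0)p_T\,dx\leq 0$ for every admissible $p_T\geq 0$, and symmetrically (choosing $-p_T$, or a second profile) $\geq 0$, whence $y_0=\overline{y}_0$, contradicting the hypothesis. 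The main technical work lies in the uniform Hopf-type lower bound for $\partial p/\partial n$ and the short-time continuity of the backward flow $p(0,\cdot)\to p_T$; these I would extract from standard parabolic regularity together with the smoothness of the coefficients $A,b,c$ assumed in \eqref{linear_boundary_2}.
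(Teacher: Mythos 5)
Your overall strategy --- transposition against a backward adjoint solution, a Hopf-type sign for $\partial p/\partial n$, and the short-time continuity $p(0,\cdot)\to p_T$ --- is exactly the skeleton of the paper's proof, and your treatment of the $\overline{u}$-term is fine. But there is a genuine gap at the heart of the argument: a \emph{sign-definite} final datum $p_T\geq 0$ cannot work in the main case $y_0\leq\overline{y}_0$, $y_0\neq\overline{y}_0$. First, your claim that the boundary term ``vanishes as $T_j\to 0$'' is false for the part involving the controls $u_j$: nothing bounds $u_j$ as $T_j\to 0$ (parabolic null controllability in arbitrarily short time is possible precisely because controls blow up), so $\int_0^{T_j}\int_\Gamma u_j\,\frac{\partial p}{\partial n}\,d\sigma\,dt$ need not be small; it only has a sign, namely $\leq 0$ when $u_j\geq 0$ and $\frac{\partial p}{\partial n}\leq 0$. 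Only the term carrying the fixed bounded $\overline{u}$ is $O(T_j)$. Feeding this into your identity and letting $T_j\to 0$ yields the one-sided inequality $\int_\Omega(y_0-\overline{y}_0)\,p_T\,dx\leq 0$ for every admissible $p_T\geq 0$, i.e.\ $y_0\leq\overline{y}_0$ a.e. This contradicts the hypothesis only when $y_0\nleq\overline{y}_0$; when $y_0\lneq\overline{y}_0$ it is vacuous. Second, the proposed symmetrization by ``choosing $-p_T$'' does not give the reverse inequality: replacing $p_T$ by $-p_T$ flips the sign of $p$ and hence of $\frac{\partial p}{\partial n}$ as well, so the control term's sign flips simultaneously and you land on exactly the same inequality, not its opposite. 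Thus your proof, once made rigorous, covers only the case $y_0\nleq\overline{y}_0$ --- which the paper handles even more simply, by the comparison principle ($y\geq z$, the free solution, tested against any nonnegative $\varphi$ with $\int_\Omega(y_0-\overline{y}_0)\varphi\,dx>0$) --- and leaves the principal case unproved.

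The missing idea is the paper's choice of a final datum that is deliberately \emph{not} sign-definite. After reducing (by subtracting the free solution emanating from $y_0$) to a target trajectory whose initial datum is nonnegative and nonzero, the paper takes $\varphi_T=-\phi_1+2(1-\zeta)\phi_1$, where $\phi_1>0$ is the first Dirichlet eigenfunction and $\zeta$ is a cutoff equal to $1$ at distance $\geq\delta$ from $\partial\Omega$. This $\varphi_T$ is negative in the bulk of $\Omega$, so its pairing with the nonnegative reduced datum is $\leq -\theta/2<0$ once $\delta$ is small, yet it equals $+\phi_1$ in a thin boundary layer, so $\frac{\partial \varphi_T}{\partial n}<0$ on $\partial\Omega$; the uniform regularity of the adjoint flow (Corollary \ref{cor1}) then keeps $\frac{\partial \varphi}{\partial n}<0$ on the whole lateral boundary for $T$ small, preserving the favorable sign of the (possibly huge) control term. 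With this mixed-sign profile your own computation closes: the initial-datum side converges to a strictly nonzero limit of the ``wrong'' sign while the boundary side is bounded by $CT$ in the ``right'' direction, giving the contradiction. Without such a profile, the sign-definite choice is structurally incapable of detecting initial data lying below the target.
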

\begin{proof}
We distinguish two cases.

	\textbf{Case 1: $y_0\leq \overline{y}_0$.}\\
\textit{Step 1} \ \textbf{Reduction to the case $y_0=0$.}
We introduce $z$ solution to \eqref{linear_boundary_2} with initial datum $y_0$ and null control. By \textit{subtracting} $z$ both to the target trajectory and to the controlled one, we justify the reduction. Indeed, let $\overline{\xi}=\overline{y}-z$ be the new target trajectory solution to \eqref{linear_boundary_2} with initial datum $y_0-\overline{y}_0$ and control $\overline{u}$, while $\xi=y-z$ is the solution to \eqref{linear_boundary_2} with null initial datum and control $u$.

Now, if the result holds in the particular case where $y_0=0$, then for each time $0<T<T_0$ and for any choice of the control $u\in L^{\infty}((0,T)\times\Gamma)^{+}$, we have $\xi(T,\cdot)\neq \overline{\xi}(T, \cdot)$. This, in turn, implies that for any $u\in L^{\infty}((0,T)\times\Gamma)^{+}$, we have $y(T,\cdot)\neq \overline{y}(T,\cdot)$, as desired.\\
	\textit{Step 2} \ \textbf{Weak solutions by transposition of \eqref{linear_boundary_2}.}\\
	The solution $y\in L^{2}((0,T)\times\Omega)\cap C^0([0,T];H^{-1}(\Omega))$ of \eqref{linear_boundary_2} with null initial datum and control $u\in L^{\infty}((0,T)\times\Gamma)$ is characterized by the duality identity%	\begin{equation}
%	\prescript{}{H^{-1}}\langle y(T,\cdot),\varphi_T\rangle_{H^1_0}+\int_{\Omega}y_0(x)\varphi(0,x)dx+\int_{0}^{T}\int_{\Gamma} u\frac{\partial}{\partial n} d\sigma(x)dt=0,
%	\end{equation}
	\begin{equation}\label{def_sol_lin_zero_init_datum_adj_fin_dat}
\langle y(T,\cdot),\varphi_T\rangle+\int_{0}^{T}\int_{\partial \Omega} u\frac{\partial\varphi}{\partial n} d\sigma(x)dt=0,
	\end{equation}
	where $y(T,\cdot)\in H^{-1}(\Omega)$ and $\varphi$ is the solution to the adjoint problem:
	\begin{equation}\label{adjoint_problem_1}
	\begin{cases}
	-\varphi_t-\mbox{div}(A \nabla \varphi) -\mbox{div}(\varphi b)+c\varphi=0\hspace{0.6 cm} & \mbox{in} \hspace{0.10 cm}(0,T)\times \Omega\\
	\varphi=0  & \mbox{on}\hspace{0.10 cm} (0,T)\times \partial \Omega\\
	\varphi(T,x)=\varphi_T(x).  & \mbox{in} \hspace{0.10 cm}\Omega\\
	\end{cases}
	\end{equation}
	As usual $n=A \hat{v}/\|A \hat{v}\|$, where $\hat{v}$ is the outward unit normal to $\partial \Omega$.
	
Thus, to conclude, it suffices to find $T_0>0$ and $\varphi_T \in H^1_0(\Omega)$ such that, for any $T\in (0,T_0)$, the solution of the adjoint system \eqref{adjoint_problem_1} with final datum $\varphi_T$ satisfies:
	 	\begin{equation}\label{seq_fin_dat_descr_1}
	 	\begin{cases}
	 	\left(\frac{\partial \varphi}{\partial n} \right)_{+} = 0\hspace{1.0 cm}& \mbox{on} \ (0,T_0)\times \partial \Omega\\
	 	\langle \overline{y}(T,\cdot), \varphi_T\rangle <0,&\forall T\in [0,T_0).
	 	\end{cases}
	 	\end{equation}
 Indeed, let us suppose that \eqref{seq_fin_dat_descr_1} holds and for some $T\in (0,T_0)$ we can find a nonnegative control $u$ driving \eqref{linear_boundary_2} from $0$ to $\overline{y}(T, \cdot)$. By \eqref{def_sol_lin_zero_init_datum_adj_fin_dat} and \eqref{seq_fin_dat_descr_1}, we have:
	\begin{equation*}
	0=\langle \overline{y}(T,\cdot),\varphi_T\rangle+\int_{0}^{T}\int_{\partial \Omega} u\frac{\partial\varphi}{\partial n} d\sigma(x)dt
	\leq \langle \overline{y}(T,\cdot),\varphi_T\rangle <0.
	\end{equation*}
This would lead to a contradiction. 

We now build the final datum $\varphi_T$ leading to \eqref{seq_fin_dat_descr_1}.\\
	\textit{Step 3} \ \textbf{Construction of the final datum for the adjoint system.}
	\begin{figure}[htp]
		\begin{center}
			% Requires \usepackage{graphicx}
			% replace aims_logo.eps by your figure file name
			\includegraphics[width=12cm]{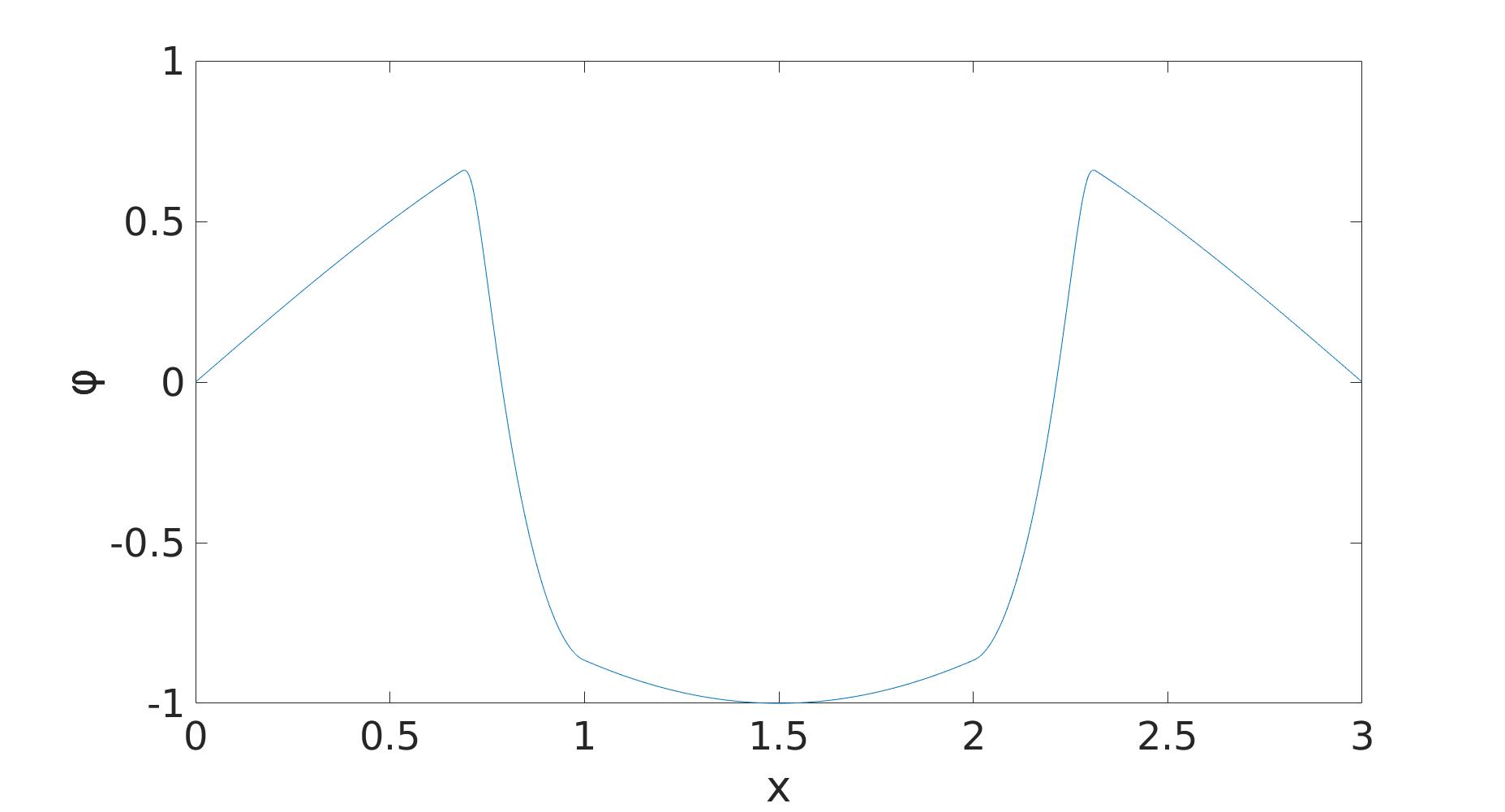}\\
			\caption{Final data for the adjoint system.}\label{grah_1}
		\end{center}
	\end{figure}
Let $\phi_1$ be
%\textcolor{red}{Dario. Previously we denoted by $\phi_1$ the first eigenfunction It is better not to modify the notation.}
 the first eigenfunction of the Dirichlet laplacian in $\Omega$, which is  \textit{strictly positive} in $\Omega$ (\cite[Theorem 2 page 356]{PDE}). 

Let us also  introduce a cut-off function $\zeta \in C^{\infty}(\overline{\Omega})$ such that:
	\begin{itemize}
		\item $\mbox{supp}(\zeta)\subset\subset \Omega$;
		\item $\zeta(x)=1$ for any $x\in\Omega$ such that $\mbox{dist}(x,\partial \Omega)\geq \delta$,
	%	\item $Range(\zeta)\subset [0,1]$
		\end{itemize}
		with $\delta >0$ to be made precise later. 
		
		We are now ready to define the final datum (see Figure \ref{grah_1}):
		\begin{equation}\label{th3_eq16}
		\varphi_T=-\phi_1+2(1-\zeta)\phi_1.
		\end{equation}
		By the definition of $\phi_1$ and $\zeta$, $\varphi_T(x)<0$ for any $x\in \Omega$ such that $\mbox{dist}(x,\partial\Omega)\geq \delta$. On the other hand, $\varphi_T(x)>0$ for any $x\in \Omega\setminus \mbox{supp}(\zeta)$.
		%Note that there exists $\rho\in (0,+\infty)$ such that $\left\{x\in\Omega \ | \ dist(x,\partial\Omega)\leq \rho \right\}\subset\Omega\setminus \mbox{supp}(\zeta)$.
		This means that actually $\varphi_T$ is \textit{negative} up to a small neighborhood of $\partial \Omega$.
		\begin{figure}[htp]
			\begin{center}
				% Requires \usepackage{graphicx}
				% replace aims_logo.eps by your figure file name
				\includegraphics[width=12cm]{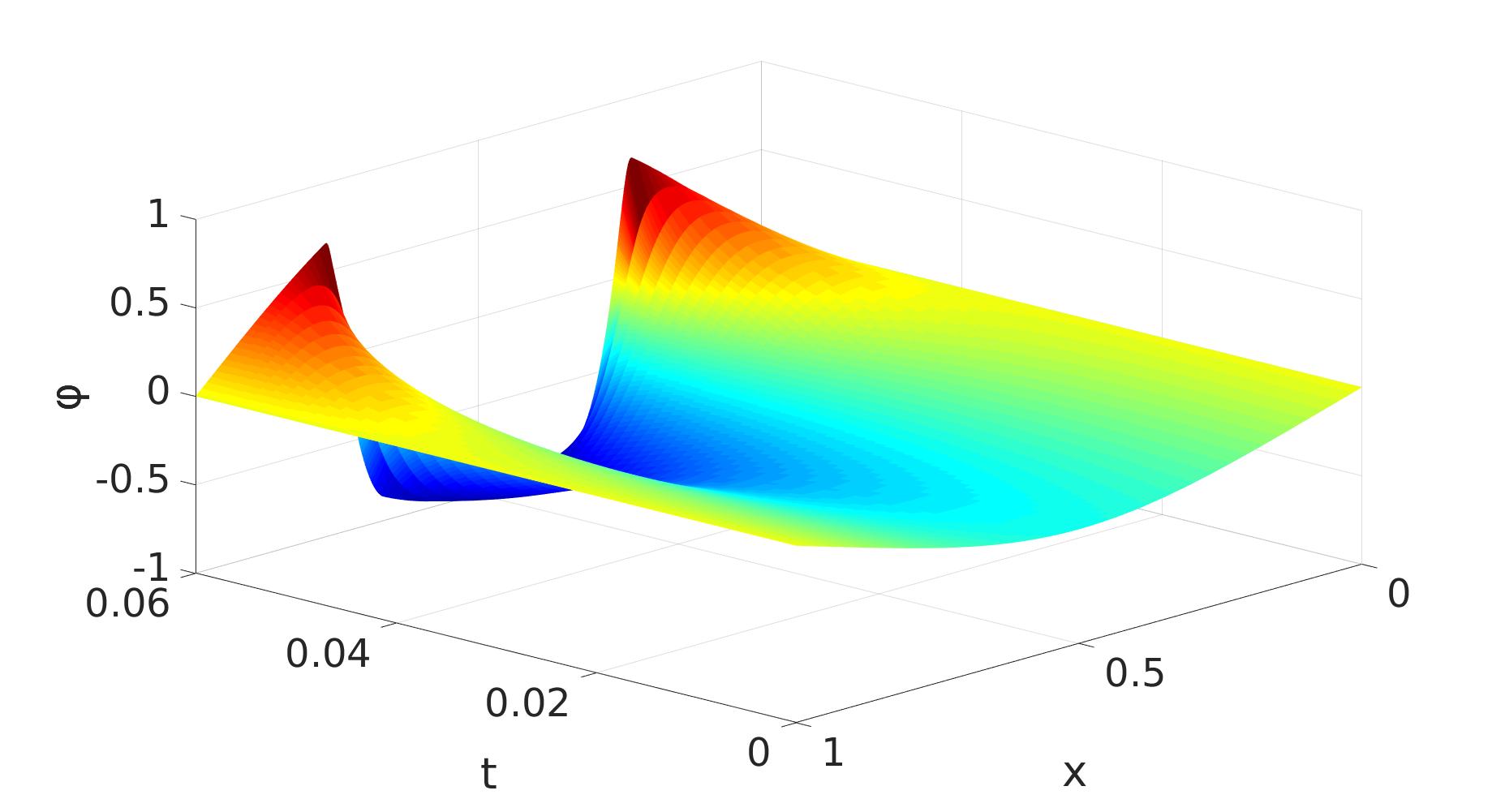}\\
				\caption{Evolution of the adjoint heat equation with final datum $\varphi_T$.}\label{grah_2}
			\end{center}
		\end{figure}
	
	Let us now  check that $\left(\frac{\partial\varphi}{\partial n} \right)_{+}=0$ on $(0,T)\times \partial \Omega$ (see Figure \ref{grah_2}). To this purpose, we first observe that the final datum $\varphi_T\in W^{2,p}(\Omega)\cap W^{1,p}_0(\Omega)$, for any $1<p<+\infty$, as  a consequence of the well-known regularity properties of the first eigenfunction of the Laplacian  (\cite[Theorem 9.32 page 316]{BFP} or \cite[Theorem 2.4.2.5 page 124]{GNE}). 
	
	In view of the regularity of $\varphi_T$ and Corollary \ref{cor1}, we have that $$\frac{\partial \varphi}{\partial n}\in C^0([0,+\infty)\times\overline{\Omega}).$$
	%To prove this, prove that $W^{1,2}((0,T);(W^{2,p}(\Omega),L^p(\Omega)))\hookrightarrow C^{0,\gamma}([0,T];W^{1,p}(\Omega))$.
		 Then, since $\frac{\partial\varphi_T}{\partial n}(x)<0$ for any $x\in \partial \Omega$,
	%This can be proved by Hopf Lemma applied to:
%	-\Delta \phi_1=\lambda_1\phi_1
%	\phi_1=0
%	remembering that \Delta has C^{\infty} coefficients, \partial \Omega \in C^2 and \lambda_1\phi_1\geq 0.	 
		  there exists $T_0>0$ such that:
		\begin{equation*}
		\frac{\partial \varphi}{\partial n}< 0, \quad \forall \ (t,x)\in [0,T_0)\times \partial \Omega,
		\end{equation*}
	which in turn implies $\left(\frac{\partial\varphi}{\partial n} \right)_{+}=0$ on $(0,T_0)\times \partial \Omega$, as required.
	
	    We now prove that $\langle \overline{y}(T,\cdot),\varphi_T\rangle <0$. Indeed,
	    \begin{equation*}
\int_{\Omega}\overline{y}_0\varphi_T dx=\int_{\Omega}\overline{y}_0(-\phi_1+2(1-\zeta)\phi_1) dx.
	    \end{equation*}
	    On the one hand, since $\phi_1(x)>0$ for any $x\in\Omega$:
	    \begin{equation*}
	    \int_{\Omega}\overline{y}_0(-\phi_1(x))dx\leq -\theta<0, 
	    \end{equation*}
	    with $\theta >0$. On the other hand, taking $\delta >0$ small enough,
	    \begin{equation*}
	    \left|\int_{\Omega}\overline{y}_0(1-\zeta)\phi_1 dx \right|\leq \|\overline{y}_0\|_{L^2}\|\phi_1\|_{L^{\infty}}\sqrt{|E_{\delta}|}< \frac{\theta}{4},
	    \end{equation*}
where $E_{\delta}=\left\{x\in\Omega \ | \  \mbox{dist}(x,\partial\Omega)<\delta\right\}$. %\delta=\delta(y_0,\overline{y}_0).
Then,
\begin{equation}\label{th3_eq3}
\int_{\Omega}\overline{y}_0 \varphi_T(x) dx\leq 2\frac{\theta}{4} -\theta=-\frac{\theta}{2}<0.
\end{equation}
Finally, by transposition (see \cite{LM1}), $\overline{y}\in C^0([0,T];H^{-1}(\Omega))$. Hence, choosing $T_0$ small enough, we have:
\begin{equation*}
\langle \overline{y}(T,\cdot),\varphi_T\rangle <0, \quad\forall T\in [0,T_0),
\end{equation*}
as desired.\\
\textbf{Case 2: $y_0\nleq \overline{y}_0$.}
%\textit{Step 1} \ \textit{Conclusion by the comparison principle}
Since $y_0\nleq \overline{y}_0$,  $y_0>\overline{y}_0$ on a set of positive measure. Then, there exists a nonnegative $\varphi\in H^1_0(\Omega)\setminus \left\{0\right\}$ such that $\int_{\Omega}(y_0-\overline{y}_0)\varphi dx >0$.  

Let $z$ be the unique solution to \eqref{linear_boundary_2} with initial datum $y_0$ and null control. By transposition, $z\in C^0([0,T];H^{-1}(\Omega))$. Then, there exists $T_0>0$ such that:
\begin{equation}\label{solution_nullcontrol_positive_linear}
\langle z(T,\cdot)-\overline{y}(T,\cdot),\varphi\rangle >0, \quad \forall \ T\in [0, T_0).
\end{equation}
On the other hand, the comparison principle (see \cite{MPD}) yields $y\geq z$ in $(0,T)\times \Omega$. This, together with \eqref{solution_nullcontrol_positive_linear}, implies that $\langle y(T,\cdot),\varphi\rangle >\langle \overline{y}(T,\cdot),\varphi\rangle$ for any $T\in [0,T_0)$, thus concluding the proof.
\end{proof}

\subsection{Controllability in the minimal time for the linear case}
We prove that controllability holds in the minimal time with measured valued controls.
To this extent, let us define the solution to \eqref{linear_boundary_2} with controls belonging to the space of Radon measures $\mathcal{M}([0,T]\times\partial\Omega)$ endowed with the norm:
\begin{equation*}
\|\mu\|_{\mathcal{M}}=\sup\left\{\int_{[0,T]\times \partial\Omega}\varphi(t,x) d\mu(t,x) \ \Big| \ \varphi\in C^0([0,T]\times \partial \Omega), \ \max_{[0,T]\times\partial\Omega}|\varphi|=1\right\}.
\end{equation*}
We firstly provide the notions of left and right limit of the solution of \eqref{linear_boundary_2} by transposition (see \cite{LM1}). Given $y_0\in L^2(\Omega)$ and $u\in \mathcal{M}([0,T]\times \partial \Omega)$ with $n+2<p<+\infty$, $y_{l}:[0,T]\longrightarrow (W^{2,p}(\Omega)\cap W^{1,p}_0(\Omega))'$ is the left limit of the solution to \eqref{linear_boundary_2} if,  for any $\varphi_T\in W^{2,p}(\Omega)\cap W^{1,p}_0(\Omega)$:
\begin{equation}
\langle y_l(t,\cdot),\varphi_T\rangle -\int_{\Omega}y_0(x)\varphi(0,x)dx+\int_{[0,t)\times\partial\Omega}\frac{\partial \varphi}{\partial n} du=0,
\end{equation}
%where $n=\frac{A \hat{v}}{\|A \hat{v}\|}$, with $\hat{v}$ is the outward unit normal to $\Gamma$.
where $\varphi$ is the solution to the adjoint system:
\begin{equation}\label{adjoint_problem_2}
\begin{cases}
-\varphi_t-\mbox{div}(A \nabla \varphi) -\mbox{div}(\varphi b)+c\varphi=0\hspace{0.6 cm} & \mbox{in} \hspace{0.10 cm}(0,t)\times \Omega\\
\varphi=0  & \mbox{on}\hspace{0.10 cm} (0,t)\times \partial \Omega\\
\varphi(t,x)=\varphi_T(x).  & \mbox{in} \hspace{0.10 cm}\Omega.\\
\end{cases}
\end{equation}
Similarly, $y_{r}:[0,T]\longrightarrow (W^{2,p}(\Omega)\cap W^{1,p}_0(\Omega))'$ is the right limit of the solution to \eqref{linear_boundary_2} if,  for any $\varphi_T\in W^{2,p}(\Omega)\cap W^{1,p}_0(\Omega)$:
\begin{equation}
\langle y_r(t,\cdot),\varphi_T\rangle -\int_{\Omega}y_0(x)\varphi(0,x)dx+\int_{[0,t]\times\partial\Omega}\frac{\partial \varphi}{\partial n} du=0.
\end{equation}
Note the difference between the left limit $y_l$ and the right limit $y_r$ is the domain of integration with respect to $u$. Indeed, for the left limit we integrate over $[0,t)\times\partial\Omega$, while for the right limit we integrate over $[0,t]\times\partial\Omega$. Actually, $y_{l}\neq y_{r}$ if, for instance, $u={\delta_{t_0}}\otimes \delta_{x_0}$ for some $t_0\in [0,T]$ and $x_0\in \partial \Omega$. On the other hand, $y_{l}=y_{r}$ as soon as $u$ is absolutely continuous with respect to the Lebesgue measure on $[0,T]\times \partial\Omega$. This is the case, whenever $u\in L^1$.

We are now able to define the concept of (generalized) solution to \eqref{linear_boundary_2} as:
\begin{equation*}
y:[0,T]\longrightarrow \mathscr{P}((W^{2,p}(\Omega)\cap W^{1,p}_0(\Omega))')
\end{equation*}
\begin{equation*}
t\longmapsto \left\{y_l(t),y_r(t)\right\},
\end{equation*}
where we have denoted as $\mathscr{P}((W^{2,p}(\Omega)\cap W^{1,p}_0(\Omega))')$ the power set of $(W^{2,p}(\Omega)\cap W^{1,p}_0(\Omega))'$. Note that we have defined $y(t,\cdot)$ for any $t\in [0,T]$. Then, it makes sense the trace of $y$ at time $t=T$.

%\textcolor{red}{Dario. As I said in my email this transposition needs to be revised. I think that you cannot write y(t) since y may jump if the measure contains some dirac delta at t, is not it? I think the way you have done you only get the left limit of y at t. I think it is better to take the other definition of transposition solution in which the adjoint has zero datum at T but a right hand side term si that you define the solution in $L^\infty([0,T];(W^{2,p}(\Omega)\cap W^{1,p}_0(\Omega))')$ in one shot. But please be very careful about this. Probably it was not well done in our previous paper with jerome...}

The following holds:

\begin{proposition}[Controllability in the minimal time]\label{prop3}
	Let $\overline{y}$ be the target trajectory, solution to \eqref{linear_boundary_2} with initial datum $\overline{y}_0\in L^2$ and control $\overline{u}\in L^{\infty}$ such that $\overline{u}\geq \nu>0$. Let $y_0\in L^2(\Omega)$ be the initial datum to be controlled and suppose that $T_{min} < \infty$. 
	
	Then, there exists a nonnegative control $\hat{u}\in \mathcal{M}([0,T_{\tiny{\mbox{min}}}]\times \overline{\Gamma})$ such that the right limit of the solution to \eqref{linear_boundary_2} with initial datum $y_0$ and control $\hat{u}$ verifies:
	\begin{equation}\label{prop3_eq-1}
	y_{r}(T_{\mbox{\tiny{min}}},\cdot)=\overline{y}(T_{\mbox{\tiny{min}}},\cdot).
	\end{equation}
\end{proposition}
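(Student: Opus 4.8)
The plan is to realize $\hat u$ as a weak-$*$ limit of controls attached to a minimizing sequence of control times. First I would use the very definition of $T_{\mbox{\tiny{min}}}$ together with Lemma \ref{lemma3}: since $T_{min}<\infty$ and the minimal time is insensitive to the $L^p$-regularity of the controls, there exist a sequence $T_k \downarrow T_{\mbox{\tiny{min}}}$ and nonnegative controls $u_k \in L^{\infty}((0,T_k)\times\Gamma)$ whose associated solutions $y_k$ satisfy $y_k(T_k,\cdot)=\overline y(T_k,\cdot)$. Identifying each $u_k$ with the nonnegative Radon measure $u_k\,dt\,d\sigma$ on the fixed compact set $[0,T_{\mbox{\tiny{min}}}+1]\times\overline\Gamma$, the whole point is to extract a limit along a subsequence and check that it does the job at time $T_{\mbox{\tiny{min}}}$.

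The heart of the argument is a uniform bound $\|u_k\|_{\mathcal M}\le M$. Here I would test the transposition formula for $y_k$ against the adjoint solution $\varphi_k$ of \eqref{adjoint_problem_1} on $(0,T_k)$ whose final datum is the first Dirichlet eigenfunction $\phi_1>0$. By the Hopf lemma $\tfrac{\partial\varphi_k}{\partial n}<0$ on $\partial\Omega$, and since the horizons are confined to the bounded interval $[T_{\mbox{\tiny{min}}},T_{\mbox{\tiny{min}}}+1]$, the continuity of the normal trace (Corollary \ref{cor1}) and parabolic regularity yield a single constant $c>0$ with $\tfrac{\partial\varphi_k}{\partial n}\le -c$ on $[0,T_k]\times\overline\Gamma$, uniformly in $k$. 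Combining $u_k\ge 0$ with the transposition formula gives
$$ c\int_0^{T_k}\!\!\int_\Gamma u_k\,d\sigma\,dt \le \langle \overline y(T_k,\cdot),\phi_1\rangle - \int_\Omega y_0\,\varphi_k(0,\cdot)\,dx, $$
whose right-hand side is bounded independently of $k$ because $\overline y\in C^0([0,T];H^{-1}(\Omega))$ and $\{\varphi_k(0,\cdot)\}$ is bounded in $L^2(\Omega)$. This is precisely where the nonnegativity of the controls enters, and why a one-sided bound (rather than an $L^\infty$ bound) suffices to control the total mass.

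Next I would invoke weak-$*$ compactness of bounded sets of measures to extract $u_k\overset{*}{\rightharpoonup}\hat u$, with $\hat u\ge 0$ since nonnegativity passes to weak-$*$ limits. As each $u_k$ is supported in $[0,T_k]\times\overline\Gamma$ and $T_k\downarrow T_{\mbox{\tiny{min}}}$, testing against continuous functions supported in $(T_{\mbox{\tiny{min}}},\infty)\times\overline\Gamma$ forces $\hat u\in\mathcal M([0,T_{\mbox{\tiny{min}}}]\times\overline\Gamma)$; crucially, some mass may concentrate on the slice $\{T_{\mbox{\tiny{min}}}\}\times\overline\Gamma$ in the limit, which is exactly the reason the statement is phrased in terms of the right limit $y_r$.

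Finally I would pass to the limit in the defining (transposition) identity. For a fixed $\varphi_T\in W^{2,p}(\Omega)\cap W^{1,p}_0(\Omega)$, as $T_k\to T_{\mbox{\tiny{min}}}$ the adjoint solutions on $(0,T_k)$, and with them their normal traces $\tfrac{\partial\varphi}{\partial n}$ (continuous up to the boundary by Corollary \ref{cor1}), converge to the adjoint solution on $(0,T_{\mbox{\tiny{min}}})$; since the limiting integration runs over the \emph{closed} set $[0,T_{\mbox{\tiny{min}}}]\times\partial\Omega$, any atom of $\hat u$ at $t=T_{\mbox{\tiny{min}}}$ is weighted by $\tfrac{\partial\varphi_T}{\partial n}$, matching verbatim the definition of the right limit. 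Using $\langle y_k(T_k,\cdot),\varphi_T\rangle=\langle\overline y(T_k,\cdot),\varphi_T\rangle\to\langle\overline y(T_{\mbox{\tiny{min}}},\cdot),\varphi_T\rangle$, the limit identity reduces to $\langle y_r(T_{\mbox{\tiny{min}}},\cdot),\varphi_T\rangle=\langle\overline y(T_{\mbox{\tiny{min}}},\cdot),\varphi_T\rangle$ for every $\varphi_T$, which is \eqref{prop3_eq-1}. I expect the two delicate points to be the uniform mass bound of the second step and the careful bookkeeping of the concentrated mass at $t=T_{\mbox{\tiny{min}}}$, ensuring it is captured by the right limit and not silently discarded.
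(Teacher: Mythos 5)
Your proposal follows essentially the same route as the paper's proof: a minimizing sequence of times $T_k \downarrow T_{\mbox{\tiny{min}}}$ with nonnegative controls, a uniform $L^1$ bound obtained by testing the transposition identity against the adjoint solution with final datum the first Dirichlet eigenfunction $\phi_1$ and invoking the Hopf lemma (this is exactly where the paper, like you, exploits the one-sided sign of the controls), weak-$*$ extraction of a nonnegative limit measure, and passage to the limit in the duality identity to identify the right limit at $T_{\mbox{\tiny{min}}}$. The only organizational difference is that the paper first extends each control by $\overline{u}$ on $(T_k,T_{\mbox{\tiny{min}}}+1)$ so that the limit $k\to\infty$ can be taken at a \emph{fixed} time $T>T_{\mbox{\tiny{min}}}$ (with a fixed adjoint solution) and only afterwards sends $T\downarrow T_{\mbox{\tiny{min}}}$, whereas you pass to the limit directly at the varying horizons $T_k$; both variants ultimately rest on the same regularity of the adjoint normal traces (Corollary \ref{cor1}) to handle the moving final time, and your bookkeeping of possible mass concentration at $t=T_{\mbox{\tiny{min}}}$ via the closed-interval integration in the definition of $y_r$ matches the paper's intent.
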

%Note that the above assumptions are fulfilled whenever the hypotheses of Theorem \ref{th_1} or Theorem \ref{th2} are satisfied.
\begin{proof}
	\textit{Step 1:} \ \textbf{$L^1$-bounds on the controls.}
	Let $T_k=T_{\mbox{\tiny{min}}}+1/k$. By  definition of the minimal control time and the hypotheses, there exists a sequence of nonnegative controls $\left\{u^{T_k}\right\}\subset L^{\infty}$ such that $u^{T_k}$ steers \eqref{linear_boundary_2} from $y_0$ to $\overline{y}(T_k,\cdot)$ in time $T_k$. We extend these control by $\overline{u}$ on $(T_{\mbox{\tiny{min}}}+\frac{1}{k},T_{\mbox{\tiny{min}}}+1)$, getting a sequence $\left\{u^{T_k}\right\}\subset L^{\infty}((0,T_{\mbox{\tiny{min}}}+1)\times\Gamma)$.
	
	We want to prove that this sequence is bounded in $L^1((0,T_{\mbox{\tiny{min}}}+1)\times\Gamma)$.
	
	The arguments we employ resemble the ones employed in the proof of the positiveness of the minimal time and use the definition of solution to \eqref{linear_boundary_2} by transposition. 
	
	Let $\phi_1$ be the first eigenfunction of the Dirichlet laplacian in $\Omega$. 
	%By \cite[Theorem 9.32 page 316]{BFP} or \cite[Theorem 2.4.2.5 page 124]{GNE}, we have $\phi_1\in W^{2,p}(\Omega)\cap W^{1,p}_0(\Omega)$ for any $1<p<+\infty$. %\textcolor{red}{Dario. This issue of the regularity of the first eigenfunction is probably a repetition.}
	By applying the Corollary \ref{cor1} to the adjoint system:
	\begin{equation}\label{adjoint_problem_3}
	\begin{cases}
	-\varphi_t-\mbox{div}(A \nabla \varphi) -\mbox{div}(\varphi b)+c\varphi=0\hspace{0.6 cm} & \mbox{in} \hspace{0.10 cm}(0,T_k)\times \Omega\\
	\varphi=0  & \mbox{on}\hspace{0.10 cm} (0,T_k)\times \partial \Omega\\
	\varphi(T_k,x)=\phi_1.  & \mbox{in} \hspace{0.10 cm}\Omega\\
	\end{cases}
	\end{equation}
	we get $\varphi\in C^0([0,T_k];C^1(\overline{\Omega}))\cap W^{1,2}_p((0,T_k)\times \Omega)$ for any $k$. 
	
	%\textcolor{red}{Dario. Note that the definition of transposition solution you use has to coincide with the one you used before}
	
	By definition of the solution by transposition to \eqref{linear_boundary_2}, we have:
	\begin{equation}\label{prop3_eq0}
	\langle \overline{y}(T_k,\cdot),\phi_1 \rangle -\int_{\Omega}y_0(x)\varphi(0,x)dx+\int_0^{T_k}\int_{\partial\Omega}\frac{\partial \varphi}{\partial n} u^{T_k}dxdt=0.
	\end{equation}
	At this stage, we realize that:
	\begin{equation}\label{prop3_eq1}
	\frac{\partial\varphi}{\partial n}\leq -\theta,\quad \forall \ (t,x)\in [0,T_k]\times \partial\Omega,
	\end{equation}
	for some $\theta >0$. Indeed, a strong maximum principle for \eqref{adjoint_problem_3} holds. The proof of this can be done in two steps. Firstly, by the transformation $\tilde{\varphi}=e^{-\lambda t}\varphi$ with $\lambda=\|c\|_{L^{\infty}}+\|\mbox{div}(b)\|_{L^{\infty}}$, we reduce to the case $c-\mbox{div}(b)\geq 0$. %\textcolor{red}{Dario. I am not sure you can ignore the convective term involving $b $. Can you? Does the maximum principle apply for this term in divergence form? Or do you need you expand it and also absorb the term $div (b) \varphi$?}
	Then, we observe that proof of the Hopf Lemma (see \cite{PDE}) works since $\varphi\in C^0([0,T_k];C^1(\overline{\Omega}))\cap W^{1,2}_p((0,T_k)\times \Omega)$. This enables us to obtain \eqref{prop3_eq1}. Finally, by \eqref{prop3_eq0}, \eqref{prop3_eq1} and the positiveness of $u^{T_k}$, we have:
	\begin{equation*}
	\theta\|u^{T_k}\|_{L^1}=\theta\int_{0}^{T_k}\int_{\partial\Omega} u^{T_k}dxdt\leq \int_0^{T_k}\int_{\partial\Omega}-\frac{\partial \varphi}{\partial n} u^{T_k}dxdt
	\end{equation*}
	\begin{equation*}
	=\langle \overline{y}(T_k,\cdot),\phi_1 \rangle -\int_{\Omega}y_0(x)\varphi(0,x)dx\leq M,
	\end{equation*}
	where the last inequality is due to the continuous dependence for \eqref{linear_boundary_2} and \eqref{adjoint_problem_3}.\\
	\textit{Step 2:} \ \textbf{Conclusion.}
	Since $\left\{u^{T_k}\right\}$ is bounded in $L^1((0,T_{\mbox{\tiny{min}}}+1)\times\Gamma)$, there exists $\hat{u}\in \mathcal{M}([0,T_{\mbox{{\tiny{min}}}}+1]\times \overline{\Gamma})$ such that, up to subsequences:
	\begin{equation*}
	u^{T_k}\rightharpoonup \hat{u}
	\end{equation*}
	in the weak$^{*}$ sense. Clearly, $\hat{u}$ is a nonnegative measure. Finally, for any $k$ large enough and
	$T_{\mbox{\tiny{min}}}<T <T_{\mbox{\tiny{min}}}+1$, by definition of $u^{T_k}$:
	\begin{equation*}
	\langle \overline{y}(T,\cdot),\varphi_T\rangle -\int_{\Omega}y_0(x)\varphi(0,x)dx+\int_{[0,T]\times\partial\Omega}\frac{\partial \varphi}{\partial n} du^{T_k}=0,
	\end{equation*}
	for any final datum for the adjoint system $\varphi_T\in W^{2,p}(\Omega)\cap W^{1,p}_0(\Omega)$. Right now, by definition of weak$^*$ limit, letting $k\to +\infty$:
	\begin{equation}\label{prop3_eq6}
	\langle \overline{y}(T,\cdot),\varphi_T\rangle -\int_{\Omega}y_0(x)\varphi(0,x)dx+\int_{[0,T]\times\partial\Omega}\frac{\partial \varphi}{\partial n} d\hat{u}=0,
	\end{equation}
	which in turn implies that $y_{r}(T,\cdot)=\overline{y}(T,\cdot)$ in $(W^{2,p}(\Omega)\cap W^{1,p}_0(\Omega))'$, where $y_r$ is the right limit of the solution to \eqref{linear_boundary_2} with initial datum $y_0$ and control $\hat{u}$. To show that actually $y_r(T_{\mbox{\tiny{min}}},\cdot)=\overline{y}(T_{\mbox{\tiny{min}}},\cdot)$, it remains to take the limit as $T\to T_{\mbox{\tiny{min}}}$ in \eqref{prop3_eq6}. This task can be accomplished by employing the regularity of the adjoint problem (Corollary \ref{cor1}) and $|\hat{u}|((T_{\mbox{\tiny{min}}},T]\times \partial \Omega)=|\overline{u}|((T_{\mbox{\tiny{min}}},T]\times \partial \Omega)\to 0$ as $T\to T_{\mbox{\tiny{min}}}$. Then, we have $y_r(T_{\mbox{\tiny{min}}},\cdot)=\overline{y}(T_{\mbox{\tiny{min}}},\cdot)$, as required.
	%\textcolor{red}{Dario. I am not sure about this continues dependence. We need to be more careful in te definition of solution by transposition and in this limit process.}
\end{proof}

\subsection{Semilinear case}

We now consider the semilinear system \eqref{semilinear_boundary_1}. We take as target trajectory $\overline{y}$ global solution to \eqref{semilinear_boundary_1} with initial datum $\overline{y}_0\in L^{\infty}$ and bounded control $\overline{u}\geq \nu $, where $\nu >0$. Let $y_0\in L^{\infty}(\Omega)$ be the initial datum and $T>0$. We take a nonnegative control $u\in L^{\infty}((0,T)\times\Gamma)$ such that there exists $y$ solution to \eqref{semilinear_boundary_1} globally defined in $[0,T]$. We introduce the minimal controllability time:
\begin{equation}\label{def_min_time_semilinear}
T_{\mbox{\tiny{min}}} \overset{{\tiny \mbox{def}}}{=} \inf\left\{T>0 \ \big| \ \exists u\in L^{\infty}((0,T)\times\Gamma)^+, \ \exists \hspace{0.02 cm} y(T,\cdot)=\overline{y}(T,\cdot)\right\},
\end{equation}
where, as usual, $\inf(\varnothing)=+\infty$. As before, our goal is to show the lack of controllability in small time, i.e. $T_{\mbox{\tiny{min}}}\in (0,+\infty]$. Two different situations may occur:
\begin{itemize}
	\item $y_0\nleq \overline{y}_0$, namely $y_0>\overline{y}_0$ on a set of strictly positive measure. In this case the positivity of the waiting time may be proved without any additional assumption on the nonlinearity;
	\item $y_0\leq \overline{y}_0$. We prove that $T_{\mbox{\tiny{min}}}>0$ under some assumptions on the nonlinearity.
	In particular, we assume either the nonlinearity to be globally Lipschitz:
	\begin{equation}\label{hypo_1}
	|f(t,x,y_2)-f(t,x,y_1)|\leq L|y_2-y_1|,\quad \forall \ (t,x,y_1,y_2)\in \mathbb{R}^+\times \overline{\Omega}\times \mathbb{R}^2
	\end{equation}
	or ``strongly'' increasing, i.e. $y\longmapsto f(t,x,y)$ must be nondecreasing and
	\begin{equation}\label{hypo_2}
	f(t,x,y_2)-f(t,x,y_1)\geq C(y_2-y_1)\left(\ln(|y_2-y_1|) \right)^p,
	\end{equation}
	for any $(t,x)\in \mathbb{R}^+\times \overline{\Omega}$ and $y_2-y_1>z_0$, with $z_0>1$ and $p>2$.
\end{itemize}

These hypotheses correspond to two different situations which lead to the same result, i.e. the positivity of the waiting time.
Hypothesis \eqref{hypo_1} imposes a lower and upper bound to the growth of $y\longmapsto f(t,x,y)$ which yields the lack of constrained controllability in time, with arguments similar to the linear case.  
On the other hand, the ``strong'' superlinear dissipativity condition (Hypothesis \eqref{hypo_2}) produces a damping effect on the action of the control (see \cite{henry1977etude} and \cite{DTA}). As it is well known, this enables to prove that, actually, the minimal time is positive even in the \textit{unconstrained} case. 

We formulate now the result.
\begin{theorem}[Positivity of the minimal time]\label{th4}
	Let $y_0\in L^{\infty}(\Omega)$ be an initial datum and $\overline{y}$ be a target trajectory solution to \eqref{semilinear_boundary_1} with initial datum $\overline{y}_0$ and control $\overline{u}\in L^{\infty}((0,\overline{T})\times \Gamma)$ with $\overline{u} \ge \nu >0, \, a. e.$ We suppose $y_0\neq \overline{y}_0$. Then, $T_{min}>0$ if $y_0\nleq \overline{y}_0$ or under assumptions \eqref{hypo_1} or \eqref{hypo_2} when $y_0\leq \overline{y}_0$.
\end{theorem}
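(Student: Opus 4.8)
The statement splits into three regimes, and the plan is to treat each by reducing, as far as possible, to the already established linear result (Theorem \ref{th3}), handling the nonlinearity by linearization in two of the cases and by a barrier in the third.

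\smallskip
\textbf{Case $y_0\nleq \overline{y}_0$.} I would mirror Case 2 of Theorem \ref{th3}. Since $y_0>\overline{y}_0$ on a set of positive measure, pick a nonnegative $\varphi\in H^1_0(\Omega)\setminus\{0\}$ with $\int_\Omega(y_0-\overline{y}_0)\varphi\,dx>0$, and let $z$ solve \eqref{semilinear_boundary_1} with datum $y_0$ and null control. Because $y$ and $z$ solve the \emph{same} $C^1$ semilinear equation, their difference satisfies a linear parabolic equation whose zeroth-order coefficient is the mean-value linearization $\int_0^1\partial_y f\,ds$, which is bounded; hence the comparison principle applies with no recourse to (D), and $u\ge 0$ together with coinciding initial data yields $y\ge z$ a.e. Using continuity of $z$ and $\overline{y}$ at $t=0$ one gets $\langle z(T,\cdot)-\overline{y}(T,\cdot),\varphi\rangle\to\int_\Omega(y_0-\overline{y}_0)\varphi\,dx>0$, so there is $T_0>0$ with $\langle y(T,\cdot),\varphi\rangle\ge\langle z(T,\cdot),\varphi\rangle>\langle\overline{y}(T,\cdot),\varphi\rangle$ for $T\in(0,T_0)$, forcing $y(T,\cdot)\neq\overline{y}(T,\cdot)$. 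No assumption on $f$ is needed.

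\smallskip
\textbf{Case $y_0\le\overline{y}_0$, $f$ globally Lipschitz \eqref{hypo_1}.} Here I would transport the transposition argument of Case 1 of Theorem \ref{th3} through a linearization. With $z$ as above, set $\xi=y-z$ and $\overline{\xi}=\overline{y}-z$. Then $\xi$ solves
\begin{equation*}
\xi_t-\mbox{div}(A\nabla\xi)+b\cdot\nabla\xi+\tilde{c}(t,x)\xi=0,\qquad \tilde{c}=\int_0^1\partial_y f(t,x,z+s\xi)\,ds,
\end{equation*}
with null initial datum, boundary control $u\ge 0$, and $\|\tilde{c}\|_{L^\infty}\le L$ by \eqref{hypo_1}. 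Transposing against the adjoint problem with potential $\tilde c$ and the final datum $\varphi_T=-\phi_1+2(1-\zeta)\phi_1$ of Theorem \ref{th3} gives $\langle\xi(T,\cdot),\varphi_T\rangle=-\int_0^T\!\int_{\partial\Omega}u\,\frac{\partial\varphi}{\partial n}\,d\sigma\,dt\ge 0$ as soon as $\frac{\partial\varphi}{\partial n}\le 0$ on $(0,T)\times\partial\Omega$, while $\overline{\xi}(0,\cdot)=\overline{y}_0-y_0\ge 0$, $\not\equiv 0$, so by continuity $\langle\overline{\xi}(T,\cdot),\varphi_T\rangle\to\int_\Omega(\overline{y}_0-y_0)\varphi_T\,dx<0$. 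Thus $\xi(T,\cdot)=\overline{\xi}(T,\cdot)$, i.e.\ $y(T,\cdot)=\overline{y}(T,\cdot)$, is impossible for small $T$. The one genuinely new point is \emph{uniformity}: the potential $\tilde c$ depends on the unknown control, so the time $T_0$ on which $\frac{\partial\varphi}{\partial n}\le 0$ must be chosen uniformly over all admissible $\tilde c$. I would obtain this from Corollary \ref{cor1}, whose boundary regularity estimate for the adjoint system is uniform over zeroth-order coefficients with $\|\tilde c\|_{L^\infty}\le L$, combined with the strict sign $\frac{\partial\varphi_T}{\partial n}<0$ on $\partial\Omega$; this is the main (though mild) obstacle of this case.

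\smallskip
\textbf{Case $y_0\le\overline{y}_0$, strong superlinearity \eqref{hypo_2}.} Here I would run a barrier argument on $\eta=y-\overline{y}$, which solves $\eta_t-\mbox{div}(A\nabla\eta)+b\cdot\nabla\eta+[f(t,x,y)-f(t,x,\overline{y})]=0$ with $\eta(0,\cdot)=y_0-\overline{y}_0\le 0$. Since $y_0\not\equiv\overline{y}_0$, fix an interior ball $B\subset\subset\Omega$ and $c>0$ with $\eta(0,\cdot)\le -c$ a.e.\ on $B$. The decisive observation is that \eqref{hypo_2} with $p>2$ is exactly the Keller--Osserman threshold for the shifted nonlinearity $g(t,x,s)=f(t,x,\overline{y}+s)-f(t,x,\overline{y})$, which obeys $g(t,x,s)\ge Cs(\ln s)^p$ for $s>z_0$: since $\int^{\infty}\!\big(s(\ln s)^{p/2}\big)^{-1}ds<\infty$ precisely when $p>2$, there exists in $B$ a maximal (large) solution $\Theta$ of $\eta$'s equation with $\Theta=+\infty$ on $\partial B$, finite inside, with initial datum $\Theta(0,\cdot)=-c$, and — crucially — continuous up to $t=0$ at interior points, $\limsup_{t\to 0}\Theta(t,x)\le -c$. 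By comparison on $B\times(0,T)$ (the infinite boundary value of $\Theta$ dominates the bounded trace of $\eta$ on $\partial B$, and $\eta(0,\cdot)\le\Theta(0,\cdot)$ on $B$) one gets $\eta\le\Theta$, whence $\eta(T,x_0)\le\Theta(T,x_0)<0$ for any $x_0\in B$ and $T$ small, so $y(T,\cdot)\neq\overline{y}(T,\cdot)$. The hard part is precisely the construction of this universal interior barrier $\Theta$ and its localization/boundary-layer estimate up to $t=0$, following \cite{henry1977etude} and \cite{DTA}: it is here that the strict inequality $p>2$ is used, and it is the strong absorption that overrides the influence of the boundary control, which is why in this regime $T_{\min}>0$ even in the absence of the sign constraint.
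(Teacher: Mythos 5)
Your proposal is correct and follows essentially the same route as the paper: for $y_0\nleq\overline{y}_0$ it uses the same comparison-principle-plus-test-function argument (with the bounded mean-value potential justifying comparison without (D)), and for the globally Lipschitz case the same reduction to a linear equation with potential bounded by $L$, transposition against the adjoint solution with final datum $\varphi_T=-\phi_1+2(1-\zeta)\phi_1$, and uniformity of $T_0$ over all potentials with $\|c\|_{L^{\infty}}\leq L$ via Corollary \ref{cor1}. The only divergence is in the case of hypothesis \eqref{hypo_2}, where the paper gives no proof at all and defers, via Remark \ref{remark1}, to \cite{henry1977etude} and \cite[Lemma 7]{DTA}, whereas you sketch the Keller--Osserman barrier construction that underlies precisely those references; your treatment is, if anything, more explicit there, and correctly isolates the barrier's localization up to $t=0$ as the step that must be imported from the literature.
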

First of all, we state the following remark.
\begin{remark}\label{remark1}
	The lack of \textit{unconstrained} controllability in small time under the assumption of ``strong'' superlinear dissipativity (Hypothesis \eqref{hypo_2}) is well known in the literature (see \cite{henry1977etude} and \cite{DTA})). One can check this by adapting the techniques developed in \cite[Lemma 7]{DTA}.
\end{remark}
We prove now the Theorem \ref{th4} in the remaining cases. We proceed as follows:
\begin{itemize}
	\item proof in case $y_0\nleq \overline{y}_0$ for general nonlinearities;
	\item proof in case $y_0\geq \overline{y}_0$ under assumption \eqref{hypo_1}.
\end{itemize}
\begin{proof}[Proof of Theorem \ref{th4} in case $y_0\nleq \overline{y}_0$ for general nonlinearities]
	\textit{Case 1: $y_0\nleq \overline{y}_0$.}
	%\textit{Step 1} \ \textbf{Conclusion}\\
	By Proposition \ref{prop0}, taking $\overline{T}>0$ sufficiently small, \eqref{semilinear_boundary_1} admits an unique solution $z$ defined in $[0,\overline{T}]$ with initial datum $y_0$ and null control. By assumptions, $y_0>\overline{y}_0$ in a set of positive measure. Then, there exists a nonnegative $\varphi\in H^1_0(\Omega)\setminus\left\{0\right\}$ such that $\int_{\Omega}(y_0-\overline{y}_0)\varphi dx>0$. Now, since $z-\overline{y}\in C^0([0,\overline{T}];H^{-1}(\Omega))$ and $\langle z(0,\cdot)-\overline{y}(0,\cdot),\varphi\rangle >0$, we have:
	\begin{equation}\label{th4_eq6}
	\langle z(T,\cdot),\varphi\rangle>\langle \overline{y}(T,\cdot), \varphi\rangle, \quad \forall \ T\in [0,T_0),
	\end{equation}
	with $T_0\in (0,\overline{T})$ small enough.
	
	At this point, we are going to show that $T_{\mbox{\tiny{min}}}\geq T_0$. Indeed, let $T\in (0,T_0)$ and $u\in L^{\infty}((0,T)\times\Gamma)$ be a nonnegative control such that \eqref{semilinear_boundary_1} admits a global solution $y$ with initial datum $y_0$ and control $u$.
	%\textcolor{red}{Dario. The way the previous sentence is formulated is not totally clear. I have the impression you are refering to a local existence result and the possible need of taking time very small to avoid the possible blow-up.}
	Then, by the comparison principle, we have $y\geq z$. Indeed, one realizes that the difference $y-z$ satisfies the linear system:
	\begin{equation*}\label{linear_boundary_8}
	\begin{cases}
	\xi_t-\mbox{div}(A \nabla \xi) +b\cdot\nabla \xi+c\xi=0\hspace{0.6 cm} & \mbox{in} \hspace{0.10 cm}(0,{T})\times \Omega\\
	\xi=u\mathbf{1}_{\Gamma}  & \mbox{on}\hspace{0.10 cm} (0,{T})\times \partial \Omega\\
	\xi(0,x)=0,  & \mbox{in} \hspace{0.10 cm}\Omega\\
	\end{cases}
	\end{equation*}
	where:
	\begin{equation*}c(t,x)=
	\begin{cases}
	\frac{f(t,x,\xi(t,x)+z(t,x))-f(t,x,z(t,x))}{\xi(t,x)} \hspace{1.3 cm} & \xi(t,x)\neq 0\\
	\frac{\partial f}{\partial y}(t,x,z(t,x)) & \xi(t,x)= 0.\\
	\end{cases}
	\end{equation*}
	Since $f\in C^1$ and both $y$ and $z$ are bounded, $c$ is bounded too. Then, we apply the comparison principle to \ref{linear_boundary_8} (see \cite{MPD}), getting $y\geq z$. This, together with \eqref{th4_eq6}, yields:
	\begin{equation*}
	\langle y(T,\cdot),\varphi\rangle \geq \langle z(T,\cdot),\varphi\rangle >\langle \overline{y}(T,\cdot),\varphi\rangle.
	\end{equation*}
	Hence, $
	y(T,\cdot)\neq \overline{y}(T,\cdot)$, as desired.
\end{proof}
We now prove Theorem \ref{th4} in case $y_0\leq \overline{y}_0$ under assumptions \eqref{hypo_1}.
\begin{proof}[Proof of Theorem \ref{th4} in case $y_0\leq \overline{y}_0$ assuming \eqref{hypo_1}]
	First of all, we notice that, since the nonlinearity is globally Lipschitz, finite time blow up never occurs and the corresponding solutions are global in time.\\
	We proceed in several steps.
	
	\textit{Step 1:} \ \textbf{Reduction to the case $y_0=0$.}
	We take $z$ unique solution to \eqref{semilinear_boundary_1} with initial datum $y_0$ and null control. Then, $\xi=y-z$ solves:
	\begin{equation}\label{semilinear_boundary_4}
	\begin{cases}
	\xi_t-\mbox{div}(A \nabla \xi) +b\cdot\nabla \xi+\tilde{f}(t,x,\xi(t,x))=0\hspace{0.6 cm} & \mbox{in} \hspace{0.10 cm}(0,\overline{T})\times \Omega\\
	\xi=u\mathbf{1}_{\Gamma}  & \mbox{on}\hspace{0.10 cm} (0,\overline{T})\times \partial \Omega\\
	\xi(0,x)=0,  & \mbox{in} \hspace{0.10 cm}\Omega\\
	\end{cases}
	\end{equation}
	where $\tilde{f}(t,x,\xi)=f(t,x,\xi+z(t,x))-f(t,x,z(t,x))$. Besides, $\overline{\xi}=\overline{y}-z$ solves \eqref{semilinear_boundary_4} with initial datum $\overline{y}_0-y_0$ and control $\overline{u}$. Then, the problem is reduced to prove the existence of $T_0\in (0,\overline{T})$ such that, for any $T\in (0,T_0)$ and for any  nonnegative $u\in L^{\infty}((0,T)\times\Gamma)$, $\xi(T,\cdot)\neq \overline{\xi}(T,\cdot)$.\\
	\textit{Step 2:} \ \textbf{Reduction to the linear case.}
	%Employ directly the semilinear equation + hypo \eqref{hypo_1} +
	%procedure proof of Theorem 1 with c\equiv M.
	Let $T>0$, $u\in L^{\infty}((0,T)\times\Gamma)$ be a nonnegative control and  $\xi$ be the unique solution to \eqref{semilinear_boundary_4} with null initial datum and control $u$. We set:
	\begin{equation*}c_u(t,x)=
	\begin{cases}
	\frac{\tilde{f}(t,x,\xi(t,x))}{\xi(t,x)} \hspace{1.3 cm} & \xi(t,x)\neq 0\\
	\frac{\partial \tilde{f}}{\partial \xi}(t,x,0) & \xi(t,x)= 0.\\
	\end{cases}
	\end{equation*}
	%\textcolor{red}{Dario. why $0=4 for $ \xi =0$? It should not be the derivative of $f$ at $z$?}
	By \eqref{hypo_1}, $c_u\in L^{\infty}$ and $\|c_u\|_{L^{\infty}}\leq L$. Therefore, $\xi$ solves \eqref{linear_boundary_2} with potential coefficient $c_u$, null initial datum and control $u$. Hence, to conclude, it suffices to show that there exist $T_0\in (0,\overline{T})$ such that, whenever $\|c\|_{L^{\infty}}\leq L$ and $T<T_0$, the unique solution $\xi$ to \eqref{linear_boundary_2} with null initial datum and control $u$ is such that $\xi(T,\cdot)\neq \overline{\xi}(T,\cdot)$.\\
	\textit{Step 3:} \textbf{Conclusion.}
	Reasoning as in the proof of Theorem \ref{th3}, it remains to prove the existence of $T_0\in (0,\overline{T})$, valid for all coefficients verifying $\|c\|_{L^{\infty}}\leq L$, such that:
	\begin{equation}\label{seq_fin_dat_descr_2}
	\begin{cases}
	\left(\frac{\partial \varphi}{\partial n} \right)_{+} = 0\hspace{1.0 cm}& \mbox{on} \ (0,T_0)\times \partial \Omega\\
	\langle \overline{\xi}(T,\cdot), \varphi_T\rangle <0,&\forall T\in [0,T_0),
	\end{cases}
	\end{equation}
	where $\varphi$ is the solution to the adjoint system \eqref{adjoint_problem_1} with an appropriate final datum $\varphi_T$.
	%and coefficient $c$.
	
	The final datum $\varphi_T$ is determined exactly as in \eqref{th3_eq16}. As in the linear case, by \cite[Theorem 9.32 page 316]{BFP} or \cite[Theorem 2.4.2.5 page 124]{GNE}, $\varphi_T\in W^{2,p}(\Omega)\cap W^{1,p}_0(\Omega)$ for any $1<p<+\infty$. We need to prove the existence of $T_0>0$ such that $\left(\frac{\partial \varphi}{\partial n} \right)_{+} = 0$ on $(0,T_0)\times \partial \Omega$, uniformly on the coefficients verifying $\|c\|_{L^{\infty}}\leq L$. By applying Corollary \ref{cor1}, we have that $\frac{\partial\varphi}{\partial n}\in C^{0,\gamma}([0,\overline{T}]\times\overline{\Omega})$
	%To prove this, prove that $W^{1,2}((0,T);(W^{2,p}(\Omega),L^p(\Omega)))\hookrightarrow C^{0,\gamma}([0,T];W^{1,p}(\Omega))$.
	and
	\begin{equation}\label{th4_eq18}
	\left\|\frac{\partial\varphi}{\partial n}\right\|_{C^{0,\gamma}}\leq K,
	\end{equation}
	where both $0<\gamma<1$ and $K$ are independent of the choice of the coefficient $c$ verifying $\|c\|_{L^{\infty}}\leq L$. Since $\frac{\partial\varphi_T}{\partial n}(x)<0$ for any $x\in \partial \Omega$, \eqref{th4_eq18} enables us to deduce the existence of $T_0$ such that:
	\begin{equation*}
	\frac{\partial\varphi}{\partial n}(t,x)<0,\quad\forall \ (t,x)\in [0,T_0)\times \partial \Omega,
	\end{equation*}
	whenever $\|c\|_{L^{\infty}}\leq L$. This in turn implies $\left(\frac{\partial \varphi}{\partial n} \right)_{+} = 0$ on $(0,T_0)\times \partial \Omega$. 
	
	The second inequality of \eqref{seq_fin_dat_descr_2} holds for the same arguments employed in the linear case. This finishes the proof under assumptions \eqref{hypo_1}.
\end{proof}

\section{Numerical experiments and simulations.}

This section is devoted to numerical experiments and simulations. We begin providing explicit lower bounds for the minimal time.

\subsection{The linear case}

Let us show that the dual method developed for the proof of Theorem \ref{th3} provides explicit lower bounds for the minimal controllability time. To simplify the presentation, we discuss the linear case, but a similar analysis can be done for the globally Lipschitz semilinear case. As usual, $\overline{y}$ stands for the target trajectory, while $y_0$ is the initial datum to be controlled.

Firstly, recall that the idea is to find $T_0>0$ and $\varphi_T \in H^1_0(\Omega)$ such that, for any $T\in (0,T_0)$, the solution to the adjoint system \eqref{adjoint_problem_1} with final datum $\varphi_T$ satisfies:
\begin{equation}\label{seq_fin_dat_descr_8}
\begin{cases}
\left(\frac{\partial \varphi}{\partial n} \right)_{+} = 0\hspace{1.0 cm}& \mbox{on} \ (0,T_0)\times \partial \Omega\\
\langle \overline{y}(T,\cdot)-z(T,\cdot), \varphi_T\rangle <0,&\forall T\in [0,T_0).
\end{cases}
\end{equation}
where $z$ is the solution to \eqref{linear_boundary_2} with initial datum $y_0$ and null control.

%Note that this approach has been used to prove $T_{\mbox{\tiny{min}}}>0$ in the hardest case $y_0\leq \overline{y}_0$, but this method can be implemented whenever $y_0\neq \overline{y}_0$. In particular, if $y_0\geq \overline{y}_0$, a natural choice for the final datum is $\varphi_T=\phi_1$, with $\phi_1$ the first eigenfunction of the Dirichlet Laplacian.
The specific final datum $\varphi_T$  in the proof of Theorem \ref{th3} is valid and leads to a lower bound for $T_{\mbox{\tiny{min}}}$ for the waiting time. But this lower bound can eventually be improved by a better choice of  $\varphi_T$.

For example, let us consider the problem of driving the heat problem:
\begin{equation*}\label{heat_boundary_1_1dimensional}
\begin{cases}
y_t-y_{xx}=0\qquad & (t,x)\in (0,T)\times (0,1)\\
y(t,0)=u_1(t)\qquad\qquad\qquad\qquad\qquad\qquad & t\in (0,T)\\
y(t,1)=u_2(t),\qquad\qquad\qquad\qquad\qquad\qquad & t\in (0,T)\\
\end{cases}
\end{equation*}
from $y_0\in \mathbb{R}$ to $\overline{y}\equiv y_1\in \mathbb{R}$, with $0<y_0<y_1$.

Let:
\begin{equation*}
\varphi_T=-\alpha \sin(\pi x)+\beta \sin(3\pi x),
\end{equation*}
where $\alpha$ and $\beta$ are nonnegative parameters to be made precise later. We firstly check the second relation in \eqref{seq_fin_dat_descr_8}. 

First of all, when the initial datum $y_0>0$ is constant, we develop $z$ the solution to the free heat problem :
\begin{equation*}
z(t,x)=4y_0\sum_{p=0}^{\infty}\frac{ e^{-\pi^2(2p+1)^2t}}{(2p+1)\pi}\sin((2p+1)\pi x),
\end{equation*}
which yields:
\begin{equation*}
\int_0^1 z(T,x)\varphi_T(x)dx=y_0\left[-\frac{2}{\pi}\alpha e^{-\pi^2T}+\frac{2}{3\pi}\beta e^{-9\pi^2 T}\right].
\end{equation*}
Hence,
\begin{equation*}
\langle \overline{y}(T,\cdot)-z(T,\cdot),\varphi_T\rangle=y_1\left[-\frac{2\alpha}{\pi}+\frac{2\beta}{3\pi}\right]+y_0\left[\frac{2}{\pi}\alpha e^{-\pi^2 T}-\frac{2}{3\pi}\beta e^{-9\pi^2T}\right].
\end{equation*}
%if: y_1\left[-\frac{2\alpha}{\pi}+\frac{2\beta}{3\pi}\right]+y_0\left[\frac{2}{\pi}\alpha \right]<0
%iff: y_1\left[-1+\frac{\beta}{3\alpha}\right]+y_0<0
%iff: 0<\frac{\beta}{3\alpha}<\frac{y_1-y_0}{y_1}
%iff: 0<\frac{\beta}{\alpha}<\frac{3(y_1-y_0)}{y_1}
The above quantity is strictly negative for any $T>0$ whenever $0<\frac{\beta}{\alpha}<\frac{3(y_1-y_0)}{y_1}$.

Finally, let us check the first relation in \eqref{seq_fin_dat_descr_8}. By the Fourier expansion of $\varphi$, we have that:
\begin{equation*}
\frac{\partial \varphi}{\partial n}(t,0)=\frac{\partial \varphi}{\partial n}(t,1)=\alpha \pi e^{-\pi^2(T-t)}-3\pi\beta e^{-9\pi^2(T-t)}.
\end{equation*}
Then, $\frac{\partial \varphi}{\partial n}(t,0)=\frac{\partial \varphi}{\partial n}(t,1)\leq 0$ for any $0\leq t\leq T$ if and only if:
\begin{equation*}
T\leq \frac{1}{8\pi^2}\log\left(\frac{3\beta}{\alpha}\right).
\end{equation*}
Finally, optimising within the range $0<\frac{\beta}{\alpha}<\frac{3(y_1-y_0)}{y_1}$, we have:
\begin{equation*}
T_{\mbox{\tiny{min}}}\geq \frac{1}{8\pi^2}\log\left(\frac{9(y_1-y_0)}{y_1}\right).
\end{equation*}
In case $y_0\equiv 1$ and $y_1\equiv 5$, this leads to the bound:
\begin{equation*}
T_{\mbox{\tiny{min}}}\geq \frac{1}{8\pi^2}\log\left(\frac{36}{5}\right)\cong 0.0250020.
\end{equation*}
One can also compute a numerical  approximation of the minimal employing \verb!IpOpt! (as in \cite{HCC}). This leads to $T_{\mbox{\tiny{min}}}\cong 0.0498$, which is compatible lower bound. 

The gap between the analytical estimate and the value of the numerical approximation is still significant and leaves for the improvement of the estimates above by means of the use of suitable adjoint solutions as test functions.

On the other hand, when $y_0$ and $y_1$ are constant and such that $y_0>y_1$, by comparison (see \cite{HCC}) we have
\begin{equation*}
T_{\mbox{\tiny{min}}}\geq \frac{1}{\pi^2}\log\left(\frac{y_0}{y_1}\right).
\end{equation*}
This can also be proved y employing the adjoint technique, choosing as final datum for the adjoint state $\varphi_T=\sin(\pi x).$ For instance, when $y_0\equiv 5$ and $y_1\equiv 1$, it turns out:
\begin{equation*}
T_{\mbox{\tiny{min}}}\geq \frac{1}{\pi^2}\log\left(\frac{y_0}{y_1}\right)=\frac{1}{\pi^2}\log(5)\cong 0.1630702.
\end{equation*}

\subsection{The semilinear case}

So far the problem of constrained  controllability in the minimal time has not been analysed in the semilinear case. In the particular case of globally Lipschitz nonlinearities the linear arguments apply, and allow showing that there is a measure value control obtained as limit of controls when the time of control $T$ tends to the minimal one $T_{\mbox{\tiny{min}}}$. This is so since, mainly, globally Lipschitz nonlinearities can be handled by fixed point techniques out of uniform estimates for linear equations with bounded potentials, the bound on the potential being uniform. But the analysis of the actual controllability properties that the system experiences in the minimal time for the limit nonnegative measure control need to be further explored. This is so because of the very weak regularity properties of solutions that make difficult their definition in the nonlinear case.

In this section we run some numerical simulations in the case of a sinusoidal nonlinearity showing that, in fact, one may expect  similar properties as those encountered for the linear problem, namely: 
\begin{itemize}
	\item the positivity of the minimal controllability time in agreement with Theorem \ref{th4};
	\item the sparse structure of the controls in the minimal time.
\end{itemize}
We consider the nonlinearity $f(y)=sin(\pi y)$ in $1d$. The problem under consideration is then:
\begin{equation}\label{semilinear_boundary_1_1dimensional}
\begin{cases}
y_t-y_{xx}+sin(\pi y)=0\qquad & (t,x)\in (0,T)\times (0,1)\\
y(t,0)=u_1(t)\qquad\qquad\qquad\qquad\qquad\qquad & t\in (0,T)\\
y(t,1)=u_2(t).\qquad\qquad\qquad\qquad\qquad\qquad & t\in (0,T)\\
y(0, x) = 1 \qquad & x\in  (0,1)
\end{cases}
\end{equation}
with  final target $y_1\equiv 2$.
%Note than $y_0\equiv 1$ and $y_1\equiv 2$ are steady states connected within the set of steady states by the continuous arc:
%\gamma:[0,1]\longmapsto \mathscr{S}
%s\longmapsto \gamma(s),
%where $\gamma(s)$ is the solutio to:
%\begin{equation}\label{semilinear_boundary_1_1dimensional}
%\begin{cases}
%	-y_{xx}+sin(\pi y)=0\qquad & x\in  (0,1)\\
%	y(0)=s\qquad\qquad\qquad\qquad\qquad\qquad &\\
%	y(1)=s.\qquad\qquad\qquad\qquad\qquad\qquad &\\
%\end{cases}
%\end{equation}
 Note that both the initial datum and the final target  are steady states for the system under consideration. The nonlinearity appearing in the above system is globally Lipschitz. Then, we can apply Theorem \ref{th4} getting $T_{\mbox{\tiny{min}}}>0$. We are now interested in determining numerically $T_{\mbox{\tiny{min}}}>0$ and  the control in the minimal time.

We perform the simulation by using \verb!IpOpt!. As in \cite{HCC}, we employ a finite-difference discretisation scheme combining the explicit Euler discretisation in time and 3-point finite differences in space.

The time-space grid is uniform $\left\{(i\frac{T}{N_t},\frac{j}{N_x})\right\}$, with indexes $i=0,\dots,N_t$ and $j=0,\dots, N_x$. We choose $N_t=200$ and $N_x=20$ so that they satisfy the Courant-Friedrich-Lewy condition $2\Delta t\leq (\Delta x)^2$, where $\Delta t=T/N_t$ and $\Delta x=1/N_x$.
The discretized space is then a matrix space of dimension $(N_t+1)\times (N_x+1)$. 

We denote by $Y$ the discretized state and by $U^0$ and $U^1$ the discretized boundary controls.

The problem of numerically approximating the minimal control time under constraints is addressed by computing the minimum number of time iterations for the discrete dynamics. In other words,  the discretized state $Y$ and the discretized controls $U^i$ are subjected to the discrete dynamics, the boundary and terminal conditions both at the initial and final time and so that the positivity constraint is fulfilled.

We thus solve numerically:
\begin{equation*}
\min T
\end{equation*}
under the constraints:
\begin{equation*}
\begin{cases}
\frac{Y_{i+1,j}-Y_{i,j}}{\Delta t}=\frac{Y_{i,j+1}-Y_{i,j}+Y_{i,j-1}}{\Delta x}+\sin(\pi j)\hspace{0.6 cm} &  i=0,\dots,N_t-1, \ j=1,\dots N_x-1\\
Y_{i,0}=U^0_i,\quad Y_{i,N_x}=U^1_i & i=0,\dots,N_t,\\
U^0_{i}\geq 0,\hspace{0.72 cm} U^1_{i}\geq 0 & i=0,\dots,N_t,\\
Y_{0,j}=y_0,\quad Y_{N_t,j}=y_1 & j=0,\dots,N_x,
\end{cases}
\end{equation*}
Numerical simulations are done employing the expert interior-point optimization routine \verb!IpOpt! (see \cite{IDO}), the modeling language being \verb!AMPL! (see \cite{FAP}).

We observe:
\begin{itemize}
	\item the computed minimal time $T_{\mbox{\tiny{min}}}=0.045197$ is positive in agreement with Theorem  \ref{th4};
	\item controllability holds in the minimal time, thus suggesting that the conclusions of Proposition \ref{prop3} may hold even in the nonlinear context (see figure \ref{graph of the control in the minimal time});
	\item the control in the minimal time exhibits the sparse structure described by figure \ref{graph of the control in the minimal time}. In particular, it seems that the nonnegative control driving \eqref{semilinear_boundary_1_1dimensional} from $y_0\equiv 1$ to $y_1\equiv 2$ in the minimal time is a sum of Dirac masses.
	\end{itemize}
	\begin{figure}[htp]
		\begin{center}
			% Requires \usepackage{graphicx}
			% replace aims_logo.eps by your figure file name
			\includegraphics[width=12cm]{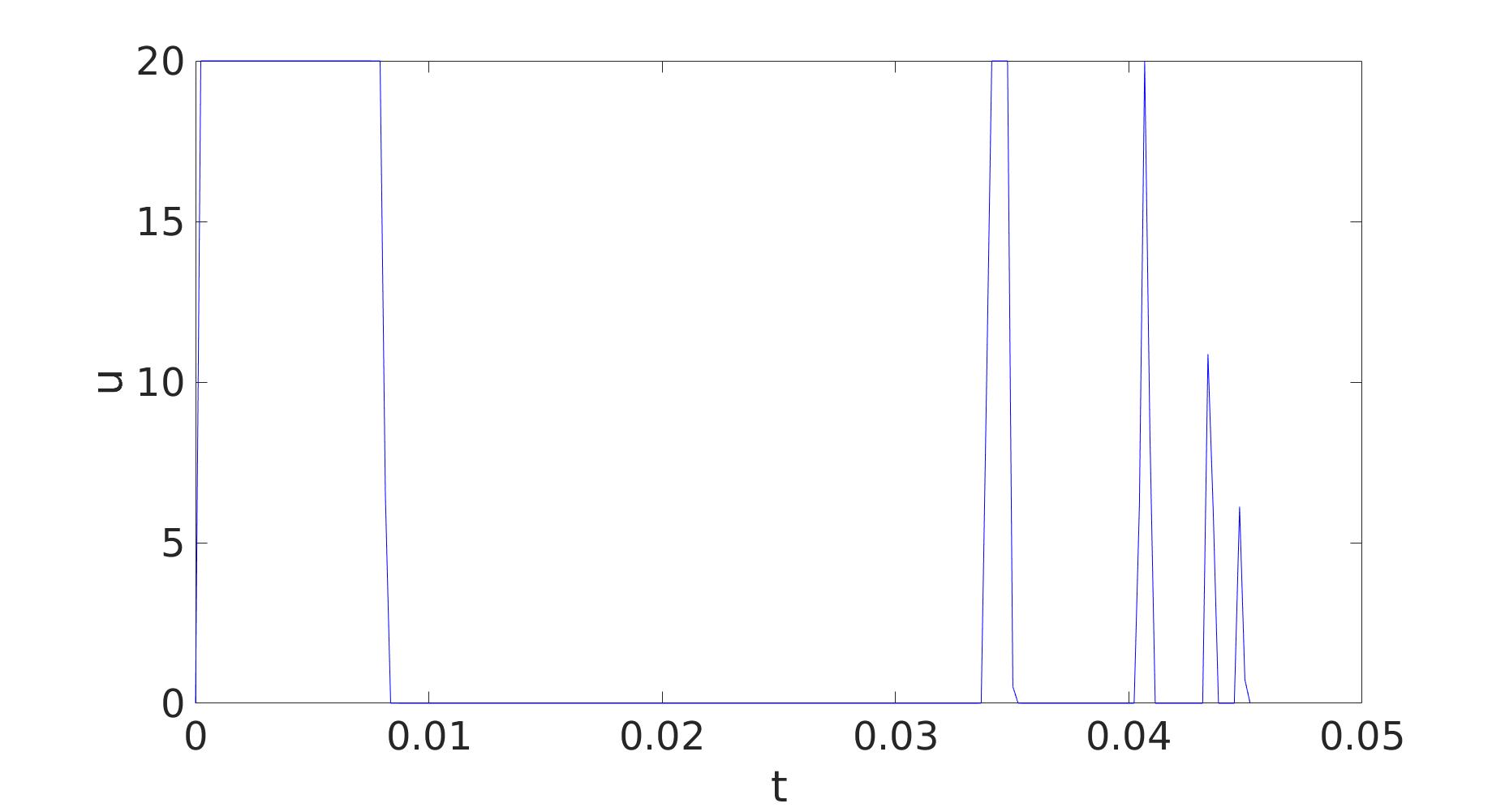}\\
			\caption{graph of the control in the minimal time.}\label{graph of the control in the minimal time}
		\end{center}
	\end{figure}

\section{Conclusions and open problems}

In this paper we have studied the problem of constrained controllability for semilinear heat equations. In particular, we have proved the \textit{steady state} constrained controllability property without any sign or globally Lipschitz assumption on the nonlinearity. Moreover, assuming that the system is  \textit{dissipative}, we have obtained an analogous result for all initial data  in $L^2$, the final target being any trajectory of the system. We have also proved the positivity of the minimal time. In the linear case we have also proved  that constrained controllability holds in the minimal time by means of measure controls. But this issue requires further analysis in the semilinear context because of the very weak regularity of solutions.\\
\subsection{Controllability in the minimal time}
In the linear case, we have shown that, actually, constrained controllability holds in the minimal time with controls belonging to the space of Radon measures (see Proposition \ref{prop3}). 
The constrained controllability in the minimal time  in the nonlinear context is an open problem as mentioned above, because the very weak regularity of solutions makes their definition difficult in the semilinear case.

\subsection{Optimality systems for controllability under Constraints}
 There is a rich literature on the Pontryagin maximum principle (PMS) (see for instance \cite{JOC}). It would be interesting to derive the  optimality system corresponding to the controllability problems with constraints for semilinear heat equations under consideration, and in particular the one corresponding to the minimal time problem.
\subsection{Controllability of the obstacle problem}
The problem of controllability under control constraints for semilinear heat equations is related to the boundary controllability of the obstacle problem. Indeed, a solution to a parabolic obstacle problem can be seen as the limit of solutions to a family of penalised semilinear problems (see \cite{BOV}). The results in this paper apply to the penalised problem but passing to the limit to get relevant results for the obstacle problem is an open topic.

Actually, this issue has been already investigated in the literature. For instance,  in \cite{ACV} the approximate controllability of a parabolic variational inequality has been shown and in \cite{CSC} the authors proved  the boundary controllability of a hyperbolic variational inequality in $1d$. Despite of this, a complete understanding on this topic is still missing.
\section{Appendix}
\subsection{Regularity for linear parabolic equations.}
We begin the Appendix stating a known result  for the linear problem
\begin{equation}\label{linear_boundary_6}
\begin{cases}
y_t-\mbox{div}(A \nabla y) +b\cdot\nabla y+cy=h\hspace{0.6 cm} & \mbox{in} \hspace{0.10 cm}(0,T)\times \Omega\\
y=0  & \mbox{on}\hspace{0.10 cm} (0,T)\times \partial \Omega\\
y(0,x)=y_0(x),  & \mbox{in} \hspace{0.10 cm}\Omega.\\
\end{cases}
\end{equation}

Let $1\leq p\leq +\infty$ and consider  the anisotropic Sobolev Space:
\begin{equation*}
W^{1,2}_p((0,T)\times \Omega)=\left\{y\in L^p((0,T);W^{2,p}(\Omega)) \ | \ y_t\in L^p((0,T)\times \Omega)\right\}
\end{equation*}
endowed with the norm:
\begin{equation*}
\|y\|_{W^{1,2}_p}=\|u\|_{L^p}+\|\nabla_x y\|_{L^p}+\|D^2_x y\|_{L^p}+\|y_t\|_{L^p}.
\end{equation*}
The following holds:
\begin{theorem}[Parabolic regularity]\label{th5}
	Let $\Omega$ be a bounded open set with $\partial \Omega \in C^2$. Assume that $A\in W^{1,\infty}((0,T)\times \Omega;\mathbb{R}^{n\times n})$, $b\in L^{\infty}((0,T)\times \Omega;\mathbb{R}^n)$ and $c\in L^{\infty}((0,T)\times \Omega)$. Let $1<p<+\infty$. Then, for any $y_0\in W^{2,p}(\Omega)\cap W^{1,p}_0(\Omega)$ and $h\in L^p((0,T)\times \Omega)$,
	\begin{enumerate}
	\item there exists a unique solution $y\in W^{1,2}_p((0,T)\times \Omega)$ and the following estimate holds:
	\begin{equation}\label{estimate_anisotr_Sobolev_norm}
    \|y\|_{W^{1,2}_p}\leq C\left[\|y_0\|_{W^{2,p}}+\|h\|_{L^p}\right];
	\end{equation}
	\item if $p>n+2$, $y\in C^{0,\gamma}([0,T];C^{1,\gamma}(\overline{\Omega}))$ for some $\gamma \in (0,1)$ and:
	\begin{equation}\label{estimate_holder_norm}
	\|y\|_{C^{0,\gamma}([0,T];C^{1,\gamma}(\overline{\Omega}))}\leq C\left[\|y_0\|_{W^{2,p}}+\|h\|_{L^p}\right].
	\end{equation}
	\end{enumerate}
	The constants $C$ and $\gamma$ depend only on $\Omega$, $A$, $b$, $\|c\|_{L^{\infty}}$ and $T$.
	\end{theorem}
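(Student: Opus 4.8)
The plan is to reduce the problem to the classical $L^p$ maximal regularity theory for second-order parabolic operators in non-divergence form. The first observation is that, since $A\in W^{1,\infty}((0,T)\times\Omega;\mathbb{R}^{n\times n})$, the divergence-form principal part may be expanded as
\begin{equation*}
\mbox{div}(A\nabla y)=\sum_{i,j}a_{ij}\partial_{ij}y+\sum_{i,j}(\partial_i a_{ij})\partial_j y,
\end{equation*}
so that \eqref{linear_boundary_6} becomes a non-divergence-form equation $y_t-\sum_{i,j}a_{ij}\partial_{ij}y+\tilde b\cdot\nabla y+cy=h$ whose principal coefficients $a_{ij}$ are uniformly continuous (indeed Lipschitz) and uniformly elliptic, while the new first-order coefficients $\tilde b=b-\mbox{div}(A)$ remain in $L^{\infty}$. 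This is exactly the structure required by the classical theory, so no derivative of $A$ beyond $W^{1,\infty}$ is lost.

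First I would establish part (1). The heart of the matter is the a priori estimate for the principal part $y_t-\sum a_{ij}\partial_{ij}y=g$ with homogeneous Dirichlet data, i.e. the parabolic Calder\'on--Zygmund estimate. One obtains it by freezing the coefficients at a point, invoking the explicit singular-integral bound for the constant-coefficient (heat-type) operator through its fundamental solution, and patching via a partition of unity together with the uniform continuity of the $a_{ij}$ to absorb the commutator error terms (see Ladyzhenskaya--Solonnikov--Ural'tseva or Lieberman); combined with the continuity method this yields existence and uniqueness. The lower-order contributions $\tilde b\cdot\nabla y+cy$ are then treated as a perturbation: using the interpolation inequality $\|\nabla y\|_{L^p}\le\epsilon\|D^2 y\|_{L^p}+C_\epsilon\|y\|_{L^p}$ and a Gronwall/absorption argument in time, one moves them to the right-hand side and closes the estimate, giving \eqref{estimate_anisotr_Sobolev_norm}. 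As for the initial datum, maximal regularity naturally measures $y_0$ in the real-interpolation trace space $B^{2-2/p}_{p,p}(\Omega)$; since $W^{2,p}(\Omega)\hookrightarrow B^{2-2/p}_{p,p}(\Omega)$ continuously for $1<p<\infty$, replacing the trace norm by $\|y_0\|_{W^{2,p}}$ produces the valid (if non-sharp) bound stated.

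For part (2), I would invoke the anisotropic Sobolev embedding for the parabolic space $W^{1,2}_p$. When $p>n+2$, the parabolic scaling dimension $n+2$ is strictly below $p$, so $W^{1,2}_p((0,T)\times\Omega)$ embeds continuously into a parabolic H\"older space and, more precisely, into $C^{0,\gamma}([0,T];C^{1,\gamma}(\overline{\Omega}))$ with $\gamma\in(0,1)$ controlled by $1-(n+2)/p$ (choosing the smaller of the spatial and temporal H\"older exponents). Composing this embedding with \eqref{estimate_anisotr_Sobolev_norm} yields \eqref{estimate_holder_norm}.

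The main obstacle is the variable-coefficient Calder\'on--Zygmund estimate underlying part (1). For the constant-coefficient heat operator it reduces to classical boundedness of singular integrals associated with the Gaussian kernel; the genuine difficulty is the passage to merely continuous principal coefficients \emph{near the boundary}, where the freezing-of-coefficients localisation must be combined with the $C^2$-regularity of $\partial\Omega$ to flatten the boundary and control the resulting error terms uniformly. Tracking the dependence of $C$ and $\gamma$ on $\Omega$, $A$, $b$, $\|c\|_{L^{\infty}}$ and $T$, as the statement demands, is then routine but is exactly where all the care resides.
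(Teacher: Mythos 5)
Your proposal is correct and takes essentially the same route as the paper: the paper likewise expands $\mbox{div}(A\nabla y)$ using $A\in W^{1,\infty}$ to pass to non-divergence form with Lipschitz principal and bounded lower-order coefficients, then cites the classical maximal regularity theorem (Lieberman, Thm.\ 7.32; Ladyzhenskaya--Solonnikov--Ural'tseva, Thm.\ 9.1) for part (1), and for part (2) invokes exactly the anisotropic embedding $W^{1,2}_p((0,T)\times\Omega)\hookrightarrow C^{0,\gamma}([0,T];C^{1,\gamma}(\overline{\Omega}))$ valid for $p>n+2$. The only difference is that you sketch the internals of the cited results (freezing of coefficients, Calder\'on--Zygmund estimates, Besov trace spaces), which the paper delegates entirely to the references.
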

	The proof of the first part of the Theorem can be found in \cite[Theorem 7.32 page 182]{lieberman1996second}. In this reference the considered parabolic operator is in a non divergence form. The Lipschitz assumption on the diffusion $A$ we impose suffices to transform the operator under consideration from the divergence  to the non divergence form,  keeping the Lipschitz continuity on the diffusion matrix and the boundedness on the other coefficients. For the proof of the first part, see also \cite[Theorem 9.1 page 341]{PEL} and \cite[Theorem 9.2.5 page 275]{wu2006elliptic}. 
	
	The second part is due to the existence of the continuous embedding:
	\begin{equation*}
	i:W^{1,2}_p((0,T)\times \Omega)\hookrightarrow C^{0,\gamma}([0,T];C^{1,\gamma}(\overline{\Omega})),
	\end{equation*}
	provided that $p>n+2$ (see \cite[Lemma 3.3 page 81]{PEL} or \cite{Simon1986}).
	
	Finally, assuming further $b$ Lipschitz continuous, we can get the same regularity result for the adjoint problem:
	\begin{equation}\label{adjoint_problem_6}
	\begin{cases}
	-\varphi_t-\mbox{div}(A \nabla \varphi) -\mbox{div}( \varphi b)+c\varphi=h\hspace{0.6 cm} & \mbox{in} \hspace{0.10 cm}(0,T)\times \Omega\\
	\varphi=0  & \mbox{on}\hspace{0.10 cm} (0,T)\times \partial \Omega\\
	\varphi(T,x)=\varphi_T(x).  & \mbox{in} \hspace{0.10 cm}\Omega\\
	\end{cases}
	\end{equation}
	\begin{corollary}\label{cor1}
		We suppose the same hypotheses of Theorem \ref{th5} and we assume further that the drift term $b\in W^{1,\infty}((0,T)\times \Omega)$. Then, the same conclusions of Theorem \ref{th5} hold for the adjoint problem \eqref{adjoint_problem_6}.
	\end{corollary}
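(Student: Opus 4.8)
The plan is to reduce the backward adjoint problem \eqref{adjoint_problem_6} to the forward problem \eqref{linear_boundary_6} already handled in Theorem \ref{th5}, by means of two transformations: a reversal of time and an integration by parts of the divergence-form drift term. Once the adjoint equation is written in the form covered by Theorem \ref{th5}, both conclusions (the $W^{1,2}_p$ estimate and, for $p>n+2$, the H\"older estimate) transfer immediately.

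First I would reverse time. Setting $\psi(t,x)=\varphi(T-t,x)$, one has $\psi_t(t,x)=-\varphi_t(T-t,x)$, so that $\psi$ solves a \emph{forward} equation on $(0,T)\times\Omega$ with the same homogeneous Dirichlet boundary condition, initial datum $\psi(0,\cdot)=\varphi_T$, and coefficients $\tilde{A}(t,x)=A(T-t,x)$, $\tilde{b}(t,x)=b(T-t,x)$, $\tilde{c}(t,x)=c(T-t,x)$, right-hand side $\tilde{h}(t,x)=h(T-t,x)$. Time reversal is an isometry between the relevant anisotropic Sobolev and H\"older spaces and preserves the $W^{1,\infty}$ and $L^{\infty}$ regularity of the coefficients, so no information is lost at this stage.

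The second and decisive step is to rewrite the divergence-form drift. Using the product rule
\begin{equation*}
\mbox{div}(\psi\,\tilde{b})=\tilde{b}\cdot\nabla\psi+\psi\,\mbox{div}(\tilde{b}),
\end{equation*}
the equation for $\psi$ takes the form
\begin{equation*}
\psi_t-\mbox{div}(\tilde{A}\nabla\psi)+\hat{b}\cdot\nabla\psi+\hat{c}\,\psi=\tilde{h},
\end{equation*}
with $\hat{b}=-\tilde{b}$ and $\hat{c}=\tilde{c}-\mbox{div}(\tilde{b})$. This is exactly the structure of \eqref{linear_boundary_6}. The point where the extra hypothesis enters --- and the main obstacle one has to clear --- is precisely here: the transformed zeroth-order coefficient $\hat{c}$ contains $\mbox{div}(\tilde{b})$, which belongs to $L^{\infty}$ if and only if $\tilde{b}\in W^{1,\infty}$. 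Under the mere $L^{\infty}$ bound on $b$ assumed for the forward problem, $\hat{c}$ need not be bounded and Theorem \ref{th5} could not be invoked. The assumption $b\in W^{1,\infty}((0,T)\times\Omega)$ in the Corollary is designed exactly to guarantee $\hat{c}\in L^{\infty}$.

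Finally, I would verify that the new coefficients satisfy the hypotheses of Theorem \ref{th5}: $\tilde{A}\in W^{1,\infty}$ and uniformly coercive (both inherited from $A$), $\hat{b}\in L^{\infty}$, and $\hat{c}\in L^{\infty}$ with $\|\hat{c}\|_{L^{\infty}}\leq\|c\|_{L^{\infty}}+\|\mbox{div}(b)\|_{L^{\infty}}$. Applying Theorem \ref{th5} to $\psi$ yields $\psi\in W^{1,2}_p((0,T)\times\Omega)$ together with \eqref{estimate_anisotr_Sobolev_norm}, and, for $p>n+2$, $\psi\in C^{0,\gamma}([0,T];C^{1,\gamma}(\overline{\Omega}))$ together with \eqref{estimate_holder_norm}. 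Undoing the time reversal, $\varphi(t,x)=\psi(T-t,x)$ inherits the same regularity and the same estimates, with constants depending only on $\Omega$, $A$, $b$, $\|c\|_{L^{\infty}}$ and $T$, which is the assertion of the Corollary.
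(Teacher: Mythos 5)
Your proof is correct and follows exactly the paper's own argument: time inversion $t\to T-t$ followed by expanding $-\mbox{div}(\varphi b)=-b\cdot\nabla\varphi-\mbox{div}(b)\varphi$, with the $W^{1,\infty}$ hypothesis on $b$ ensuring the new zeroth-order coefficient is bounded so that Theorem \ref{th5} applies. You have simply spelled out in more detail what the paper states in two sentences, including the correct identification of the obstacle that the extra hypothesis removes.
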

	This Corollary can be proved employing the time inversion $t\rightarrow T-t$ and applying the above Theorem. Expanding $-div(\varphi b)=-b\hspace{0.036 cm}\cdot\nabla \varphi-\mbox{div}(b)\varphi$, one realizes that the Lipschitz condition on $b$ guarantees the boundedness of the coefficient $-\mbox{div}(b)$.

\subsection{Well-posedness of the state equation.}
Let us define  the notion of (weak) solution of \eqref{semilinear_boundary_1}. 

First of all, we introduce the class of test functions:
$$\mathscr{T}\coloneqq \left\{\varphi\in C^{\infty}([0,T]\times \overline{\Omega} ) \hspace{0.2 cm} \mbox{such that}\right.$$
$$\left.\varphi(T,x)=0\quad \forall x\in \Omega \qquad\mbox{and}\qquad \varphi(t,x)=0\quad\forall (t,x)\in [0,T]\times \partial\Omega\right\}.$$
\begin{definition}\label{def_solution_boundary_semilinear_problem}
	Let $y_0\in L^{\infty}(\Omega)$ be the initial datum and $u\in L^{\infty}((0,T)\times\Gamma)$ be the boundary control. Then, $y\in L^{\infty}((0,T)\times\Omega)\cap C^0([0,T];H^{-1}(\Omega))$ is said to be a solution to \eqref{semilinear_boundary_1} if for any test function $\varphi \in \mathscr{T}$:
	\begin{equation*}
	\int_{0}^{T} \int_{\Omega}\left[-\varphi_t-\mbox{div}(A \nabla \varphi)-\mbox{div}( \varphi b) \right]y \ dx dt+\int_{0}^{T} \int_{\Omega}f(t,x,y)\varphi \ dx dt=
	\end{equation*}
	$$=\int_{\Omega}\varphi(x,0)y_0(x) \ dx-\int_{0}^T\int_{\Gamma}u\frac{\partial\varphi}{\partial n} \ d\sigma(x) dt.$$
	In the above equation, $n=A \hat{v}/\|A \hat{v}\|$, where $\hat{v}$ is the outward unit normal to $\Gamma$.
	% Note that
	%\overline{\left\{-\varphi_t-\mbox{div}(A \nabla \varphi)-\mbox{div}(b\cdot \varphi) \right \ | \ \right.
	%.\left \varphi\in \mathscr{T}\right\}}=L^{2}((0,T)\times\Omega).
	%Indeed, see Remark(2) notes 2017.
\end{definition}

The following local existence and uniqueness result holds.
\begin{proposition}\label{prop0}
	Let $R>0$. Then, there exists $T_R>0$ such that for each time horizon $T\in (0,T_R)$ and for any initial datum $y_0\in L^{\infty}(\Omega)$ and boundary control $u\in L^{\infty}((0,T)\times\Gamma)$ fulfilling the smallness conditions:
	\begin{equation}\label{prop0_smallness_condition}
	\|y_0\|_{L^{\infty}}\leq R,\quad\|u\|_{L^{\infty}}\leq R,
	\end{equation}
	problem \eqref{semilinear_boundary_1} admits an unique solution $y\in L^{\infty}((0,T)\times\Omega)\cap C^0([0,T];H^{-1}(\Omega))$. Furthermore,
	\begin{equation}\label{l_inf_est_sol}
	\|y\|_{L^{\infty}}\leq C_R[\|y_0\|_{L^{\infty}}+\|u\|_{L^{\infty}}],
	\end{equation}
	the constant $C_R$ depending only on $R$.
%	Take $\widehat{T}>0$ and $M>0$.
	 Furthermore, both $T_R$ and $C_R$ can be chosen uniformly over the nonlinearities:
	 \begin{equation}\label{set_nonlinearities}
	 \tilde{f}_{\overline{y}}(t,x,y)=f(t,x,y+\overline{y}(t,x))-f(t,x,\overline{y}(t,x)),
	 \end{equation}
	 where $f\in C^1$, $\overline{y}\in L^{\infty}$
%	\overline{y}\in L^{\infty}((0,\widehat{T});L^{\infty}(\Omega))<
	 and $\|\overline{y}\|_{L^{\infty}}\leq R$.
\end{proposition}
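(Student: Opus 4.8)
Proposition \ref{prop0} is a local well-posedness statement for the semilinear boundary value problem, and the natural approach is a contraction mapping (fixed point) argument built on the linear theory and the boundary lifting.

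\textbf{Reduction to a homogeneous-boundary problem.} The plan is to first remove the boundary data. I would let $z$ solve the linear boundary problem
\begin{equation*}
\begin{cases}
z_t-\mbox{div}(A\nabla z)+b\cdot\nabla z=0 & \mbox{in}\ (0,T)\times\Omega,\\
z=u\mathbf{1}_{\Gamma} & \mbox{on}\ (0,T)\times\partial\Omega,\\
z(0,\cdot)=0 & \mbox{in}\ \Omega,
\end{cases}
\end{equation*}
obtained by transposition (as for the elliptic problem \eqref{linear_boundary_elliptic_1}), together with a lifting of the initial datum $y_0$. By the linear estimates one has $\|z\|_{L^{\infty}}\leq C_0\|u\|_{L^{\infty}}$ on a fixed short interval. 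Writing $y=z+w$, the remaining unknown $w$ satisfies a Dirichlet problem with homogeneous boundary data, zero-ish boundary trace, and source term $f(t,x,w+z)$. The point of this splitting is that the nonlinearity is now measured against a function $w$ that lives in a space where the $L^{\infty}$ smoothing of the heat semigroup applies.

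\textbf{The contraction.} On the complete metric space $X=\{w\in L^{\infty}((0,T)\times\Omega):\|w\|_{L^{\infty}}\leq M\}$, with $M$ comparable to $R$, I would define the map $\Phi$ sending $w$ to the solution of the linear Dirichlet problem with source $-f(t,x,w+z)$ and initial datum (the lift of) $y_0$. Using the parabolic regularity and maximum principle (Theorem \ref{th5}, Lemma \ref{lemma6}-type bounds), together with $\|u\|_{L^{\infty}},\|y_0\|_{L^{\infty}}\leq R$, one shows $\Phi$ maps $X$ into itself for $T$ small: the key is that $f$ is $C^1$, hence Lipschitz on the bounded set $\{|s|\leq M+C_0R\}$ with a constant $L_R$ depending only on $R$, so $\|f(\cdot,\cdot,w+z)\|_{L^{\infty}}\leq |f(\cdot,\cdot,0)|_{L^\infty}+L_R(M+C_0R)$, and the linear solution operator picks up a factor $\sim T$ (or $T^{1-\frac{n}{4}}$ via the smoothing estimate) that can be made small. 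The same local Lipschitz bound on $f$ gives
\begin{equation*}
\|\Phi(w_1)-\Phi(w_2)\|_{L^{\infty}}\leq C(R)\,T^{\alpha}\,L_R\,\|w_1-w_2\|_{L^{\infty}}
\end{equation*}
for some $\alpha>0$, so that for $T<T_R$ small enough $\Phi$ is a strict contraction, yielding a unique fixed point and hence the unique solution $y=z+w$ satisfying \eqref{l_inf_est_sol}.

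\textbf{Uniformity over the shifted nonlinearities and the main obstacle.} For the final claim I must check that $T_R$ and $C_R$ can be chosen independently of the particular $\tilde f_{\overline y}$ in \eqref{set_nonlinearities}. The crucial observation is that all the constants entering the argument depend on $f$ only through its local Lipschitz constant and the size of $f(\cdot,\cdot,0)$ on the relevant bounded range. For $\tilde f_{\overline y}(t,x,s)=f(t,x,s+\overline y)-f(t,x,\overline y)$ with $\|\overline y\|_{L^{\infty}}\leq R$, one has $\tilde f_{\overline y}(t,x,0)=0$ and, by the mean value theorem, the Lipschitz constant of $s\mapsto\tilde f_{\overline y}(t,x,s)$ on $\{|s|\leq M\}$ is bounded by $\sup\{|\partial_y f(t,x,\sigma)|:|\sigma|\leq M+R\}$, a quantity controlled uniformly in $\overline y$ once $\|\overline y\|_{L^{\infty}}\leq R$. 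Feeding this uniform Lipschitz bound into the contraction estimate shows $T_R$ and $C_R$ may be taken uniform over the whole family. I expect the main technical obstacle to be the boundary-trace/transposition estimate for $z$ in the $L^{\infty}$ norm on $(0,T)\times\Omega$ with a constant that is harmless as $T\to 0$: controlling the inhomogeneous Dirichlet datum in $L^{\infty}$ (rather than a weaker negative-order space) is the delicate point, and is exactly where the $C^2$ regularity of $\partial\Omega$ and the smoothing estimate of Lemma \ref{lemma6} must be invoked carefully to close the fixed-point scheme.
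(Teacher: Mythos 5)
Your proposal is correct and follows essentially the same route as the paper: split $y=z+w$ with $z$ the linear transposition solution lifting the boundary data, then solve for $w$ by a Banach fixed point in an $L^{\infty}$-ball over a short time interval, the contraction constant being $O(T)$ times the local Lipschitz constant of $f$, with uniformity over the shifted nonlinearities $\tilde f_{\overline{y}}$ coming from the uniform local Lipschitz bound when $\|\overline{y}\|_{L^{\infty}}\leq R$. The only cosmetic differences are that the paper places $y_0$ in the linear lifting $z$ rather than in the fixed-point problem, and it dispatches the $L^{\infty}$ bound on $z$ via the maximum principle after the standard $e^{-\lambda t}$ rescaling, so the transposition-as-$T\to 0$ issue you flag as the main obstacle is not actually delicate.
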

%[ADD] proof in the Appendix.
%Moreover, there exists a compact $K_R\subset \mathbb{R}$ such that $C_R=C_R(\Omega,A,b,L_R,R,\|f(\cdot,\cdot,0)\|_{L^{\infty}})$,
%where $L_R$ is such that:
%\begin{equation}
%|f(t,x,y_2)-f(t,x,y_1)|\leq L_R |y_2-y_1|,\quad \forall (t,x,y_1,y_2)\in\mathbb{R}^+\times\overline{\Omega}\times K_R^2.
%\end{equation}
The uniqueness for \eqref{semilinear_boundary_1} can be proved by energy estimates. 

Existence can be addressed  splitting the solution $y=z+w$, where $z$ is the solution to:
\begin{equation}\label{linear_boundary_4}
\begin{cases}
z_t-\mbox{div}(A \nabla z) +b\cdot\nabla z=0\hspace{3.106 cm} & \mbox{in} \hspace{0.10 cm}(0,T)\times \Omega\\
z=u\mathbf{1}_{\Gamma}  & \mbox{on}\hspace{0.10 cm} (0,T)\times \partial \Omega\\
z(0,x)=y_0(x),  & \mbox{in} \hspace{0.10 cm}\Omega\\
\end{cases}
\end{equation}
and $w$ solves:
\begin{equation}\label{semilinear_boundary_6}
\begin{cases}
w_t-\mbox{div}(A \nabla w) +b\cdot\nabla w+f(t,x,w+z)=0\hspace{0.6 cm} & \mbox{in} \hspace{0.10 cm}(0,T)\times \Omega\\
w=0  & \mbox{on}\hspace{0.10 cm} (0,T)\times \partial \Omega\\
w(0,x)=0.  & \mbox{in} \hspace{0.10 cm}\Omega\\
\end{cases}
\end{equation}
The global existence for \eqref{linear_boundary_4} holds by transposition by adapting \cite[page 202]{LIO} to the present case, while the local existence for \eqref{semilinear_boundary_6} can be proved by fixed point as follows.\\
First of all, by the transformation $\hat{w}=e^{-\lambda t}w$,  the linear part of the operator can be made to be dissipative. Then, for any $\eta\in L^{\infty}((0,T)\times\Omega)$, we consider $\phi(\eta)$ the unique solution to:
\begin{equation}\label{semilinear_boundary_7}
\begin{cases}
w_t-\mbox{div}(A \nabla w) +b\cdot\nabla w+\lambda w+f(t,x,\eta+z)=0\hspace{0.6 cm} & \mbox{in} \hspace{0.10 cm}(0,T)\times \Omega\\
w=0  & \mbox{on}\hspace{0.10 cm} (0,T)\times \partial \Omega\\
w(0,x)=0.  & \mbox{in} \hspace{0.10 cm}\Omega\\
\end{cases}
\end{equation}
This actually defines the map:
\begin{equation*}
\phi:\overline{B^{L^{\infty}}(0,2R)}\longrightarrow \overline{B^{L^{\infty}}(0,2R)}; \quad \eta\longmapsto \phi(\eta),
\end{equation*}
where $\overline{B^{L^{\infty}}(0,2R)}$ stands for the closed ball of radius $2R$ centered at $0$ in $L^{\infty}$.

For the sake of simplicity, let us consider the case where the coefficients of the linear part do not depend on time. In this case, $\phi(\eta)$ can be represented by the variation of constants formula as:
\begin{equation}\label{Duhamel}
\phi(\eta)=\int_{0}^{t} S(t-s)f(s,x,\eta+z)ds,
\end{equation}
where $\left\{S(t)\right\}$ is the semigroup associated to the linear part of our system. Then, by choosing $T$ small enough, $\phi$ can be shown to be contractive in $\overline{B^{L^{\infty}}(0,2R)}$. We conclude applying  the Banach fixed point Theorem.

This argument can be applied uniformly over the set of nonlinearities \eqref{set_nonlinearities} since the above fixed point argument can be accomplished uniformly, while $z$ is independent of the nonlinearity.

\subsection{Local controllability}
In this section we prove the local controllability of the semilinear system. We will proceed as follows:
\begin{itemize}
\item proof of global controllability of \eqref{semilinear_boundary_1} by controls in $L^{\infty}$, under global Lipschitz assumptions on the nonlinearity. We employ an extension-restriction technique;
\item proof of the local controllability of \eqref{semilinear_boundary_1} in the general case (Lemma \ref{lemma2}).
\end{itemize}
We make use of an extension-restriction argument so to avoid some technical difficulties that arise when dealing directly with the boundary control problem. 

It is worth noticing that, by classical extension results (see Whitney extension Theorem and \cite[Theorem 1 page 268]{PDE}), we can suppose the coefficients $A\in W^{1,\infty}((0,T)\times\mathbb{R}^n;\mathbb{R}^{n\times n})$,
% $\forall \delta \in (0,+\infty)$,
% we define $\Omega_{\delta}=\left\{x\in\mathbb{R}^n \ | \ \mbox{dist}(x,\Omega)<\delta\right\}.
% \exists, \delta \in (0,+\infty) such that \forall x\in \Omega_{\delta}, \forall t\in [0,T]
% \forall \xi\in \mathbb{R}^n, \xi^T A(t,x) \xi\geq \frac{\alpha}{2}\|\xi\|^2.
 $b\in W^{1,\infty}(\mathbb{R}^+\times\mathbb{R}^n;\mathbb{R}^n)$ and the nonlinearity $f\in C^1(\mathbb{R}^+\times \mathbb{R}^n\times\mathbb{R})$.
%We define:
%\begin{itemize}
%	\item $\widehat{A}_1\in W^{1,\infty}((-T,2T)\times \mathbb{R}^n)$ by \cite[Theorem 1 page 268]{PDE};
%then $\widehat{A}=\widehat{A}_1\chi_{(0,T)\times\mathbb{R}^n}\in W^{1,\infty}((0,T)\times\mathbb{R}^n;\mathbb{R}^n)$.
%	\item $\widehat{b}=b\chi_{(0,T)\times \Omega}\in L^{\infty}((0,T)\times \mathbb{R}^n)$;
%	\item $\widehat{f}\in C^1(\mathbb{R}^+\times \mathbb{R}^n;\mathbb{R})$ by Whitney extension Theorem
%	https://en.wikipedia.org/wiki/Whitney_extension_theorem.
%\end{itemize}
 Combining existing results of interior controllability of \eqref{semilinear_boundary_1} (see \cite{ICN}) and an extension-restriction argument one can prove a global controllability result for \eqref{semilinear_boundary_1} with globally Lipschitz nonlinearity and general data in $L^2$.
\begin{lemma}\label{lemma1}
	Suppose that $f=f(t,x,y)$ is globally Lipschitz in $y$ uniformly in $(t,x)$, i.e.:
	\begin{equation}\label{lemma1_eq1}
	|f(t,x,y_2)-f(t,x,y_1)|\leq L|y_2-y_1|,\quad\forall \ (t,x,y_1,y_2)\in \mathbb{R}^+\times\mathbb{R}^n\times \mathbb{R}^2.
	\end{equation}
	Let $T>0$. Then, we consider a target trajectory $\overline{y}$ solution to \eqref{semilinear_boundary_1} with initial datum $\overline{y}_0\in L^{2}$ and control $\overline{u}\in L^{\infty}$. Finally, we take an initial datum $y_0\in L^2$. 
	
	Then, there exists a control $u\in L^{\infty}((0,T)\times\Gamma)$ such that the unique solution $y$ to \eqref{semilinear_boundary_1} with initial datum $y_0$ and control $u$ verifies:
	\begin{equation*}
	y(T,\cdot)=\overline{y}(T,\cdot),\quad\mbox{in}\ \Omega.
	\end{equation*}
	Furthermore,
	\begin{equation}\label{lemma1_eq0}
	\|u-\overline{u}\|_{L^{\infty}}\leq C\|y_0-\overline{y}_0\|_{L^2},
	\end{equation}
	the constant $C$ being independent of $y_0$ and $\overline{y}$.
\end{lemma}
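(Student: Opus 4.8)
The plan is to reduce the controllability-to-trajectories statement to a null controllability problem for the difference, and then to convert the boundary control into an interior control by an extension--restriction argument, invoking the known interior null controllability of globally Lipschitz semilinear heat equations from \cite{ICN}. First I would set $\eta=y-\overline{y}$ and $v=u-\overline{u}$. Subtracting the equation for $\overline{y}$ from that for $y$, the difference $\eta$ solves a problem of the form \eqref{semilinear_boundary_1} with the shifted nonlinearity $\tilde{f}_{\overline{y}}(t,x,\eta)=f(t,x,\eta+\overline{y}(t,x))-f(t,x,\overline{y}(t,x))$ as in \eqref{set_nonlinearities}, boundary datum $v\mathbf{1}_{\Gamma}$ and initial datum $\eta_0=y_0-\overline{y}_0\in L^{2}(\Omega)$. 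Since $f$ is globally Lipschitz in $y$ with constant $L$, so is $\tilde{f}_{\overline{y}}$, \emph{with the same constant $L$ and uniformly in $\overline{y}$}, and moreover $\tilde{f}_{\overline{y}}(t,x,0)=0$. The task becomes to steer $\eta$ from $\eta_0$ to $0$ at time $T$ by a control $v\in L^{\infty}((0,T)\times\Gamma)$ with $\|v\|_{L^{\infty}}\le C\|\eta_0\|_{L^{2}}$, where $C$ is independent of $\overline{y}$; this uniformity is precisely what yields the final claim that $C$ does not depend on the target.

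Second, to upgrade the merely $L^{2}$ datum to an $L^{\infty}$ one, I would not control on the whole of $[0,T]$. On $[0,T/2]$ I take $v\equiv 0$ (that is, $u=\overline{u}$). Because $\tilde{f}_{\overline{y}}(t,x,0)=0$ and $\tilde{f}_{\overline{y}}$ is globally Lipschitz, the solution is global, and the $L^{2}$--$L^{\infty}$ smoothing effect of the equation (via the comparison and Moser-type arguments used for Lemma \ref{lemma6}) gives $\eta(T/2,\cdot)\in L^{\infty}(\Omega)$ with $\|\eta(T/2,\cdot)\|_{L^{\infty}}\le C\|\eta_0\|_{L^{2}}$, the constant being uniform over the family \eqref{set_nonlinearities}. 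It then suffices to carry out the null control on $[T/2,T]$ starting from the bounded datum $\eta(T/2,\cdot)$, producing an $L^{\infty}$ control whose size is controlled by $\|\eta(T/2,\cdot)\|_{L^{\infty}}$, hence by $\|\eta_0\|_{L^{2}}$.

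Third comes the extension--restriction step, carried out on $[T/2,T]$. I would enlarge $\Omega$ by gluing a smooth piece $\mathcal{O}$ across $\Gamma$, obtaining a bounded smooth domain $\tilde{\Omega}$ in which $\Gamma$ is \emph{interior} while $\partial\Omega\setminus\Gamma$ remains part of $\partial\tilde{\Omega}$, and I fix a nonempty open control set $\omega\Subset\mathcal{O}$. The coefficients $A,b$ and the nonlinearity $\tilde{f}_{\overline{y}}$ are extended to $\tilde{\Omega}$ keeping the same Lipschitz constant $L$ and the property of vanishing at $\eta=0$, and $\eta(T/2,\cdot)$ is extended by $0$. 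Interior null controllability for globally Lipschitz semilinear heat equations (\cite{ICN}) then provides a control $h$ supported in $(T/2,T)\times\omega$ steering the extended state $\tilde{\eta}$ to $0$ at time $T$, with $\|h\|_{L^{\infty}}\le C\|\eta(T/2,\cdot)\|_{L^{\infty}}$ for a $C$ depending only on $\Omega$, $A$, $b$, $L$ and $T$, in particular not on $\overline{y}$. This uniformity in $\overline{y}$ stems from the standard linearization-plus-fixed-point reduction, in which the relevant observability constant for the linearized problems depends only on an $L^{\infty}$ bound of the potential, here at most $L$.

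Finally, I restrict the controlled solution $\tilde{\eta}$ to $\Omega$ and define $v$ on $(T/2,T)\times\Gamma$ to be its trace there; glued with $v\equiv 0$ on $(0,T/2)$ this yields $u=\overline{u}+v\in L^{\infty}$, and by construction $\eta(T,\cdot)=0$, i.e. $y(T,\cdot)=\overline{y}(T,\cdot)$. Since $\Gamma$ and $\omega$ are disjoint subsets of the interior of $\tilde{\Omega}$, interior parabolic regularity (Theorem \ref{th5}) makes $\tilde{\eta}$ smooth in a neighborhood of $\Gamma$ for $t>T/2$, so its trace lies in $L^{\infty}$ and $\|v\|_{L^{\infty}}\le C\|\eta(T/2,\cdot)\|_{L^{\infty}}\le C\|\eta_0\|_{L^{2}}$, which is \eqref{lemma1_eq0}. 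The main obstacle I anticipate is exactly this last chain of estimates: producing an $L^{\infty}$ (not merely $L^{2}$) control whose cost is measured by the $L^{2}$ norm of the initial difference, and securing the trace regularity on $\Gamma$ near its relative boundary in $\partial\Omega$, all \emph{uniformly} over the shifted nonlinearities $\tilde{f}_{\overline{y}}$. The $L^{2}$ null controllability itself is classical; the delicate points are the $L^{\infty}$ upgrade through smoothing and the uniform-in-$\overline{y}$ control cost.
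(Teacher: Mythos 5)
Your proposal is correct and follows essentially the same route as the paper's own proof: reduction to null controllability of the difference with the shifted nonlinearity, an initial passive phase with $u=\overline{u}$ to regularize the $L^2$ datum via parabolic smoothing, extension of the domain across $\Gamma$ with an interior control region $\omega$ placed outside $\overline{\Omega}$, application of the interior controllability result of \cite{ICN}, and restriction of the controlled solution's trace on $\Gamma$, with the $L^{\infty}$ bound on the control secured by interior regularity away from $\omega$. The only differences are cosmetic (the paper uses an arbitrary $\tau\in(0,T)$ instead of $T/2$, and obtains $h$ only in $L^2$, recovering the $L^{\infty}$ trace bound from the $C^0$ estimate on the extended state).
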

\begin{proof}[Proof.]
	\textit{Step 1.} \ \textbf{Reduction to null controllability.}
Taking $\eta=y-\overline{y}$, the problem is reduced to prove the null controllability of the system:
	\begin{equation}\label{semilinear_boundary_lemma1_1}
		\begin{cases}
		\eta_t-\mbox{div}(A \nabla \eta) +b\cdot\nabla \eta+\tilde{f}(t,x,\eta)=0\hspace{0.6 cm} & \mbox{in} \hspace{0.10 cm}(0,T)\times \Omega\\
		\eta=v\mathbf{1}_{\Gamma}  & \mbox{on}\hspace{0.10 cm} (0,T)\times \partial \Omega\\
		\eta(0,x)=y_0-\overline{y}_0,  & \mbox{in} \hspace{0.10 cm}\Omega\\
		\end{cases}
		\end{equation}
		where $\tilde{f}(t,x,\eta)=f(t,x,\eta+\overline{y}(t,x))-f(t,x,\overline{y}(t,x))$.\\
	\textit{Step 2.} \ \textbf{Regularization of the initial datum.} Let $0<\tau<T$. We firstly let the system evolve with zero boundary control in $[0,\tau]$ to regularize the initial datum. Indeed, by Moser-type techniques (see, for instance, \cite[Theorem 1.7]{porretta2001local}, \cite{wu2006elliptic} or \cite{lieberman1996second}), the solution $\eta$ to \eqref{semilinear_boundary_lemma1_1} with null control in $[0,\tau]$ is such that $\eta_1=\eta(\tau,\cdot)\in C^0(\overline{\Omega})$ and $\eta_1(x)=0$ for any $x\in\partial\Omega$. Furthermore, we have the estimate:
	\begin{equation}\label{lemma1_eq-1}
	\|\eta_1\|_{C^0}\leq C\|y_0-\overline{y}_0\|_{L^2}.
	\end{equation}
	\textit{Step 3.} \ \textbf{Extension.} We extend our domain $\Omega$ around $\Gamma$ getting an extended domain $\widehat{\Omega}$ such that:
	\begin{itemize}
		\item $\Omega \subset \widehat{\Omega}$;
		\item $\partial\Omega\setminus \Gamma \subset \partial \widehat{\Omega}$;
		\item there exists a %open 
		ball $\omega$ such that $\overline{\omega}\subset \widehat{\Omega}\setminus \overline{\Omega}$;
		\item $\partial \widehat{\Omega} \in C^{2}$.
	\end{itemize}
	Then, we introduce $\widehat{\eta}_1=\eta_1\mathbf{1}_{\Omega}$ the extension by $0$ of the regularized initial datum.\\
	\textit{Step 4.} \ \textbf{Interior Controllabilty.}
	By \cite[Theorem 3.1]{ICN}, there exists a control $h\in L^2((\tau,T)\times \omega)$ such that the unique solution $\widehat{\eta}$ to:
	\begin{equation}\label{semilinear_boundary_lemma1_2}
	\begin{cases}
	\eta_t-\mbox{div}(A \nabla \eta) +b\cdot\nabla \eta+\tilde{f}(t,x,\eta)=h\mathbf{1}_{\omega}\hspace{0.6 cm} & \mbox{in} \hspace{0.10 cm}(\tau,T)\times \widehat{\Omega}\\
	\eta=0  & \mbox{on}\hspace{0.10 cm} (\tau,T)\times \partial \widehat{\Omega}\\
	\eta(\tau,x)=\widehat{\eta}_1  & \mbox{in} \hspace{0.10 cm}\widehat{\Omega}\\
	\end{cases}
	\end{equation}
	verifies the final condition $\widehat{\eta}(T,\cdot)=0$. Since $\overline{\omega}\subset \widehat{\Omega}\setminus \overline{\Omega}$, by the regularization effect of parabolic equations $\widehat{\eta}\in C^0([\tau,T]\times \overline{\Omega})$ and:
	\begin{equation}\label{lemma1_eq3}
	\|\widehat{\eta}\|_{C^0}\leq C[\|h\|_{L^2}+\|\widehat{\eta}_1\|_{C^0}]\leq C\|y_0-\overline{y}_0\|_{L^2}.
	\end{equation}
	% At this point \Omega. NOT \widehat{\Omega}!!!
	%and
	%% $$\|\hat{\eta}\|_{C^0([0,T]\times \overline{\Omega})}\leq
	%% C\|\hat{\eta}_0\|_{C^0(\widehat\Omega)}\leC\|\eta_0\|_{L^2(\Omega)}.$$
	%Where the last inequality is got by the first three steps in the commented part.
	\textit{Step 5.} \ \textbf{Restriction.}
	The boundary control
	\begin{equation*}
	v=\begin{cases}
	0 \quad &\mbox{in} \ (0,\tau)\\
	\widehat{\eta}\hspace{-0.1 cm}\restriction_{(\tau,T)\times\Gamma} \quad &\mbox{in} \ (T-\tau,T)
	\end{cases}
	\end{equation*}
	steers \eqref{semilinear_boundary_lemma1_1} from $y_0-\overline{y}_0$ to $0$. Hence, $u=v+\overline{u}$ drives \eqref{semilinear_boundary_1} from $y_0$ to $\overline{y}(T,\cdot)$. Finally, \eqref{lemma1_eq0} is a consequence of \eqref{lemma1_eq-1} and \eqref{lemma1_eq3}.

\end{proof}

Now, we are ready to prove the announced local controllability result (Lemma \ref{lemma2}).

\begin{proof}[Proof of Lemma \ref{lemma2}]
	\textit{Step 1} \ \textbf{Controllability of the truncated system}\\
	Let $M= 2R$. We introduce the cut-off function $\zeta \in C^{\infty}(\mathbb{R})$ such that:
	\begin{itemize}
		\item $supp(\zeta)\subseteq [-2M,2M]$;
		\item $\zeta\hspace{-0.1 cm}\restriction_{[-M,M]}\equiv 1$.
	\end{itemize}
	We are now ready to define the truncated nonlinearity $f_L(t,x,y)=f(t,x,\zeta(y)y)$. Note that $f_L$ is globally Lipschitz in $y$ uniformly in $(t,x)$, i.e.:
	\begin{equation}\label{lemma2_eq2}
	|f_L(t,x,y_2)-f_L(t,x,y_1)|\leq L|y_2-y_1|,\quad\forall \ (t,x,y_1,y_2)\in \mathbb{R}^+\times\mathbb{R}^n\times \mathbb{R}^2.
	\end{equation}
	Then, by Lemma \ref{lemma1}, we can find a control $u\in L^{\infty}((0,T)\times\Gamma)$ such that the unique solution $y_L$ to:
	\begin{equation}\label{semilinear_boundary_lipschitz}
	\begin{cases}
	y_t-\mbox{div}(A \nabla y) +b\cdot\nabla y+f_L(t,x,y)=0\hspace{0.6 cm} & \mbox{in} \hspace{0.10 cm}(0,T)\times \Omega\\
	y=u\mathbf{1}_{\Gamma}  & \mbox{on}\hspace{0.10 cm} (0,T)\times \partial \Omega\\
	y(0,x)=y_0(x),  & \mbox{in} \hspace{0.10 cm}\Omega\\
	\end{cases}
	\end{equation}
	verifies $y_L(T,\cdot)=\overline{y}(T,\cdot)$. Moreover, by \eqref{lemma1_eq0},
	\begin{equation}\label{lemma2_eq5}
	\|u-\overline{u}\|_{L^{\infty}((0,T)\times\Gamma)}\leq C\|y_0-\overline{y}_0\|_{L^{\infty}(\Omega)}.
	\end{equation}
	Therefore, choosing $\delta >0$ small enough, whenever $\|\overline{y}_0-y_0\|_{L^{\infty}}<\delta$, we have: \begin{equation}\label{lemma2_eq3}
	\|u-\overline{u}\|_{L^{\infty}}\leq R.
	\end{equation}
	\textit{Step 2}  \ \textbf{Conclusion with the original nonlinearity}\\
	The final target is a trajectory for the system. Hence, in the notation of Proposition \ref{prop0}, we can suppose $T<T_R$. Right now, let $y$ be the solution to \eqref{semilinear_boundary_1} with the original nonlinearity $f$, initial datum $y_0$ and control $u$. Proposition \ref{prop0} together with \eqref{smallness_condition}, \eqref{lemma2_eq3} and \eqref{lemma2_eq5} yields:
	\begin{equation*}
	\|y-\overline{y}\|_{L^{\infty}}\leq C_R\left[\|y_0-\overline{y}_0\|_{L^{\infty}}+\|u-\overline{u}\|_{L^{\infty}}\right]
	\end{equation*}
	\begin{equation*}
	\leq C_R\|y_0-\overline{y}_0\|_{L^{\infty}}\leq R,
	\end{equation*}
	taking $\delta$ small enough.
	Hence, $\|y-\overline{y}\|_{L^{\infty}}\leq R$.
	%Where $\delta$ is independent of trajectories $\overline{y}$ solution to \eqref{semilinear_boundary_1}
	%such that \|\overline{y}\|_{L^{\infty}}\leq R.
	%We can check this in the proof of Proposition \ref{prop0}.
	This in turn implies $\|y\|_{L^{\infty}}\leq 2R=M$. Finally, by the definition of $f_L$, we have $y=y_L$ thus finishing the proof.
\end{proof}

\providecommand{\href}[2]{#2}
\providecommand{\arxiv}[1]{\href{http://arxiv.org/abs/#1}{arXiv:#1}}
\providecommand{\url}[1]{\texttt{#1}}
\providecommand{\urlprefix}{URL }

\medskip
% The data information below will be filled by AIMS editorial staff
Received xxxx 20xx; revised xxxx 20xx.
\medskip

\end{document}